\newcommand{\R}{\mathbb{R}}
\newtheorem{corollary}{Corollary}
\begin{document}
\title[Forces in the $N$-Body Dielectric Spheres Problem]{A Linear Scaling in Accuracy Numerical Method for Computing the Electrostatic Forces in the $N$-Body \\Dielectric Spheres Problem}


 \author[M. Hassan and B. Stamm]{Muhammad Hassan\affil{1}\comma\corrauth,
       and Benjamin Stamm\affil{1}}
 \address{\affilnum{1}\ Center for Computational Engineering Science,
          Department of Mathematics, RWTH Aachen University, Schinkelstrasse 2, 52062 Aachen, Germany.}
 \emails{{\tt hassan@mathcces.rwth-aachen.de} (M.~Hassan),
          {\tt stamm@mathcces.rwth-aachen.de} (B.~Stamm)}

\begin{abstract}
This article deals with the efficient and accurate computation of the electrostatic forces between charged, spherical dielectric particles undergoing mutual polarisation. We use the spectral Galerkin boundary integral equation framework developed by Lindgren et al. (J. Comput. Phys. 371 (2018): 712-731) and subsequently analysed in two earlier contributions of the authors to propose a linear scaling in cost algorithm for the computation of the approximate forces. We establish exponential convergence of the method and derive error estimates for the approximate forces that do not explicitly depend on the number of dielectric particles $N$. Consequently, the proposed method requires only $\mathcal{O}(N)$ operations to compute the electrostatic forces acting on $N$ dielectric particles up to any given and fixed relative error.
\end{abstract}

\ams{65N12, 65N15, 65N35, 65R20}
\keywords{Boundary Integral Equations, Error Analysis, $N$-Body Problem, Linear Scaling, Polarisation, Forces.}

\maketitle

\section{Introduction}
Predicting the motion of a large number of objects interacting under the influence of a potential field, commonly known as the $N$-body problem, is one of the most well-known problems of classical physics. The $N$-body problem first arose due to the desire of astronomers to explain the motion of celestial objects interacting due to gravity \cite{newton1934principia,poincare} but the problem is also ubiquitous in physical phenomena involving large-scale electrostatic interactions. Thus, understanding the behaviour of charged colloidal particles (see, e.g., \cite{barros2014dielectric,dobnikar2002many,hynninen2006prediction,Extra1,merrill2009many,image1}) or the fabrication of binary nanoparticle superlattices and so-called Coulombic crystals (see, e.g., \cite{bartlett2005three,boles2015many,Crystal,kostiainen2013electrostatic,Self,ristenpart2003electrically,Lattice}), or the assembly of proteins and other cellular structures (see, e.g., \cite{PhysRevE.73.061902,ejtehadi2004three,patel2004charmm,sagui1999molecular,van2000many,yap2013calculating}) all require knowledge of Coulomb interactions between a large number of physical objects.

Many such electrostatic phenomena involve interactions between charged, spherical dielectric particles embedded in a dielectric medium, undergoing mutual polarisation. One is then typically interested in either the total electrostatic energy of the system or the electrostatic force acting on each particle, both of which can be derived from knowledge of the electric potential generated by these particles. Knowledge of the forces in particular is required if one wishes to perform molecular dynamics simulations or study assembly processes of charged particles (see, e.g., \cite{dahirel2007new,li2013self,prochazka2016self,zhang2010cluster} as well as the references on superlattices given above). In contrast to the much simpler case of point-charges however, a full description of the electric potential generated by such polarisable particles cannot be obtained as simply the sum of pairwise interactions. Instead, the potential is realised as the solution to a PDE, posed on the full three-dimensional space with interface conditions on the boundaries of the spherical particles (see, e.g., \cite{boles2015many,Hassan1,lindgren2018}). Since this PDE cannot generally be solved analytically, it becomes necessary to use some numerical method to first compute the \emph{approximate} electric potential and then use this to obtain \emph{approximations} to either the total energy or the force acting on each particle. It is therefore of great interest to develop efficient numerical algorithms that can yield approximations to the energy and the forces with theoretically quantifiable error estimates.

A number of different approaches to this so-called \emph{$N$-body dielectric spheres electrostatic interaction} problem have been proposed in the literature (see, e.g., \cite{barros2014efficient,multipole1,freed2014perturbative,multipole2,lotan2006,image1,image3,image2}). Unfortunately, many of these methods suffer from the handicap that they may become computationally prohibitive if the number of particles is very large. Additionally, these method have typically been formulated in a manner that makes them unsuitable for a systematic numerical analysis. As a consequence, it is usually not possible to theoretically evaluate the accuracy of these methods and, in particular, to explore the dependence of the accuracy on the number of dielectric particles $N$. These drawbacks are particularly regrettable since the quality of an $N$-body numerical method is assessed precisely by considering how the accuracy and computational cost of the algorithm scale with $N$. Indeed, {  given a family of geometrical configurations with varying number of dielectric spheres $N$}, using the terminology stated in \cite{Hassan2}:
\begin{itemize}
	\item We say that an $N$-body numerical method is \emph{$N$-error stable} if, for a fixed number of degrees of freedom per object, the relative or average error in the approximate solution for different geometrical configurations does not increase with $N$. 
	
	\item We say that an $N$-body numerical method is \emph{linear scaling in cost} if, given a geometrical configuration with $N$ spheres and for a fixed number of degrees of freedom per object, the numerical method requires $\mathcal{O}(N)$ operations to compute an approximate solution with a given and fixed tolerance. %
	
	\item Finally, we say that an $N$-body numerical method is \emph{linear scaling in accuracy} if it is both $N$-error stable and linear scaling in cost. 
\end{itemize}

Linear scaling in accuracy methods can be viewed as the gold-standard for $N$-body problems since these methods require only $\mathcal{O}(N)$ operations to compute an approximate solution with a given average error (the total error scaled by $N$) or relative error. Note that achieving the required linear scaling in \emph{cost} typically requires the use of fast summation methods such as tree codes (see, e.g., \cite{appel1985efficient,barnes1986hierarchical,boateng2013comparison,dehnen2000very,li2009cartesian}) including the so-called Fast Multipole method (see \cite{cheng1999fast,greengard1990numerical,greengard1}, or particle mesh and P3M methods (see, e.g., \cite{efstathiou1985numerical,hockney,knebe2001multi}). Additionally, it must be shown that the number of solver iterations required to obtain an approximate solution for different geometrical configurations does not grow with~$N$. 

E. Lindgren and coworkers recently proposed in \cite{lindgren2018}, a computational method based on a spectral Galerkin discretisation of a second-kind integral equation posed on the boundaries of the spherical dielectric particles. The boundary integral equation (BIE) was formulated in terms of the so-called induced surface charge on each spherical particle, which could be used to deduce physical quantities of interest. Indeed, through the use of the FMM, the method was empirically shown to achieve linear scaling in cost for the computation of the total electrostatic energy. The practical utility of this new algorithm was, for instance, demonstrated in the contributions \cite{Titan,lindgren2018dynamic}. Furthermore, in the articles \cite{Hassan1} and \cite{Hassan2}, the authors presented a complete numerical and complexity analysis of the method and rigorously established that, for families of geometrical configurations satisfying appropriate assumptions, the method is linear scaling in accuracy for the computation of the induced surface charge and total energy.

In this work, we propose and systematically analyse an efficient numerical method, based on the Galerkin BIE framework of Lindgren et al. and using the FMM, for the computation of the electrostatic forces acting on charged, dielectric spherical particles embedded in a homogenous dielectric medium undergoing mutual polarisation. The proposed algorithm can handle an arbitrary number of spherical particles of varying dielectric constants and radii, thus making it a powerful tool for practical applications. Furthermore, inspired by the analysis in the previous works \cite{Hassan1,Hassan2}, we prove that, under suitable geometrical assumptions which we describe in detail later, this numerical method achieves \underline{linear scaling in accuracy} for the computation of the forces. In order to demonstrate this claim, we first derive convergence rates for the approximate electrostatic forces that are explicitly independent of $N$, and we then present a linear scaling in cost solution strategy for the computation of the approximate forces. As a corollary of the convergence rates, we also obtain exponential convergence of the forces under appropriate regularity assumptions. Numerical evidence is provided that supports our theoretical results.

The remainder of this article is organised as follows. In Section 2, we introduce notation, describe the problem setting and the governing boundary integral equation, and restate the main tools and results from the papers \cite{Hassan1} and \cite{Hassan2} that we require for the subsequent analysis. In Section 3 we describe two approaches-- motivated by different physical considerations-- to defining the electrostatic forces, and we show that these are equivalent. In Section 4, we derive $N$- independent convergence rates for the forces thereby establishing $N$-error stability of our numerical method. In Section 5, we state a linear scaling in cost solution strategy using the FMM and present numerical results supporting our theoretical claims. Finally, in Section 6, we present our conclusion and discuss possible extensions.

\section{Problem Setting and Previous Results}\label{sec:2}
Throughout this article, we will use well-known results and notation from the theory of boundary integral equations. Most of these definitions and results can be found in standard textbooks such as \cite{McLean} or \cite{Schwab}. Additionally, we will state some results from the articles \cite{Hassan1} and \cite{Hassan2} that we will require for our subsequent analysis.

\subsection{Setting and Basic Notions}\label{sec:2a}
{We begin by describing precisely the types of geometrical situations we will consider in this article. As indicated in the introduction, we are interested in studying geometrical configurations that are the unions of an arbitrary number $N$ of non-intersecting open balls with varying radii in three dimensions. As in the previous contribution \cite{Hassan1} however, our claim of $N$-independent error estimates requires us to impose certain assumptions on the types of geometries we consider. To this end, let $\mathcal{I}$ denote a countable indexing set. We consider a so-called family of geometries~$\{\Omega_{\mathcal{F}}\}_{\mathcal{F} \in \mathcal{I}}$. Each element $\Omega_{\mathcal{F}} \subset \mathbb{R}^3$ in this family is the (set) union of a fixed number of non-intersecting open balls of varying locations and radii with associated dielectric constants, and therefore represents a particular physical geometric situation. It is easy to see that each element $\Omega_{\mathcal{F}}$ of this family of geometries is uniquely determined by the following four parameters:
	\begin{itemize}
		\item A non-zero number $N_{\mathcal{F}} \in \mathbb{N}$, which represents the total number of dielectric spherical particles that compose the geometry $\Omega_{\mathcal{F}}$;
		\item A collection of points $\{\bold{x}^{\mathcal{F}}_i\}_{i=1}^{N_{\mathcal{F}}} \in \mathbb{R}^3$, which represent the centres of the spherical particles composing the geometry $\Omega_{\mathcal{F}}$;
		\item A collection of positive real numbers $\{r_i^{\mathcal{F}}\}_{i=1}^{N_{\mathcal{F}}} \in \mathbb{R}$, which represent the radii of the spherical particles composing the geometry $\Omega_{\mathcal{F}}$;
		\item A collection of positive real numbers $\{\kappa^{\mathcal{F}}_i\}_{i=0}^N \in \mathbb{R}$. Here, $\kappa^{\mathcal{F}}_0$ denotes the dielectric constant of the external medium while $\{\kappa^{\mathcal{F}}_i\}_{i=1}^N$ represent the dielectric constants of each dielectric sphere.
	\end{itemize}
	Indeed, using the first three parameters we can define the open balls $\Omega^\mathcal{F}_i:= \mathcal{B}_{r_i}(\bold{x}_i) \subset \mathbb{R}^3$, $i \in \{1, \ldots, N_{\mathcal{F}}\}$ which represent the spherical dielectric particles composing the geometry $\Omega_{\mathcal{F}}$, i.e., $\Omega_{\mathcal{F}}= \cup_{i=1}^{N_\mathcal{F}} \Omega_i^{\mathcal{F}}$. Moreover, the fourth parameter $\{\kappa^{\mathcal{F}}_i\}_{i=0}^N$ denotes the dielectric constants associated with this geometry.

Exactly as in \cite{Hassan1}, we now impose the following three important assumptions on the above parameters:
	
	\begin{enumerate}
		
		\item[\textbf{A1:}] \textbf{[Uniformly bounded radii]} There exist constants $r^{\infty}_->0$ and $r^{\infty}_+>0$ such that 
		\begin{align*}
		\inf_{\mathcal{F} \in \mathcal{I}}\, \min_{i=1, \ldots, N_{\mathcal{F}}} r^{\mathcal{F}}_i > r^{\infty}_- \quad \text{and} \quad \sup_{\mathcal{F} \in \mathcal{I}}\, \max_{i=1, \ldots, N_{\mathcal{F}}} r^{\mathcal{F}}_i < r^{\infty}_+.
		\end{align*}
		
		\item[\textbf{A2:}] \textbf{[Uniformly bounded minimal separation]} There exists a constant $\epsilon^{\infty} > 0$ such that 
		\begin{align*}
		\inf_{\mathcal{F} \in \mathcal{I}}\, \min_{\substack{i, j=1, \ldots, N_{\mathcal{F} } \\ i \neq j}} \big(\vert \bold{x}_i^{\mathcal{F}} -\bold{x}_j^{\mathcal{F}}\vert - r^{\mathcal{F}}_i -r^{\mathcal{F}}_j\big)> \epsilon^{\infty}.
		\end{align*}
		
		\item[\textbf{A3:}] \textbf{[Uniformly bounded dielectric constants]} There exist constants $\kappa^{\infty}_->0$ and $\kappa^{\infty}_+>0$ such that 
		\begin{align*}
		\inf_{\mathcal{F} \in \mathcal{I}} \,\min_{i=1, \ldots, N_{\mathcal{F}}} \kappa^{\mathcal{F}}> \kappa^{\infty}_- \quad \text{and} \quad \sup_{\mathcal{F} \in \mathcal{I}} \, \max_{i=1, \ldots, N_{\mathcal{F}}} \kappa^{\mathcal{F}} < \kappa^{\infty}_+.
		\end{align*}
	\end{enumerate}
	
	\noindent	In other words we assume that the family of geometries $\{\Omega_{\mathcal{F}}\}_{\mathcal{F} \in \mathcal{I}}$ we consider in this article describe physical situations where the radii of the dielectric spherical particles, the minimum inter-sphere separation distance and the dielectric constants are all uniformly bounded. These assumptions are necessary because the error estimates we will derive, while explicitly independent of the number of dielectric particles $N_\mathcal{F}$, do depend on other geometrical parameters, and we would thus like to avoid situations where these geometric parameters degrade with increasing $N_\mathcal{F}$. Since many physical situations involve non-metallic dielectric particles which neither have vanishing or exploding dielectric constants nor vanishing or exploding radii (see, e.g., \cite{Crystal,lee2015direct,lindgren2018dynamic,Self,soh2014charging}), these assumptions do not greatly limit the scope of our results.
	
	In the remainder of this article, we will consider a fixed geometry from the family of geometries $\{\Omega_{\mathcal{F}}\}_{\mathcal{F} \in \mathcal{I}}$ satisfying the assumptions \textbf{A1)-A3)}. To avoid bulky notation we will drop the superscript and subscript $\mathcal{F}$ and denote this geometry by $\Omega^-$. The geometry is constructed as follows: Let $N \in \mathbb{N}$, let $\{\bold{x}_i\}_{i=1}^N \in \mathbb{R}^3$ be a collection of points in $\mathbb{R}^3$, and let $\{r_i\}_{i=1}^N \in \mathbb{R}$ be a collection of positive real numbers. For each $i \in \{1, \ldots, N\}$ we define $\Omega_i := \mathcal{B}_{r_i}(\bold{x}_i) \subset \mathbb{R}^3$ as the open ball of radius $r_i >0$ centred at the point $\bold{x}_i$. $\Omega^- \subset \mathbb{R}^3$ is defined as $\Omega^-:= \cup_{i=1}^N \Omega_i$. Furthermore, we define $\Omega^+:= \mathbb{R}^3 \setminus \overline{\Omega^-}$, and we write $\partial \Omega$ for the boundary of $\Omega^-$ and {$\eta(\bold{x})$ for the unit normal vector at $\bold{x} \in \partial \Omega$ pointing towards the exterior of~$\Omega^-$}.} Additionally, we denote by $\{\kappa_i\}_{i=1}^N \in \mathbb{R}_+$ the dielectric constants of all spherical particles $\{\Omega_i\}_{i=1}^N$ and by $\kappa_0 \in \mathbb{R}_+$ the dielectric constant of the background medium. To aid our exposition, we define the function $\kappa \colon \partial \Omega \rightarrow \mathbb{R}$ as $\kappa (\bold{x}):= \kappa_i ~\text{ for } \bold{x} \in \partial \Omega_i.$ 

Next, we define the Sobolev space $H^1(\Omega^-):= \left\{u \in L^2(\Omega^-) \colon \nabla u \in L^2(\Omega^-)\right\}$ with norm $\Vert u \Vert^2_{H^1(\Omega^-)}$ $:=\sum_{i=1}^N\Vert u \Vert^2_{L^2(\Omega_i)} + \Vert \nabla u \Vert^2_{L^2(\Omega_i)}$. We further define the weighted Sobolev space $H^1(\Omega^+)$ as the completion of $C^{\infty}_{\text{comp}}(\Omega^+)$ with respect to the norm $\Vert u \Vert^2_{H^1(\Omega^+)}:= \int_{\Omega^+} \frac{\vert v(\bold{x})\vert^2}{1 + \vert \bold{x}\vert^2}\, d\bold{x}+ \Vert \nabla v\Vert^2_{L^2(\Omega^+)}$. Functions that satisfy the decay conditions associated with exterior Laplace problems belong to this space (see, e.g., \cite[Section 2.9.2.4]{Schwab}). Additionally, we denote by $H^{\frac{1}{2}}(\partial \Omega)$ the Sobolev space of order $\frac{1}{2}$ with the Sobolev-Slobodeckij norm $\Vert \lambda \Vert^2_{H^{\frac{1}{2}}(\partial \Omega)}:=\sum_{i=1}^N \Vert \lambda\Vert^2_{L^2(\partial \Omega_i)} + \int_{\partial \Omega_i} \int_{\partial \Omega_i} \frac{\vert\lambda(\bold{x})-\lambda(\bold{y})\vert^2}{\vert \bold{x} - \bold{y}\vert^3 } \, d\bold{x} d\bold{y}$. Notice that we have chosen to define $\Vert \cdot \Vert^2_{H^{\frac{1}{2}}(\partial \Omega)}$ as a sum of local norms on each sphere. We also define $H^{-\frac{1}{2}}(\partial \Omega):=\left(H^{\frac{1}{2}}(\partial \Omega)\right)^*$, and we equip this dual space with the canonical dual norm $\Vert \cdot \Vert_{H^{-\frac{1}{2}}(\partial \Omega)}$.

For the sake of brevity, when there is no possibility of confusion, we will use the notation $\langle \cdot, \cdot \rangle_{\partial \Omega}$ and $\langle \cdot, \cdot \rangle_{\partial \Omega_i}$ to denote the duality pairings $\langle \cdot, \cdot \rangle_{H^{-\frac{1}{2}}(\partial \Omega) \times H^{\frac{1}{2}}(\partial \Omega)}$ and \\ $\langle \cdot, \cdot \rangle_{H^{-\frac{1}{2}}(\partial \Omega_i) \times H^{\frac{1}{2}}(\partial \Omega_i)}$ for a given $i \in \{1, \ldots, N\}$ respectively.

Equipped with these function spaces, we can introduce the fundamental linear operators we require for the subsequent exposition. We first introduce the mappings $\gamma^- \colon H^1(\Omega^{-})$ $\rightarrow H^{\frac{1}{2}}(\partial \Omega)$ and $\gamma^+ \colon H^1(\Omega^+) \rightarrow H^{\frac{1}{2}}(\partial \Omega)$ as the continuous, linear and surjective interior and exterior Dirichlet trace operators respectively (see, for example, \cite[Theorem 2.6.8, Theorem 2.6.11]{Schwab}). Next, for each $s \in \{+, -\}$ we define the closed subspace $\mathbb{H}(\Omega^s):= 
\{u \in H^{1}(\Omega^s) \colon \Delta u =0 \text{ in } \Omega^s\},$
and we write $\gamma^-_N \colon \mathbb{H}(\Omega^-) \rightarrow H^{-\frac{1}{2}}(\partial \Omega)$ and $\gamma^+_N \colon \mathbb{H}(\Omega^+) \rightarrow H^{-\frac{1}{2}}(\partial \Omega)$ for the interior and exterior Neumann trace operator respectively {(see \cite[Theorem 2.8.3]{Schwab} for precise conventions)}. We remark that the interior and exterior Dirichlet and Neumann trace operators can be defined analogously for functions of appropriate regularity defined on $\Omega^- \cup \Omega^+$ or $\mathbb{R}^3$. In addition, we introduce the so-called (interior) Dirichlet-to-Neumann map $\text{DtN} \colon H^{\frac{1}{2}}(\partial \Omega) \rightarrow H^{-\frac{1}{2}}(\partial \Omega)$ as follows: Given a function $\lambda \in H^{\frac{1}{2}}(\partial \Omega)$, DtN$\lambda = \gamma_N^- u_{\lambda} \in H^{-\frac{1}{2}}(\partial \Omega)$, where $u_{\lambda} \in H^1(\Omega^-)$ is the unique harmonic function such that $\gamma^- u_{\lambda}=\lambda$. Finally, for each $\nu \in H^{-\frac{1}{2}}(\partial \Omega)$ and all $\bold{x} \in \mathbb{R}^3 \setminus \partial \Omega$ we define the function
\begin{align*}
\mathcal{S}(\nu)(\bold{x})&:=\int_{\partial \Omega} \frac{\nu(\bold{y})}{4\pi\vert \bold{x}- \bold{y}\vert}\, d \bold{y},
\end{align*}
where the integral is understood as a $\langle \cdot, \cdot \rangle_{H^{-\frac{1}{2}}(\partial \Omega) \times H^{\frac{1}{2}}(\partial \Omega) }$ duality pairing. The mapping $\mathcal{S}$ is known as the single layer potential. It can be shown (see, e.g., \cite[Chapter 2]{Schwab}) that $\mathcal{S}$ is a linear bounded operator from $H^{-\frac{1}{2}}(\partial \Omega)$ to $H^{1}_{\rm loc}\left(\mathbb{R}^3\right)$, and also that $\mathcal{S}$ maps into the space of harmonic functions on the complement $\mathbb{R}^3 \setminus \partial \Omega$ of the boundary. The single layer potential allows us to define the boundary integral operator
\begin{align*}
\mathcal{V}&:= \big(\gamma^- \circ \mathcal{S}\big) \hspace{0.0cm}\colon \hspace{0.0cm}H^{-\frac{1}{2}}(\partial \Omega) \rightarrow H^{\frac{1}{2}}(\partial \Omega).
\end{align*}
The mapping $\mathcal{V}$ is also a bounded linear operator and is called the single layer boundary operator. Detailed definitions and properties of $\mathcal{V}$ as well as other boundary integral operators can, for instance, be found in \cite[Chapters 6, 7]{McLean} or \cite[Chapter 3]{Schwab}. We state some basic properties of $\mathcal{V}$ that we require for our analysis.

{ 
	\begin{lemma}[Properties of $\mathcal{V}$]\label{lem:single}~
		The single layer boundary operator $\mathcal{V} \colon H^{-\frac{1}{2}}(\partial \Omega)$ $\rightarrow H^{\frac{1}{2}}(\partial \Omega)$ is Hermitian and coercive, i.e., there exists a constant $c_{\mathcal{V}} > 0$ that depends on the radii of the open balls and the minimum inter-sphere separation distance but is independent of $N$ such that for all  $\sigma \in H^{-\frac{1}{2}}(\partial \Omega)$ it holds that
		\begin{align*}
		\langle \sigma, \mathcal{V}\sigma \rangle_{\partial \Omega} \geq c_{\mathcal{V}} \Vert \sigma\Vert^2_{H^{-\frac{1}{2}}(\partial \Omega)}.
		\end{align*}
		
		Consequently, the inverse $\mathcal{V}^{-1} \colon H^{\frac{1}{2}}(\partial \Omega) \rightarrow H^{-\frac{1}{2}}(\partial \Omega)$ is also a Hermitian, coercive and bounded linear operator. Additionally, for all $\lambda \in H^{\frac{1}{2}}(\partial \Omega)$, we have the bound
		\begin{align}\label{eq:contraction}
		\langle \text{\rm DtN}\lambda,  \mathcal{V}\text{\rm DtN} \lambda\rangle_{\partial \Omega} \leq \langle \mathcal{V}^{-1}\lambda,\lambda \rangle_{\partial \Omega} \leq \frac{1}{c_{\mathcal{V}}}
		\Vert \lambda \Vert^2_{H^{\frac{1}{2}}(\partial \Omega)}.
		\end{align}
	\end{lemma}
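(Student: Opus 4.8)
The plan is to reduce every assertion to a statement about Dirichlet energies of single layer potentials. The basic tool is the energy identity $\langle \sigma, \mathcal{V}\sigma\rangle_{\partial\Omega} = \Vert \nabla \mathcal{S}(\sigma)\Vert^2_{L^2(\mathbb{R}^3)}$ for $\sigma \in H^{-\frac12}(\partial\Omega)$, obtained by applying Green's first identity on $\Omega^-$ and on $\Omega^+$ (the term at infinity vanishing since $\mathcal{S}(\sigma) \in H^1(\Omega^+)$), then using $\gamma^-_N \mathcal{S}(\sigma) - \gamma^+_N \mathcal{S}(\sigma) = \sigma$ and $\gamma^- \mathcal{S}(\sigma) = \gamma^+ \mathcal{S}(\sigma) = \mathcal{V}\sigma$. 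Hermiticity of $\mathcal{V}$ is then immediate from the symmetry of the kernel $(\bold{x},\bold{y}) \mapsto (4\pi\vert \bold{x}-\bold{y}\vert)^{-1}$ and Fubini's theorem, first on smooth densities and then by density of $C^\infty(\partial\Omega)$ in $H^{-\frac12}(\partial\Omega)$ (see \cite[Chapter 6]{McLean} or \cite[Chapter 3]{Schwab}), while the energy identity gives $\langle\sigma,\mathcal{V}\sigma\rangle_{\partial\Omega}\geq 0$. Positivity on a fixed bounded Lipschitz boundary is classical; the substantive point --- which I expect to be the main obstacle --- is that $c_{\mathcal{V}}$ can be chosen independently of $N$. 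The plan here is to use the localisation argument of \cite{Hassan1}: because $\Vert\cdot\Vert^2_{H^{-\frac12}(\partial\Omega)}$ has been defined as the sum $\sum_{i=1}^N \Vert\cdot\Vert^2_{H^{-\frac12}(\partial\Omega_i)}$, it is enough to bound $\Vert\nabla\mathcal{S}(\sigma)\Vert^2_{L^2(\mathbb{R}^3)}$ from below by a fixed multiple of each $\Vert \sigma\vert_{\partial\Omega_i}\Vert^2_{H^{-\frac12}(\partial\Omega_i)}$, uniformly in $i$ and $N$; assumption \textbf{A1} provides scale-uniform trace and Neumann-trace estimates on each sphere, and assumption \textbf{A2} keeps the contribution of the charges carried by the remaining, well-separated spheres under control near $\partial\Omega_i$, so that $c_{\mathcal{V}}$ ends up depending only on $r^\infty_\pm$ and $\epsilon^\infty$.

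Given that $\mathcal{V}$ is bounded, Hermitian and coercive, the Lax--Milgram lemma applied to the form $(\sigma,\tau)\mapsto\langle\sigma,\mathcal{V}\tau\rangle_{\partial\Omega}$ shows it is an isomorphism $H^{-\frac12}(\partial\Omega) \to H^{\frac12}(\partial\Omega)$, and $\mathcal{V}^{-1}$ inherits Hermiticity. Combining $\langle\mathcal{V}^{-1}\lambda,\lambda\rangle_{\partial\Omega} = \langle\mathcal{V}^{-1}\lambda,\mathcal{V}\mathcal{V}^{-1}\lambda\rangle_{\partial\Omega} \geq c_{\mathcal{V}}\Vert\mathcal{V}^{-1}\lambda\Vert^2_{H^{-\frac12}(\partial\Omega)}$ with $\langle\mathcal{V}^{-1}\lambda,\lambda\rangle_{\partial\Omega} \leq \Vert\mathcal{V}^{-1}\lambda\Vert_{H^{-\frac12}(\partial\Omega)}\Vert\lambda\Vert_{H^{\frac12}(\partial\Omega)}$ gives $\Vert\mathcal{V}^{-1}\lambda\Vert_{H^{-\frac12}(\partial\Omega)} \leq c_{\mathcal{V}}^{-1}\Vert\lambda\Vert_{H^{\frac12}(\partial\Omega)}$, hence the boundedness of $\mathcal{V}^{-1}$ and the rightmost inequality in \eqref{eq:contraction}; coercivity of $\mathcal{V}^{-1}$ follows from $\langle\mathcal{V}^{-1}\lambda,\lambda\rangle_{\partial\Omega} \geq c_{\mathcal{V}}\Vert\mathcal{V}^{-1}\lambda\Vert^2_{H^{-\frac12}(\partial\Omega)} \geq c_{\mathcal{V}}\Vert\mathcal{V}\Vert^{-2}\Vert\lambda\Vert^2_{H^{\frac12}(\partial\Omega)}$.

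For the leftmost inequality in \eqref{eq:contraction} --- the only genuinely new computation --- I would again use the energy identity: $\langle\mathcal{V}^{-1}\lambda,\lambda\rangle_{\partial\Omega} = \Vert\nabla\mathcal{S}(\mathcal{V}^{-1}\lambda)\Vert^2_{L^2(\mathbb{R}^3)}$, and since $\mathcal{S}(\mathcal{V}^{-1}\lambda)$ is harmonic in $\Omega^-$ and in $\Omega^+$, decays at infinity and has Dirichlet trace $\mathcal{V}\mathcal{V}^{-1}\lambda = \lambda$, its restriction to $\Omega^-$ equals the harmonic extension $u_\lambda$; therefore $\langle\mathcal{V}^{-1}\lambda,\lambda\rangle_{\partial\Omega} \geq \Vert\nabla u_\lambda\Vert^2_{L^2(\Omega^-)} = \langle\text{DtN}\lambda,\lambda\rangle_{\partial\Omega}$, the last step by Green's identity for the harmonic $u_\lambda$. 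It then suffices to show $\langle\text{DtN}\lambda,\mathcal{V}\text{DtN}\lambda\rangle_{\partial\Omega} \leq \langle\text{DtN}\lambda,\lambda\rangle_{\partial\Omega}$, for which I would invoke the Green representation formula for $u_\lambda$, giving $\mathcal{S}(\text{DtN}\lambda) = u_\lambda\mathbbm{1}_{\Omega^-} + \mathcal{D}(\lambda)$ on $\mathbb{R}^3\setminus\partial\Omega$ with $\mathcal{D}$ the double layer potential (any fixed sign convention), and hence $\mathcal{V}\text{DtN}\lambda = \lambda + \gamma^-\mathcal{D}(\lambda)$. Expanding $\Vert\nabla\mathcal{S}(\text{DtN}\lambda)\Vert^2_{L^2(\mathbb{R}^3)}$ with this decomposition and evaluating the cross term $\int_{\Omega^-}\nabla u_\lambda\cdot\nabla\mathcal{D}(\lambda) = \langle\text{DtN}\lambda,\gamma^-\mathcal{D}(\lambda)\rangle_{\partial\Omega}$ by Green's identity, then comparing with the direct evaluation $\langle\text{DtN}\lambda,\mathcal{V}\text{DtN}\lambda\rangle_{\partial\Omega} = \langle\text{DtN}\lambda,\lambda\rangle_{\partial\Omega} + \langle\text{DtN}\lambda,\gamma^-\mathcal{D}(\lambda)\rangle_{\partial\Omega}$, the cross term cancels out and one is left with the identity $\langle\text{DtN}\lambda,\mathcal{V}\text{DtN}\lambda\rangle_{\partial\Omega} = \langle\text{DtN}\lambda,\lambda\rangle_{\partial\Omega} - \Vert\nabla\mathcal{D}(\lambda)\Vert^2_{L^2(\mathbb{R}^3)}$, which yields the claim immediately (the sign convention for $\mathcal{D}$ being irrelevant since the correction term is a nonnegative Dirichlet energy). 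Chaining the two inequalities gives \eqref{eq:contraction}. Apart from the $N$-uniformity of $c_{\mathcal{V}}$, everything is soft functional analysis together with short uses of Green's identities and the jump relations for layer potentials.
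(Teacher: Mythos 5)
Your overall structure is sound, and in fact it is more self-contained than the paper's own proof, which is purely a citation proof: Hermiticity and coercivity with bounded, coercive Hermitian inverse are attributed to the classical literature (McLean, Ch.~7), the $N$-independent dependencies of $c_{\mathcal{V}}$ to \cite[Lemmas 4.7, 4.8]{Hassan1}, and the bound \eqref{eq:contraction} to \cite{steinbach2001c}. Your derivation of \eqref{eq:contraction} is correct and is essentially a reconstruction of the Steinbach--Wendland argument: the energy identity $\langle \sigma,\mathcal{V}\sigma\rangle_{\partial\Omega}=\Vert\nabla\mathcal{S}\sigma\Vert^2_{L^2(\mathbb{R}^3)}$, the identification of $\mathcal{S}(\mathcal{V}^{-1}\lambda)\vert_{\Omega^-}$ with the harmonic extension $u_\lambda$ to get $\langle\mathcal{V}^{-1}\lambda,\lambda\rangle_{\partial\Omega}\geq\langle\mathrm{DtN}\lambda,\lambda\rangle_{\partial\Omega}$, and the representation-formula computation yielding $\langle\mathrm{DtN}\lambda,\mathcal{V}\mathrm{DtN}\lambda\rangle_{\partial\Omega}=\langle\mathrm{DtN}\lambda,\lambda\rangle_{\partial\Omega}-\Vert\nabla\mathcal{D}\lambda\Vert^2_{L^2(\mathbb{R}^3\setminus\partial\Omega)}$ all check out, as do the Lax--Milgram step and the elementary bounds for $\mathcal{V}^{-1}$ and the rightmost inequality in \eqref{eq:contraction}. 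What you prove by hand, the paper simply cites; the trade-off is only length versus self-containedness.

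The one point that needs correction is your sketch of the $N$-uniform coercivity. You say it is enough to bound $\Vert\nabla\mathcal{S}\sigma\Vert^2_{L^2(\mathbb{R}^3)}$ from below by a fixed multiple of \emph{each} local norm $\Vert\sigma\vert_{\partial\Omega_i}\Vert^2_{H^{-\frac{1}{2}}(\partial\Omega_i)}$, uniformly in $i$ and $N$. That is not enough: since $\Vert\sigma\Vert^2_{H^{-\frac{1}{2}}(\partial\Omega)}$ is the \emph{sum} of the local norms, a per-sphere lower bound only gives $\langle\sigma,\mathcal{V}\sigma\rangle_{\partial\Omega}\geq \frac{c}{N}\Vert\sigma\Vert^2_{H^{-\frac{1}{2}}(\partial\Omega)}$ after summing, i.e.\ a coercivity constant that degrades linearly in $N$ --- exactly what the lemma is supposed to rule out. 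What is needed, and what \cite[Lemmas 4.7, 4.8]{Hassan1} actually establishes, is a lower bound of the global Dirichlet energy by a uniform multiple of the \emph{sum} of the decoupled single-sphere energies (equivalently of $\sum_i\Vert\sigma\vert_{\partial\Omega_i}\Vert^2_{H^{-\frac{1}{2}}(\partial\Omega_i)}$), which requires a genuine localisation argument exploiting assumptions \textbf{A1} and \textbf{A2} to control the inter-sphere coupling with constants depending only on $r^\infty_\pm$ and $\epsilon^\infty$. Since both you and the paper ultimately defer this point to \cite{Hassan1}, the lemma stands, but your one-line description of that reduction would not deliver the $N$-independence if implemented literally.
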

	\begin{proof}
		The fact that $\mathcal{V}$ is Hermitian and coercive with Hermitian, coercive and bounded inverse is a classical result and can, for instance, be found in \cite[Chapter 7]{McLean}. The precise dependencies of the coercivity constant $c_{\mathcal{V}}$ were established in \cite[Lemma 4.7, 4.8]{Hassan1}. The bound \eqref{eq:contraction} is proven in \cite{steinbach2001c}. 
\end{proof}}

Lemma \ref{lem:single} implies in particular that $\mathcal{V}$ induces a norm $\Vert \cdot \Vert_{\mathcal{V}}$ on $H^{-\frac{1}{2}}(\partial \Omega)$ and the inverse $\mathcal{V}^{-1}$ induces a norm $\Vert \cdot \Vert_{\mathcal{V}^{-1}}$ on $H^{\frac{1}{2}}(\partial \Omega)$.

\subsection{Abstract Electrostatic Interaction Problem for Dielectric Spheres}\label{sec:2b}

We now state the problem we wish to analyse. To avoid trivial situations, we assume throughout this article that the spherical particles and the background medium have different dielectric constants, i.e., $\kappa_j\neq \kappa_0 ~\forall j=1, \ldots, N$ (for more details, see \cite[Remark 2.5]{Hassan1}). \vspace{0.5cm}

\noindent{\textbf{Boundary Integral Equation for the Induced Surface Charge}}~

Let $\sigma_f \in H^{-\frac{1}{2}}(\partial \Omega)$. Find $\nu \in H^{-\frac{1}{2}}(\partial \Omega)$ with the property that
\begin{align}\label{eq:3.3a}
\nu - \frac{\kappa_0-\kappa}{\kappa_0} (\text{DtN}\mathcal{V})\nu= \frac{4\pi}{\kappa_0}\sigma_f.
\end{align}
Here, the function $\sigma_f \in H^{-\frac{1}{2}}(\partial \Omega)$ is called the free charge and is a priori known. From a physical point of view, this is the charge distribution (up to some scaling) on each dielectric particle in the absence of any polarisation effects, i.e., if $\kappa=\kappa_0$. The unknown function $\nu \in H^{-\frac{1}{2}}(\partial \Omega)$ is known as the induced surface charge and represents, physically, the charge distribution that results on each dielectric sphere after including polarisation effects.

\begin{remark}
	The boundary integral equation (BIE) \eqref{eq:3.3a} can be derived from a PDE-based transmission problem as discussed in \cite{Hassan1} and \cite{lindgren2018}. Additionally, it is shown in \cite{Hassan1} that the BIE \eqref{eq:3.3a} can be reformulated as a boundary integral equation of the second kind, i.e., as an equation involving a compact perturbation of the identity.
\end{remark}

The BIE \eqref{eq:3.3a} describes the electrostatic interaction between dielectric spheres undergoing mutual polarisation in terms of the induced surface charge on each particle. An alternative approach is to consider, as the quantity of interest, the so-called surface electrostatic potential $\lambda:= \mathcal{V}\nu \in H^{\frac{1}{2}}(\partial \Omega)$ (see \cite{Hassan1,Hassan2} for a detailed exposition) but we do not consider this approach here. Instead, we focus on the Galerkin discretisation of the BIE \eqref{eq:3.3a}. To this end, we first introduce the approximation space. In the sequel, we denote by $\mathbb{N}_0$ the set of non-negative integers.


\begin{definition}[Spherical Harmonics]
	Let $\ell \in \mathbb{N}_0$ and $m \in \{-\ell, \ldots, \ell\}$ be integers. Then we denote by $\mathcal{Y}_\ell^m$ the real-valued $L^2$-orthonormal spherical harmonic of degree $\ell$ and order $m$. A precise definition can be found in \cite{Hassan1}.
\end{definition}

\begin{definition}[Approximation Space on a Sphere]\label{def:6.6}
	Let $\mathcal{O}_{\bold{x}_0} \subset \mathbb{R}^3$ be an open ball of radius $r > 0$ centred at the point $\bold{x}_0 \in \mathbb{R}^3$ and let $\ell_{\max} \in \mathbb{N}_0$. We define the finite-dimensional Hilbert space $W^{\ell_{\max}}(\partial \mathcal{O}_{\bold{x}_0}) \subset {H}^{\frac{1}{2}}(\partial\mathcal{O}_{\bold{x}_0}) \subset  {H}^{-\frac{1}{2}}(\partial\mathcal{O}_{\bold{x}_0})$ as the vector space
	\begin{align*}
	W^{\ell_{\max}}(\partial \mathcal{O}_{\bold{x}_0}):= \Big\{u \colon \partial \mathcal{O}_{\bold{x}_0} \rightarrow \mathbb{R} &\text{ such that } u(\bold{x})= \sum_{{\ell}=0}^{\ell_{\max}} \sum_{m=-\ell}^{m=+\ell} [u]^\ell_m \mathcal{Y}_{\ell}^m\left(\frac{\bold{x}-\bold{x}_0}{\vert \bold{x}-\bold{x}_0\vert}\right)\text{ where }[u]_{\ell}^m \in \mathbb{R}\Big\},
	\end{align*}
	equipped with the inner product
	\begin{align}\label{eq:SH1}
	(u, v)_{W^{\ell_{\max}}(\partial \mathcal{O}_{\bold{x}_0})}:=r^2 [u]_0^0 [v]_0^0 + r^2 \sum_{\ell=1}^{\ell_{\max}} \sum_{m=-\ell}^{m=+\ell} \frac{\ell}{r} [u]_{\ell}^m [v]_{\ell}^m \qquad \forall u, v \in W^{\ell_{\max}}(\partial \mathcal{O}_{\bold{x}_0}).
	\end{align}	
\end{definition}

It is now straightforward to extend the Hilbert space defined in Definition \ref{def:6.6} to the domain $\partial \Omega$.

\begin{definition}[Global Approximation Space]\label{def:6.7}
	Let $\ell_{\max}\in \mathbb{N}_0$. We define the finite-dimensional Hilbert space $W^{\ell_{\max}} \subset H^{\frac{1}{2}}(\partial \Omega) \subset H^{-\frac{1}{2}}(\partial \Omega)$ as the vector space
	\begin{align*}
	W^{\ell_{\max}}:= \Big\{u \colon \partial\Omega \rightarrow \mathbb{R} \text{ such that } \forall i \in \{1, \ldots, N\} \colon u\vert_{\partial \Omega_i} \in W^{\ell_{\max}}(\partial \Omega_i)\Big\},
	\end{align*}
	equipped with the inner product	$(u, v)_{W^{\ell_{\max}}}:= \sum_{i=1}^N \left(u, v\right)_{W^{\ell_{\max}}(\partial \Omega_i)} \forall u, v \in W^{\ell_{\max}}$.
\end{definition}

\noindent{\textbf{Galerkin Discretisation of the Integral Equation \eqref{eq:3.3a}}}~

Let $\sigma_f \in {H}^{-\frac{1}{2}}(\partial \Omega)$ and let $\ell_{\max} \in \mathbb{N}_0$. Find $\nu_{\ell_{\max}} \in W^{\ell_{\max}}$ such that for all $\psi_{\ell_{\max}} \in W^{\ell_{\max}}$ it holds that
\begin{align}\label{eq:Galerkina}
\left(\nu_{\ell_{\max}} - \frac{\kappa_0-\kappa}{\kappa_0}\left(\text{\rm DtN}\mathcal{V}\right) \nu_{\ell_{\max}} , \psi_{\ell_{\max}}\right)_{L^2(\partial \Omega)}= \frac{4\pi}{\kappa_0}\big(\sigma_f, \psi_{\ell_{\max}}\big)_{L^2(\partial \Omega)}.
\end{align}

The boundary integral equation \eqref{eq:3.3a} and the Galerkin discretisation \eqref{eq:Galerkina} have been analysed in the contributions \cite{Hassan1} and \cite{Hassan2}, and we refer interested readers to these papers for a detailed exposition on this topic. However, the basic framework developed in these contributions will be of use in our analysis and we therefore recall some of the key tools and results from these papers.

\subsection{Analysis Framework}
\begin{definition} 
	We define the $N$-dimensional, closed subspace $\mathcal{C}(\partial \Omega) \subset H^{\frac{1}{2}}(\partial \Omega)$ as
	\begin{align*}
	\mathcal{C}(\partial \Omega):= \left\{u \colon \partial \Omega \rightarrow \mathbb{R} \colon \forall i =1, \ldots, N \text{ the restriction }u|_{\partial \Omega_i} \text{ is a constant function}\right\}.
	\end{align*}
	Additionally, we define the closed subspaces $\breve{H}^{\frac{1}{2}}(\partial \Omega)$ $\subset H^{\frac{1}{2}}(\partial \Omega)$ and $\breve{H}^{-\frac{1}{2}}(\partial \Omega)\subset \breve{H}^{\frac{1}{2}}(\partial \Omega)$.
	\begin{align*}
	\breve{H}^{\frac{1}{2}}(\partial \Omega)&:=\left\{u \in H^{\frac{1}{2}}(\partial \Omega) \colon (u, v)_{L^2(\partial \Omega)}=0 \hspace{7mm}\forall v \in \mathcal{C}(\partial \Omega)\right\},\\
	\breve{H}^{-\frac{1}{2}}(\partial \Omega)&:=\left\{\phi \in H^{-\frac{1}{2}}(\partial \Omega) \colon \langle \phi, v\rangle_{\partial \Omega}=0 ~\qquad ~\forall v \in \mathcal{C}(\partial \Omega)\right\}.
	\end{align*}
\end{definition}

The following result is simple to establish.
\begin{lemma}\label{def:projections} 
	There exist complementary decompositions (in the sense of Brezis \cite[Section 2.4]{Brezis}) of the spaces $H^{\frac{1}{2}}(\partial \Omega)$ and $H^{-\frac{1}{2}}(\partial \Omega)$ given by
	\begin{align}\label{eq:Appendix1}
	H^{\frac{1}{2}}(\partial \Omega)=\breve{H}^{\frac{1}{2}}(\partial \Omega) \oplus \mathcal{C}(\partial \Omega)\quad \text{and} \quad
	H^{-\frac{1}{2}}(\partial \Omega)=\breve{H}^{-\frac{1}{2}}(\partial \Omega) \oplus \mathcal{C}(\partial \Omega).
	\end{align}
	
	{     Moreover, the projection operators $\mathbb{P}^{\perp}_0\colon H^{\frac{1}{2}}(\partial \Omega) \rightarrow \breve{H}^{\frac{1}{2}}(\partial \Omega)$ and $\mathbb{P}_0\colon H^{\frac{1}{2}}(\partial \Omega) \rightarrow \mathcal{C}(\partial \Omega)$, $\mathbb{Q}^{\perp}_0\colon$ $H^{-\frac{1}{2}}(\partial \Omega) \rightarrow \breve{H}^{-\frac{1}{2}}(\partial \Omega)$, and $\mathbb{Q}_0\colon H^{-\frac{1}{2}}(\partial \Omega) \rightarrow \mathcal{C}(\partial \Omega)$ associated with these complementary decompositions are all bounded.}
\end{lemma}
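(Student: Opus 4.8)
The plan is to construct the four projections explicitly; this establishes the complementary decompositions and their boundedness simultaneously, and even produces constants independent of $N$. The observation that organises everything is that the balls $\Omega_1,\dots,\Omega_N$ are pairwise disjoint, so $\partial\Omega$ is the disjoint union of the spheres $\partial\Omega_i$. By the way the norms were defined earlier in Section~\ref{sec:2a}, this means that $H^{\frac12}(\partial\Omega)$ and $H^{-\frac12}(\partial\Omega)$ are, as Hilbert spaces, the orthogonal direct sums $\bigoplus_{i=1}^N H^{\frac12}(\partial\Omega_i)$ and $\bigoplus_{i=1}^N H^{-\frac12}(\partial\Omega_i)$, while the $L^2(\partial\Omega)$ inner product and the $H^{-\frac12}$--$H^{\frac12}$ duality pairing split accordingly over the spheres. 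Writing $\chi_i\in\mathcal{C}(\partial\Omega)$ for the function equal to $1$ on $\partial\Omega_i$ and $0$ on the remaining spheres, the $\{\chi_i\}_{i=1}^N$ form a basis of $\mathcal{C}(\partial\Omega)$, and disjointness forces the Gram matrix to be diagonal: $\langle\chi_i,\chi_j\rangle_{\partial\Omega}=(\chi_i,\chi_j)_{L^2(\partial\Omega)}=\vert\partial\Omega_i\vert\,\delta_{ij}$, with $\Vert\chi_i\Vert^2_{H^{\frac12}(\partial\Omega_i)}=\vert\partial\Omega_i\vert$ since the Slobodeckij seminorm of a constant vanishes. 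Finally, $\mathcal{C}(\partial\Omega)$ is closed in both $H^{\frac12}(\partial\Omega)$ and $H^{-\frac12}(\partial\Omega)$ because it is finite dimensional, and $\breve H^{\frac12}(\partial\Omega)$, $\breve H^{-\frac12}(\partial\Omega)$ are closed, being intersections of the kernels of the finitely many bounded functionals $u\mapsto(u,\chi_i)_{L^2(\partial\Omega)}$ and $\phi\mapsto\langle\phi,\chi_i\rangle_{\partial\Omega}$.

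For the $H^{\frac12}$-decomposition I would take $\mathbb{P}_0$ to be the sphere-wise mean-value operator, $\mathbb{P}_0 u := \sum_{i=1}^N\big(\vert\partial\Omega_i\vert^{-1}\int_{\partial\Omega_i}u\,ds\big)\chi_i$, and set $\mathbb{P}^{\perp}_0 := I-\mathbb{P}_0$. Then $\mathbb{P}_0 u\in\mathcal{C}(\partial\Omega)$ while $\mathbb{P}^{\perp}_0 u$ has vanishing mean on every sphere and hence is $L^2(\partial\Omega)$-orthogonal to $\mathcal{C}(\partial\Omega)$, i.e. lies in $\breve H^{\frac12}(\partial\Omega)$; moreover $\mathbb{P}_0$ fixes $\mathcal{C}(\partial\Omega)$ pointwise, so $\mathbb{P}_0^2=\mathbb{P}_0$, and $\mathcal{C}(\partial\Omega)\cap\breve H^{\frac12}(\partial\Omega)=\{0\}$ because the $L^2$ inner product is positive definite on $\mathcal{C}(\partial\Omega)$. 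Together these give the splitting $H^{\frac12}(\partial\Omega)=\breve H^{\frac12}(\partial\Omega)\oplus\mathcal{C}(\partial\Omega)$ with $\mathbb{P}^{\perp}_0$ and $\mathbb{P}_0$ the associated projections, and boundedness follows from a one-sphere Cauchy--Schwarz estimate ($\Vert(\mathbb{P}_0 u)|_{\partial\Omega_i}\Vert_{H^{\frac12}(\partial\Omega_i)}\le\Vert u\Vert_{L^2(\partial\Omega_i)}\le\Vert u\Vert_{H^{\frac12}(\partial\Omega_i)}$) summed over $i$, which gives an $N$-uniform bound for $\mathbb{P}_0$ and hence for $\mathbb{P}^{\perp}_0$.

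For the $H^{-\frac12}$-decomposition the pointwise mean is no longer available, so instead I would set $\mathbb{Q}_0\phi := \sum_{i=1}^N\vert\partial\Omega_i\vert^{-1}\langle\phi,\chi_i\rangle_{\partial\Omega}\,\chi_i$ and $\mathbb{Q}^{\perp}_0 := I-\mathbb{Q}_0$. The diagonality of the Gram matrix yields $\langle\mathbb{Q}_0\phi,\chi_j\rangle_{\partial\Omega}=\langle\phi,\chi_j\rangle_{\partial\Omega}$ for every $j$, so $\mathbb{Q}^{\perp}_0\phi$ annihilates $\mathcal{C}(\partial\Omega)$ and therefore belongs to $\breve H^{-\frac12}(\partial\Omega)$; the same identity shows $\mathbb{Q}_0$ is idempotent and fixes $\mathcal{C}(\partial\Omega)$, while $\mathcal{C}(\partial\Omega)\cap\breve H^{-\frac12}(\partial\Omega)=\{0\}$ exactly as before. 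This gives $H^{-\frac12}(\partial\Omega)=\breve H^{-\frac12}(\partial\Omega)\oplus\mathcal{C}(\partial\Omega)$, and boundedness comes from the local duality bound $\vert\langle\phi,\chi_i\rangle_{\partial\Omega_i}\vert\le\Vert\phi\Vert_{H^{-\frac12}(\partial\Omega_i)}\Vert\chi_i\Vert_{H^{\frac12}(\partial\Omega_i)}$, summed over the spheres and combined with the continuous embeddings $\Vert w\Vert_{H^{-\frac12}(\partial\Omega)}\le\Vert w\Vert_{L^2(\partial\Omega)}\le\Vert w\Vert_{H^{\frac12}(\partial\Omega)}$ (each with constant one), to produce an $N$-uniform bound for $\mathbb{Q}_0$ and hence for $\mathbb{Q}^{\perp}_0$.

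I expect the only step that is more than bookkeeping to be the verification that the ranges of $\mathbb{P}^{\perp}_0$ and $\mathbb{Q}^{\perp}_0$ are \emph{exactly} $\breve H^{\frac12}(\partial\Omega)$ and $\breve H^{-\frac12}(\partial\Omega)$ (equality, not merely inclusion) and that $\mathbb{P}_0$, $\mathbb{Q}_0$ indeed fix $\mathcal{C}(\partial\Omega)$; both reduce to the diagonality of the Gram matrix of $\{\chi_i\}$, which is precisely where the pairwise disjointness of the balls (and thus Assumption \textbf{A2}) enters and is what keeps the constants independent of $N$. As a consistency check one may also note that once $\mathcal{C}(\partial\Omega)$ is known to be a closed finite-dimensional subspace admitting the closed complement $\breve H^{\pm\frac12}(\partial\Omega)$, boundedness of the projections is automatic from the closed graph theorem, which is the content of the complementary-decomposition notion of Brezis \cite[Section~2.4]{Brezis}; I retain the explicit formulae only because they furnish the $N$-uniform bounds directly.
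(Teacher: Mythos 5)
Your construction is correct and is essentially the argument the paper intends: the lemma is stated there without proof (``simple to establish''), and the explicit sphere-wise mean-value projection $\mathbb{P}_0$ with its dual counterpart $\mathbb{Q}_0$, together with your closedness, idempotency, range and trivial-intersection checks, is the standard way to obtain both decompositions and the ($N$-uniform) boundedness of all four projections. One minor correction: only the pairwise disjointness of the balls is used here; the uniform separation assumption \textbf{A2} plays no role in this particular lemma.
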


Intuitively, the spaces $\breve{H}^{\frac{1}{2}}(\partial \Omega)$ and $\breve{H}^{-\frac{1}{2}}(\partial \Omega)$ do not contain piecewise constant functions, and this implies in particular that the Dirichlet-to-Neumann map $\text{DtN} \colon \breve{H}^{\frac{1}{2}}(\partial \Omega) \rightarrow \breve{H}^{-\frac{1}{2}}(\partial \Omega)$ is an isomorphism on these spaces. Note that these spaces can also be defined on individual spheres.

Using the projection operators defined in Definition \ref{def:projections}, it is possible to introduce new norms on the spaces $H^{\frac{1}{2}}(\partial \Omega)$ and $H^{\frac{1}{2}}(\partial \Omega)$. These new norms were first introduced in \cite{Hassan1} and will be used in a crucial way in the analysis of the current article.
\begin{definition}\label{def:NewNorm}
	We define on $H^{\frac{1}{2}}(\partial \Omega)$ a new norm $||| \cdot ||| \colon H^{\frac{1}{2}}(\partial \Omega) \rightarrow \mathbb{R}$ given by
	\begin{align*}
	\forall \lambda \in H^{\frac{1}{2}}(\partial \Omega)\colon ~ |||\lambda|||^2 := \left\Vert \mathbb{P}_0 \lambda\right\Vert^2_{L^2(\partial \Omega)}+ \left\langle \text{\rm DtN}\lambda, \lambda\right \rangle_{\partial \Omega},
	\end{align*}
	and we define on $H^{-\frac{1}{2}}(\partial \Omega)$ a new dual norm $||| \cdot |||^* \colon H^{-\frac{1}{2}}(\partial \Omega) \rightarrow \mathbb{R}$ given by
	\begin{align*}
	||| \sigma |||^* := \sup_{\psi \in H^{\frac{1}{2}}(\partial \Omega) } \frac{\left \langle \sigma, \psi \right \rangle_{\partial \Omega} }{||| \psi |||}.
	\end{align*} 
\end{definition}
As shown in \cite{Hassan1}, the norm $||| \cdot |||$ is equivalent to the $\Vert \cdot \Vert_{H^{\frac{1}{2}}(\partial \Omega)}$ norm. More precisely, there exists an $N$-independent constant $c_{\rm equiv} >1$ such that $\frac{1}{c_{\rm equiv}} ||| \lambda ||| \leq \Vert  \lambda \Vert_{H^{\frac{1}{2}}(\partial \Omega)} \leq c_{\rm equiv} ||| \lambda ||| ~~\forall \lambda \in H^{\frac{1}{2}}(\partial \Omega)$. Similarly, the new $||| \cdot |||^*$ dual norm on ${H}^{-\frac{1}{2}}(\partial \Omega)$ is equivalent to the canonical dual norm $\Vert \cdot \Vert_{H^{-\frac{1}{2}}(\partial \Omega)}$ with equivalence constant that is once again independent of $N$. Finally, it is easy to show that for all $\tilde{\lambda} \in \breve{H}^{\frac{1}{2}}(\partial \Omega)$ it holds that
\begin{align*}
||| \text{DtN} \tilde{\lambda} |||^* = ||| \tilde{\lambda} |||.
\end{align*}

In the sequel, we adopt the convention that the Hilbert space ${H}^{\frac{1}{2}}(\partial \Omega)$ is equipped with the $||| \cdot |||$ norm and that the dual space $H^{-\frac{1}{2}}(\partial \Omega)$ is equipped with the $||| \cdot |||^*$ norm. 

{ 
	\begin{definition}[Projectors on the Approximation Space]\label{def:PQ}~
		Let $\ell_{\max} \in \mathbb{N}_0$ and let the approximation space $W^{\ell_{\max}}$ be defined as in Definition \ref{def:6.7}. We define the projection operator $\mathbb{P}_{\ell_{\max}}\colon H^{\frac{1}{2}}(\partial \Omega) \rightarrow W^{\ell_{\max}}$ as the mapping with the property that for any $\psi \in H^{\frac{1}{2}}(\partial \Omega)$,  $\mathbb{P}_{\ell_{\max}}\psi $ is the unique element of $W^{\ell_{\max}}$ satisfying
		\begin{align*}
		\left(\phi_{\ell_{\max}}, \mathbb{P}_{\ell_{\max}}\psi\right)_{L^2(\partial \Omega)}=\left\langle \phi_{\ell_{\max}}, \psi\right\rangle_{\partial \Omega} \qquad \forall \phi_{\ell_{\max}} \in W^{\ell_{\max}}.
		\end{align*}
		
		Moreover, we define the projection operator $\mathbb{Q}_{\ell_{\max}}\colon H^{-\frac{1}{2}}(\partial \Omega) \rightarrow W^{\ell_{\max}}$ as the mapping with the property that for any $\sigma \in H^{-\frac{1}{2}}(\partial \Omega)$,  $\mathbb{Q}_{\ell_{\max}}\sigma $ is the unique element of $W^{\ell_{\max}}$ satisfying
		\begin{align*}
		\left(\mathbb{Q}_{\ell_{\max}}\sigma, \phi_{\ell_{\max}}\right)_{L^2(\partial \Omega)}&=\left\langle \sigma, \phi_{\ell_{\max}}\right\rangle_{\partial \Omega} \qquad \forall \phi_{\ell_{\max}} \in W^{\ell_{\max}}.
		\end{align*}
	\end{definition}
	
	\begin{remark}
		Consider the setting of Definition \ref{def:PQ}. It is possible to show that the projection operators $\mathbb{P}_{\ell_{\max}}$ and $\mathbb{Q}_{\ell_{\max}}$ are stable, i.e., for all $\psi \in H^{\frac{1}{2}}(\partial \Omega)$ and all $\sigma \in H^{-\frac{1}{2}}(\partial \Omega)$ it holds that
		\begin{align*}
		||| \mathbb{P}_{\ell_{\max}} \psi ||| \leq ||| \psi ||| \quad \text{and} \quad ||| \mathbb{Q}_{\ell_{\max}}\sigma|||^* \leq ||| \sigma |||^*.
		\end{align*}
	\end{remark}
	
}

Finally, we define the higher regularity spaces that will be used in the subsequent error estimates.
\begin{definition}\label{def:7.1}
	Let $s\geq 0$ be a real number and let $\mathcal{O}_{\bold{x}_0} \subset \mathbb{R}^3$ be an open ball of radius $r > 0$ centred at the point $\bold{x}_0 \in \mathbb{R}^3$. Then we define constructively the fractional Sobolev space ${H}^{s}(\partial\mathcal{O}_{\bold{x}})$ as the set
	\begin{align*}
	{H}^{s}(\partial\mathcal{O}_{\bold{x}_0}):= \Big\{u \colon \partial \mathcal{O}_{\bold{x}_0} \rightarrow \mathbb{R} \text{ such that } u(\bold{x})&= \sum_{{\ell}=0}^\infty \sum_{m=-\ell}^{m=+\ell} [u]^\ell_m\mathcal{Y}_{\ell}^m\left(\frac{\bold{x}-\bold{x}_0}{\vert \bold{x}-\bold{x}_0\vert}\right) \\
	\text{where all }[u]_{\ell}^m \in \mathbb{R} \text{ satisfy }& \sum_{\ell=1}^\infty\sum_{m=-\ell}^{m=+\ell}\left(\frac{l}{r}\right)^{2s}([u]_m^{\ell})^2< \infty\Big\},
	\end{align*}
	equipped with the inner product
	\begin{align}\label{eq:frac1}
	(u, v)_{{H}^{s}(\partial\mathcal{O}_{\bold{x}_0})}:= r^2[u]_0^0 \, [v]_0^0+r^2\sum_{\ell=1}^\infty\sum_{m=-\ell}^{m=+\ell}\left(\frac{\ell}{r}\right)^{2s}[u]_\ell^m [v]_\ell^m \qquad \forall u,v \in H^s(\partial \mathcal{O}_{\bold{x}_0}).
	\end{align}
	Additionally, we write $||| \cdot |||_{{H}^{s}(\partial\mathcal{O}_{\bold{x}_0})}$ to denote the norm induced by $(\cdot, \cdot)_{{H}^{s}(\partial\mathcal{O}_{\bold{x}_0})}$.
	
\end{definition}


\begin{definition}\label{def:7.2}
	Let $s\geq 0$ be a real number. Then we define the Hilbert space ${H}^{s}(\partial \Omega)$ as the set
	\begin{align*}
	{H}^{s}(\partial \Omega):= \Big\{u \colon \partial\Omega \rightarrow \mathbb{R} \text{ such that } \forall i \in \{1, \ldots, N\} \colon u\vert_{\partial \Omega_i} \in {H}^{s}(\partial \Omega_i)\Big\},
	\end{align*}
	equipped with the inner product $(u, v)_{{H}^{s}(\partial \Omega)}:= \sum_{i=1}^N \left(u, v\right)_{{H}^{s}(\partial \Omega_i)}~ \forall u, v \in H^s(\partial \Omega)$. Additionally, we write $||| \cdot |||_{{H}^{s}(\partial \Omega)}$ to denote the norm induced by $(\cdot, \cdot)_{{H}^{s}(\partial \Omega)}$.
\end{definition}

It can be verified that the norm $|||\cdot |||_{H^{\frac{1}{2}}(\partial \Omega)}$ coincides with the $||| \cdot |||$ norm defined through Definition \ref{def:NewNorm} and the norm $\Vert\cdot \Vert_{W^{\ell_{\max}}}$ on the approximation space $W^{\ell_{\max}}$.

\subsection{Previous Results}\label{sec:2c}

We state two key results from \cite{Hassan1} and \cite{Hassan2}. 

\begin{theorem}[{\cite[Theorem 2.23, Theorem 4.19]{Hassan1}}]\label{thm:well-posed}~
	Let $\sigma_f \in H^{-\frac{1}{2}}(\partial \Omega)$. There exists a unique solution $\nu \in H^{-\frac{1}{2}}(\partial \Omega)$ to the BIE \eqref{eq:3.3a} with right-hand side generated by $\sigma_f$. Additionally, let $\ell_{\max} \in \mathbb{N}$. Then there exists a unique solution $\nu_{\ell_{\max}} \in W^{\ell_{\max}}$ to the Galerkin discretisation \eqref{eq:Galerkina} with right-hand side generated by $\sigma_f$, and for all real numbers $s > -\frac{1}{2}$ we have the error bound	
	\begin{align*}
	{||| \nu-\nu_{\ell_{\max}}|||^*} \leq & C_{\rm charges}\left(\frac{\max r_j}{\ell_{\max}+1}\right)^{s+\frac{1}{2}} \left(\big|\big|\big| \mathbb{Q}_{0}^{\perp}\nu\big|\big|\big|_{{H}^{s}(\partial \Omega)} + \big|\big|\big| \mathbb{Q}_{0}^{\perp}\sigma_f\big|\big|\big|_{H^s(\partial \Omega)}\right),
	\end{align*}
	where the constant $C_{\rm charges}>1$ depends only on the dielectric constants, the radii of the spheres and the minimum inter-sphere separation distance. 
\end{theorem}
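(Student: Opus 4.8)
The plan is to establish the two assertions—well-posedness and the Galerkin error bound—by working in the framework of the $|||\cdot|||$ and $|||\cdot|||^*$ norms, which is precisely what makes the constants $N$-independent. For well-posedness of the continuous BIE \eqref{eq:3.3a}, I would rewrite it in the equivalent form $\lambda - \frac{\kappa_0-\kappa}{\kappa_0}\mathcal{V}\,\mathrm{DtN}\,\lambda = \frac{4\pi}{\kappa_0}\mathcal{V}\sigma_f$ for $\lambda = \mathcal{V}\nu \in H^{\frac{1}{2}}(\partial\Omega)$ (or work directly with $\nu$), and then view the operator $\mathcal{A} := \mathrm{Id} - \frac{\kappa_0-\kappa}{\kappa_0}\mathrm{DtN}\,\mathcal{V}$ on $H^{-\frac{1}{2}}(\partial\Omega)$. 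The key is a coercivity/contraction estimate: using the bound \eqref{eq:contraction} from Lemma \ref{lem:single}, namely $\langle \mathrm{DtN}\lambda, \mathcal{V}\,\mathrm{DtN}\lambda\rangle_{\partial\Omega} \le \langle \mathcal{V}^{-1}\lambda,\lambda\rangle_{\partial\Omega}$, together with the assumption $\kappa_j \ne \kappa_0$ and the uniform bounds \textbf{A3} on the dielectric constants, one shows that $\big|\frac{\kappa_0-\kappa}{\kappa_0}\big|$ times the relevant operator norm is strictly less than one in the energy inner product induced by $\mathcal{V}$ (equivalently, in the $|||\cdot|||$ setting); a Neumann-series / Lax–Milgram argument then gives existence, uniqueness and stability. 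The decomposition of Lemma \ref{def:projections} is used to handle the constant-on-each-sphere component, on which $\mathrm{DtN}$ vanishes, separately from the $\breve{H}$ component where $\mathrm{DtN}$ is an isomorphism.

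For the discrete problem, since $W^{\ell_{\max}} \subset H^{\frac{1}{2}}(\partial\Omega) \subset H^{-\frac{1}{2}}(\partial\Omega)$ is a conforming subspace and the same coercivity holds on it (the contraction constant is inherited because $\mathbb{P}_{\ell_{\max}}$, $\mathbb{Q}_{\ell_{\max}}$ are stable in the $|||\cdot|||$, $|||\cdot|||^*$ norms, as recalled in the remark after Definition \ref{def:PQ}), existence and uniqueness of $\nu_{\ell_{\max}}$ follow by the same argument applied in finite dimensions. For the error estimate, I would run a Céa-type quasi-optimality argument in the $|||\cdot|||^*$ norm: the Galerkin orthogonality combined with the uniform (in $N$ and $\ell_{\max}$) coercivity yields $|||\nu - \nu_{\ell_{\max}}|||^* \lesssim \inf_{\psi_{\ell_{\max}} \in W^{\ell_{\max}}} |||\nu - \psi_{\ell_{\max}}|||^*$, with constant depending only on the contraction factor, hence only on $\kappa^\infty_\pm$, $c_{\mathcal{V}}$ (which depends on the radii and minimal separation), i.e. not on $N$. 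Taking $\psi_{\ell_{\max}} = \mathbb{Q}_{\ell_{\max}}\nu$ (or the componentwise $L^2$-projection) reduces matters to a per-sphere best-approximation estimate.

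The remaining ingredient is the approximation estimate on a single sphere: for $u \in H^s(\partial\mathcal{O})$ with $s > -\frac12$, the truncation to degree $\ell_{\max}$ in spherical harmonics satisfies $|||u - \Pi_{\ell_{\max}} u|||^*_{H^{-1/2}(\partial\mathcal{O})} \lesssim \big(\frac{r}{\ell_{\max}+1}\big)^{s+\frac12} |||u|||_{H^s(\partial\mathcal{O})}$, which is immediate from the explicit diagonalisation of the relevant norms in the $\mathcal{Y}_\ell^m$ basis given in Definitions \ref{def:7.1}–\ref{def:7.2}: the ratio of the $H^{-1/2}$ weight $(\ell/r)^{-1}$ to the $H^s$ weight $(\ell/r)^{2s}$ behaves like $(\ell/r)^{-(2s+1)}$, and $\ell \ge \ell_{\max}+1$ on the discarded modes. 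Summing these local bounds over the $N$ spheres (using that all norms on $\partial\Omega$ are defined as sums of local norms) and splitting off the constant mode via $\mathbb{Q}_0^{\perp}$—on which $\mathrm{DtN}$ and hence the whole analysis is nondegenerate—produces exactly the stated bound with $C_{\rm charges}$ absorbing the coercivity constant, the equivalence constants $c_{\rm equiv}$, and the factor $\big|\frac{\kappa_0-\kappa}{\kappa_0}\big|$.

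I expect the main obstacle to be making the coercivity argument for the BIE operator genuinely $N$-uniform: one must carefully exploit the contraction bound \eqref{eq:contraction} in the correct energy norm (the $\mathcal{V}$-norm, tied to the $|||\cdot|||$ norm) rather than the generic operator norm, and correctly treat the kernel of $\mathrm{DtN}$—the piecewise-constant functions—via the complementary decomposition of Lemma \ref{def:projections}, since on that subspace the problem decouples trivially but must still be shown to contribute no $N$-dependence. Once the operator is shown to be a uniform contraction perturbation of the identity in the right norm, the rest (discrete inf-sup, Céa, local approximation) is essentially bookkeeping. In fact, this is why the authors cite \cite[Theorem 2.23, Theorem 4.19]{Hassan1}: the heavy lifting was done there, and here it is simply being quoted.
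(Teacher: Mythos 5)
This statement is not proved in the paper at all: it is imported verbatim from \cite[Theorem 2.23, Theorem 4.19]{Hassan1}, so there is no in-paper argument to compare yours against, and your closing sentence correctly identifies this. On its own merits, your outline reproduces the right scaffolding from the framework recalled in Section \ref{sec:2}: the $|||\cdot|||$ and $|||\cdot|||^*$ norms, the complementary splitting of Lemma \ref{def:projections} (and the observation that the range of $\text{DtN}$ is mean-free on each sphere, so the piecewise-constant component of $\nu$ is fixed directly by $\mathbb{Q}_0\sigma_f$), the stability of $\mathbb{P}_{\ell_{\max}},\mathbb{Q}_{\ell_{\max}}$, a quasi-optimality step, and the per-sphere truncation estimate, which is indeed immediate from the diagonal form of the norms in Definitions \ref{def:7.1}--\ref{def:7.2} and produces the factor $\big(\max r_j/(\ell_{\max}+1)\big)^{s+\frac{1}{2}}$ with $N$-independent constants because all global norms are sums of local ones.

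There is, however, a genuine gap in your central well-posedness step. You claim that $\big|\tfrac{\kappa_0-\kappa}{\kappa_0}\big|$ times the relevant operator norm of $\text{DtN}\,\mathcal{V}$ is strictly less than one and invoke a Neumann series. The bound \eqref{eq:contraction} only gives $\Vert \text{DtN}\,\mathcal{V}\Vert \le 1$ in the $\mathcal{V}$-energy norm (it is not a strict contraction uniformly), and the prefactor $\vert 1-\kappa_i/\kappa_0\vert$ exceeds $1$ as soon as $\kappa_i>2\kappa_0$ --- a regime explicitly permitted by \textbf{A3} and used in the paper's numerical tests ($\kappa_i=5,10,100$ with $\kappa_0=1$). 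Hence no smallness-based contraction argument can be uniform; the invertibility must instead exploit the sign and spectral structure of the problem (e.g., that $\text{DtN}\,\mathcal{V}$ is positive semi-definite in the $\mathcal{V}$-inner product with numerical range bounded away from $1$ by a constant depending on radii and separation but not $N$, while $(\kappa_0-\kappa)/\kappa_0<1$, the large-contrast values being negative and therefore harmless), or, as in \cite{Hassan1}, an explicit inf-sup estimate for the non-symmetric form; note that multiplication by the piecewise-constant $\kappa$-factor is not self-adjoint in the $\mathcal{V}$-inner product, which is exactly what makes this step delicate. Relatedly, your C\'ea-type bound in the $|||\cdot|||^*$ norm does not follow from conformity plus projection stability alone: it requires a discrete inf-sup constant uniform in both $N$ and $\ell_{\max}$, which again rests on the refined argument above rather than on the contraction you posit.
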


Theorem \ref{thm:well-posed} establishes errors estimates for the induced surface charge that do not explicitly depend on the number of spherical particles $N$. We can therefore conclude that given any geometry in the family of geometries $\{\Omega_{\mathcal{F}}\}_{\mathcal{F}\in \mathcal{I}}$ satisfying assumptions {\textbf{A1)-A3)}}, for a fixed number of degrees of freedom per sphere, the relative or average error in the approximate induced surface charge does not increase if the number $N_{\mathcal{F}}$ of spherical dielectric particles in the system increases. A major goal of this article is to obtain similar $N$-independent errors estimates for the electrostatic forces. We remark that a closed form expression of the pre-factor $C_{\rm charges}$ can be found in \cite[Theorem 2.22]{Hassan1}.

\begin{theorem}[{\cite[Theorem 3.12]{Hassan2}}]\label{thm:iterations}~
	Let $\ell_{\max} \in \mathbb{N}$, let $\sigma_f \in H^{-\frac{1}{2}}(\partial \Omega)$, and let $\nu_{\ell_{\max}}\in W^{\ell_{\max}}$ be the unique solution to the Galerkin discretisation \eqref{eq:Galerkina} with \mbox{right-hand} side generated by $\sigma_f$. Then for every $\epsilon > 0$ there exists a function $\nu_{\ell_{\max}}^{\rm{approx}} \in W^{\ell_{\max}}$ and a natural number $R_{\epsilon}>0$ that depends only on $\epsilon$, the dielectric constants, the radii of the spheres, and the minimum inter-sphere separation distance such that at most $R_{\epsilon}$ iterations of GMRES are required to compute $\nu_{\ell_{\max}}^{\rm{approx}} $ and such that the following error estimate holds
	\begin{align*}
	\frac{||| \nu_{\ell_{\max}}^{\rm{approx}}  - \nu_{\ell_{\max}}|||^*}{||| \mathbb{Q}_0^{\perp}\nu_{\ell_{\max}}|||^*+ \frac{4\pi}{\kappa_0}||| \mathbb{Q}_0^{\perp}\mathbb{Q}_{\ell_{\max}}\sigma_f|||^*}\,  < \epsilon.
	\end{align*}
\end{theorem}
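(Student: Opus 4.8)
The plan is to recast the Galerkin problem \eqref{eq:Galerkina} as an invertible linear operator equation on the finite-dimensional space $W^{\ell_{\max}}$, to identify an inner product on $W^{\ell_{\max}}$ with respect to which the associated system operator is coercive and bounded with constants independent of both $N$ and $\ell_{\max}$, and then to feed these two bounds into the classical Elman-type residual estimate for GMRES, whose contraction factor is consequently also independent of $N$ and $\ell_{\max}$. A preliminary reduction that peels off the explicitly computable piecewise-constant component of the solution is what brings the right-hand side of the error estimate into the stated form involving $\mathbb{Q}_0^\perp$.

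First I would introduce the operator $\mathcal{A}_{\ell_{\max}} := \mathrm{Id} - \mathbb{Q}_{\ell_{\max}}\big(\frac{\kappa_0-\kappa}{\kappa_0}\,\mathrm{DtN}\,\mathcal{V}\big)\big|_{W^{\ell_{\max}}}\colon W^{\ell_{\max}}\to W^{\ell_{\max}}$ and the datum $g_{\ell_{\max}}:=\frac{4\pi}{\kappa_0}\mathbb{Q}_{\ell_{\max}}\sigma_f$, so that \eqref{eq:Galerkina} reads $\mathcal{A}_{\ell_{\max}}\nu_{\ell_{\max}}=g_{\ell_{\max}}$. Since the interior Neumann trace of a harmonic function has zero flux through each sphere, $\mathrm{DtN}$ maps into $\breve{H}^{-\frac12}(\partial\Omega)$, and, crucially using that $\kappa$ is constant on each $\partial\Omega_i$, multiplication by $\frac{\kappa_0-\kappa}{\kappa_0}$ preserves $\breve{H}^{-\frac12}(\partial\Omega)$; hence $\mathcal{A}_{\ell_{\max}}$ leaves $\breve{W}^{\ell_{\max}}:=W^{\ell_{\max}}\cap\breve{H}^{-\frac12}(\partial\Omega)$ invariant and acts as the identity on $\mathcal{C}(\partial\Omega)$. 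Testing \eqref{eq:Galerkina} against $\mathcal{C}(\partial\Omega)\subset W^{\ell_{\max}}$ then gives the closed-form identity $\mathbb{Q}_0\nu_{\ell_{\max}}=\frac{4\pi}{\kappa_0}\mathbb{Q}_0\mathbb{Q}_{\ell_{\max}}\sigma_f$. Accordingly, I would run GMRES with the initial guess $\nu_0:=\frac{4\pi}{\kappa_0}\mathbb{Q}_0\mathbb{Q}_{\ell_{\max}}\sigma_f=\mathbb{Q}_0\nu_{\ell_{\max}}$; then the initial residual equals $\breve{\mathcal{A}}_{\ell_{\max}}\,\mathbb{Q}_0^\perp\nu_{\ell_{\max}}$ with $\breve{\mathcal{A}}_{\ell_{\max}}:=\mathcal{A}_{\ell_{\max}}|_{\breve{W}^{\ell_{\max}}}$, the whole Krylov sequence together with all residual and error vectors stays in $\breve{W}^{\ell_{\max}}$, and it is this restriction of the datum to $\breve{W}^{\ell_{\max}}$ which is why $\mathbb{Q}_0^\perp$ appears in the normalising denominator.

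Next I would equip $W^{\ell_{\max}}$, hence $\breve{W}^{\ell_{\max}}$, with the inner product inducing the $|||\cdot|||^*$ norm. Because $\mathrm{DtN}$, $\mathbb{P}_0$ and the spherical-harmonic structure all act spherewise and diagonally, this inner product is diagonal in the spherical-harmonic basis and $\mathbb{Q}_{\ell_{\max}}$ coincides with the corresponding orthogonal projection, namely truncation in degree. Consequently, for every $\sigma\in\breve{W}^{\ell_{\max}}$ one has $\big(\breve{\mathcal{A}}_{\ell_{\max}}\sigma,\sigma\big)_{|||\cdot|||^*}=\big((\mathrm{Id}-\frac{\kappa_0-\kappa}{\kappa_0}\mathrm{DtN}\,\mathcal{V})\sigma,\sigma\big)_{|||\cdot|||^*}$ and $|||\breve{\mathcal{A}}_{\ell_{\max}}\sigma|||^*\le|||(\mathrm{Id}-\frac{\kappa_0-\kappa}{\kappa_0}\mathrm{DtN}\,\mathcal{V})\sigma|||^*$, so that the numerical range and the norm of $\breve{\mathcal{A}}_{\ell_{\max}}$ are controlled by those of the continuous operator $\mathrm{Id}-\frac{\kappa_0-\kappa}{\kappa_0}\mathrm{DtN}\,\mathcal{V}$ on $\breve{H}^{-\frac12}(\partial\Omega)$. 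I would then appeal to the $N$-robust analysis of \cite{Hassan1} — the $|||\cdot|||/|||\cdot|||^*$ pair, the isomorphism property of $\mathrm{DtN}$ on $\breve{H}^{\pm\frac12}(\partial\Omega)$, the contraction bound \eqref{eq:contraction}, and the uniform geometric bounds \textbf{A1)--A3)} — to produce constants $0<\mu\le M$ depending only on the dielectric constants, the radii and the minimal separation with $\big((\mathrm{Id}-\frac{\kappa_0-\kappa}{\kappa_0}\mathrm{DtN}\,\mathcal{V})\sigma,\sigma\big)_{|||\cdot|||^*}\ge\mu\,|||\sigma|||^{*2}$ and $|||(\mathrm{Id}-\frac{\kappa_0-\kappa}{\kappa_0}\mathrm{DtN}\,\mathcal{V})\sigma|||^*\le M\,|||\sigma|||^*$; these bounds transfer verbatim to $\breve{\mathcal{A}}_{\ell_{\max}}$.

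Finally, with $\mu$ and $M$ fixed, the Elman-type residual estimate yields $|||r_k|||^*\le\rho^k\,|||r_0|||^*$ for the $k$-th GMRES residual $r_k$ measured in $|||\cdot|||^*$, with $\rho:=(1-\mu^2/M^2)^{1/2}\in(0,1)$ independent of $N$ and $\ell_{\max}$; combining $\big\|\breve{\mathcal{A}}_{\ell_{\max}}^{-1}\big\|\le\mu^{-1}$ with $|||r_0|||^*=|||\breve{\mathcal{A}}_{\ell_{\max}}\mathbb{Q}_0^\perp\nu_{\ell_{\max}}|||^*\le M\,|||\mathbb{Q}_0^\perp\nu_{\ell_{\max}}|||^*$ gives $|||\nu_{\ell_{\max}}^{\mathrm{approx}}-\nu_{\ell_{\max}}|||^*\le\mu^{-1}M\,\rho^k\,|||\mathbb{Q}_0^\perp\nu_{\ell_{\max}}|||^*$ for the $k$-th iterate, so the quotient in the statement is below $\mu^{-1}M\,\rho^k$, and I would take $R_\epsilon$ to be the least integer with $\mu^{-1}M\,\rho^{R_\epsilon}<\epsilon$. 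The hard part is the uniform lower bound $\mu$ on the real part of the numerical range: since $\mathcal{V}$ couples all the spheres and does not preserve $W^{\ell_{\max}}$ there is no spherewise argument, and for large dielectric contrast the operator $\frac{\kappa_0-\kappa}{\kappa_0}\mathrm{DtN}\,\mathcal{V}$ need not have norm below one, so one must genuinely exploit the $N$-robust coercivity machinery of \cite{Hassan1} — and, should that reference phrase well-posedness as an inf-sup condition rather than as a field-of-values bound, additionally re-establish the coercivity directly in the $|||\cdot|||^*$ inner product using \eqref{eq:contraction} together with the block structure that isolates the constant modes.
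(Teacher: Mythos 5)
First, a point of order: this paper does not prove Theorem \ref{thm:iterations} at all --- it is imported verbatim from \cite[Theorem 3.12]{Hassan2} as a ``previous result'', so there is no in-paper proof to compare yours against. Judged on its own terms, your skeleton is reasonable and close in spirit to the cited analysis: testing \eqref{eq:Galerkina} with piecewise constants does give $\mathbb{Q}_0\nu_{\ell_{\max}}=\frac{4\pi}{\kappa_0}\mathbb{Q}_0\mathbb{Q}_{\ell_{\max}}\sigma_f$ (although your phrase that $\mathcal{A}_{\ell_{\max}}$ ``acts as the identity on $\mathcal{C}(\partial\Omega)$'' is not literally correct --- constants are mapped to constants plus a zero-mean part; what actually holds, and is all you use, is $\mathbb{Q}_0\mathcal{A}_{\ell_{\max}}=\mathbb{Q}_0$); the invariance of $\breve{W}^{\ell_{\max}}$ and hence the confinement of the Krylov space with your initial guess is right; $\mathbb{Q}_{\ell_{\max}}$ is indeed the orthogonal projection for any inner product diagonal in the local spherical-harmonic basis, so your transfer of field-of-values bounds to the discrete operator is sound; and bounding the error by $|||\mathbb{Q}_0^{\perp}\nu_{\ell_{\max}}|||^*$ alone implies the stated estimate since the stated denominator is larger.

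The genuine gap is exactly the step you yourself flag as the hard part: the $N$-uniform lower bound $\mu$ on the real field of values of $\mathrm{Id}-\frac{\kappa_0-\kappa}{\kappa_0}\mathrm{DtN}\,\mathcal{V}$ on $\breve{H}^{-\frac{1}{2}}(\partial\Omega)$ in the $|||\cdot|||^*$ inner product. This is not supplied by Lemma \ref{lem:single}: writing $\sigma=\mathrm{DtN}\lambda$ with $\lambda\in\breve{H}^{\frac{1}{2}}(\partial\Omega)$, the coercivity you need amounts to a strict, $N$-independent contraction estimate of the form $\langle \mathrm{DtN}\lambda,\mathcal{V}\mathrm{DtN}\lambda\rangle_{\partial\Omega}\le\theta\,\langle\mathrm{DtN}\lambda,\lambda\rangle_{\partial\Omega}$ with $\theta<1$ (suitably weighted by the dielectric factors), whereas \eqref{eq:contraction} only bounds the left-hand side by $\langle\mathcal{V}^{-1}\lambda,\lambda\rangle_{\partial\Omega}$, which dominates $\langle\mathrm{DtN}\lambda,\lambda\rangle_{\partial\Omega}$ and therefore yields nothing strict. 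Moreover, when some $\kappa_i<\kappa_0<\kappa_j$ the factor $\frac{\kappa_0-\kappa}{\kappa_0}$ changes sign from sphere to sphere and the localized terms $\langle \mathrm{DtN}\big(\mathcal{V}\mathrm{DtN}\lambda\big),\lambda\rangle_{\partial\Omega_i}$ are not sign-definite, so you cannot simply pull out $\max_i\vert\kappa_0-\kappa_i\vert/\kappa_0$; controlling large and mixed dielectric contrast uniformly in $N$ is precisely the technical core of \cite{Hassan1,Hassan2}, not a corollary of assumptions \textbf{A1)-A3)} plus Lemma \ref{lem:single}. As written, your argument establishes the correct and standard reduction ``$N$-independent field-of-values bounds imply an $N$-independent Elman contraction factor and iteration count'', but it delegates the decisive estimate to the very reference whose theorem is being proved. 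Two smaller points: you should state explicitly that GMRES is run in the $|||\cdot|||^*$ inner product (otherwise pass to the Euclidean residual via norm-equivalence constants, which are $N$-independent but $\ell_{\max}$- and radius-dependent), and you should note that your initial guess $\nu_0=\frac{4\pi}{\kappa_0}\mathbb{Q}_0\mathbb{Q}_{\ell_{\max}}\sigma_f$ is computable directly from $\sigma_f$, so it does not hide any additional solves.
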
	

Theorem \ref{thm:iterations} shows that for any geometry in the family of geometries $\{\Omega_{\mathcal{F}}\}_{\mathcal{F}\in \mathcal{I}}$ satisfying assumptions {\textbf{A1)-A3)}}, it is possible to compute approximations to the discrete induced surface charge $\nu_{\ell_{\max}} \in W^{\ell_{\max}}$ up to any given relative error tolerance using a number of linear solver iterations that is independent of $N_{\mathcal{F}}$. As discussed in detail in the contributions \cite{Hassan2,lindgren2018}, the matrix vector products required by the linear solver can be done in $\mathcal{O}(N)$ using the fast multipole method (see e.g., \cite{cheng1999fast,greengard1990numerical,greengard1}). Consequently, given any geometry $\Omega_{\mathcal{F}}$ from a family of geometrical configurations $\{\Omega_{\mathcal{F}}\}_{\mathcal{F}\in \mathcal{I}}$ satisfying assumptions {\textbf{A1)-A3)}}, Theorem \ref{thm:iterations} implies that only $\mathcal{O}(N_{\mathcal{F}})$ computations are required to approximate $\nu_{\ell_{\max}}$ up to any given relative error tolerance. We remark that a closed form expression of the natural number $R_{\epsilon}$ can be found in \cite[Theorem 3.12]{Hassan2}.

\section{Definition of the Exact and Approximate Electrostatic Forces}\label{sec:3}

There are at least two approaches to defining the electrostatic forces in non-relativistic settings, one popular in the computational chemistry community and the other originating in the physics literature: Chemists tend to view the total electrostatic energy as the fundamental quantity of interest and define the electrostatic forces as functions of this energy (see, e.g., \cite{gatto2017computation,lipparini2014quantum,lu2005computation,sagui1999molecular}). Physicists on the other hand usually view Maxwell's equations for the electric and magnetic fields as the starting point of any study of electromagnetic phenomena (see, e.g., \cite{feynman,griffiths_2017,panofsky}). In this formalism, the electromagnetic force is given by the so-called Lorentz force law (see, e.g., \cite[Chapter 27]{feynman} or \cite[Chapter 10]{panofsky}) which defines the force in terms of the \emph{electric and magnetic fields}. Naturally, in the absence of electrodynamic effects and magnetic fields, the electrostatic force is defined purely in terms of the electric field.

Although both definitions seemingly arise from different physical considerations, it is possible to show that they are in fact equivalent (see Appendix \ref{sec:appendix}). For the purpose of numerical analysis however, it is advantageous to use the electric-field based formalism to define the forces, and this is the convention we adopt. In the sequel, we assume the setting of Section \ref{sec:2} and we assume that each spherical dielectric particle is a uniform rigid body so that we need consider only the \emph{net} force acting on each spherical particle.





\begin{definition}[Electric Potential]\label{def:pot}~
	
	Let $\sigma \in H^{-\frac{1}{2}}(\partial \Omega)$ be a charge distribution supported on the boundary $\partial \Omega$ of the collection of open balls $\{\Omega_i\}_{i=1}^N$ and let $\mathcal{S} \colon H^{-\frac{1}{2}}(\partial \Omega) \rightarrow H^1_{\rm loc}(\R^3)$ be the single layer potential . Then we define the function $\phi \in H^1_{\rm loc}(\mathbb{R}^3)$ as
	\begin{align*}
	\phi:=\mathcal{S} \sigma,
	\end{align*}
	and we say that $\phi$ is the electric potential produced by the charge distribution $\sigma$.
\end{definition}

Note that the potential $\phi$ is typically the quantity of interest if one formulates the dielectric spheres electrostatic interaction problem, i.e., the BIE \eqref{eq:3.3a} as a PDE-based transmission problem (see, e.g., \cite{Hassan1,lindgren2018}). Furthermore, recalling the definition of the single layer boundary operator $\mathcal{V}$ from Section \ref{sec:2a}, we observe that the interior and exterior Dirichlet traces $\gamma^{\pm} \mathcal{S}\sigma= \mathcal{V}\sigma$ are well-defined.

\begin{definition}[Electric Field]\label{def:field}~
	
	Let $\sigma \in H^{-\frac{1}{2}}(\partial \Omega)$ be a charge distribution supported on the boundary $\partial \Omega$ of the collection of open balls $\{\Omega_i\}_{i=1}^N$, and let $\phi \in H_{\rm loc}^1(\mathbb{R}^3)$ be the electric potential produced by the charge distribution $\sigma$. Then we define the vector field $\boldsymbol{E} \in L^2_{\rm loc}(\mathbb{R}^3; \mathbb{R}^3)$ as
	\begin{align*}
	\boldsymbol{E}:= -\nabla \phi,
	\end{align*}
	and we say that $\boldsymbol{E}$ is the electric field produced by the charge distribution $\sigma$. Here, $\nabla$ denotes the usual gradient in cartesian coordinates.
\end{definition}

\begin{remark}
	Consider the setting of Definition \ref{def:field}. Since the electric potential $\phi$ is harmonic on the complement of the boundary $\partial \Omega$, it is in fact smooth on $\mathbb{R}^3 \setminus \partial \Omega$. Consequently, the electric field $\boldsymbol{E}$ is point-wise infinitely differentiable at any $\bold{x} \in \mathbb{R}^3 \setminus \partial \Omega$.
\end{remark}

Let now $i \in \{1, \ldots, N\}$. It will be important to consider also the electric field generated by a charge distribution supported only on the collection of spheres $\{\partial \Omega_j\}_{\substack{j=1, \\ j \neq i}}^N$, i.e., \emph{excluding the sphere $\partial \Omega_i$}. To this end, we first introduce some simplifying notation.\\

\noindent {\textbf{Notation}: } Let $i \in \{1, \ldots, N\}$. We define the set $\partial \omega_i \subset \partial \Omega$  as $\partial \omega_i:= \partial \Omega \setminus \partial \Omega_i$. In other words $\partial \omega_i$ is the boundary of the collection of open balls $\{\Omega_j\}_{\substack{j=1, \\ j \neq i}}^N$, i.e., \emph{excluding the open ball $\Omega_i$}.

\begin{definition}[Excluded Electric Potentials and Fields]\label{def:field2}~
	
	Let $\sigma \in H^{-\frac{1}{2}}(\partial \Omega)$ be a charge distribution supported on the boundary $\partial \Omega$ of the collection of open balls $\{\Omega_i\}_{i=1}^N$. Then for each $i \in \{1, \ldots, N\}$ 
	\begin{itemize}
		\item We define ${\sigma}_{i,{\rm exc}} \in H^{-\frac{1}{2}}(\partial \Omega)$ as 
		\begin{equation*}
		{\sigma}_{i,{\rm exc}}:= \begin{cases}
		\sigma \qquad &\text{on } \partial \omega_i,\\
		0\qquad &\text{on } \partial \Omega_i,
		\end{cases}
		\end{equation*}
		and we say that ${\sigma}_{i, {\rm exc}}$ is the $i$ excluded charge distribution;
		
		\item We define the function $\phi_{i,{\rm exc}} \in H^1_{\rm loc}(\mathbb{R}^3)$ as 
		\begin{equation*}
		\phi_{i,{\rm exc}} := \mathcal{S}\sigma_{i,{\rm exc}},
		\end{equation*}
		and we say that $\phi_{i,{\rm exc}}$ is the $i$ excluded electric potential generated by $\sigma$;
		
		\item We define the vector field $\boldsymbol{E}_i \in L_{\rm loc}^2(\mathbb{R}^3; \mathbb{R}^3)$ as
		\begin{equation}
		\boldsymbol{E}_{i}:= -\nabla \phi_{i,{\rm exc}},
		\end{equation}
		and we say that $\boldsymbol{E}_i$ is the $i$ excluded electric field generated by $\sigma$.
	\end{itemize} 
\end{definition}

Two remarks are now in order.

\begin{remark}\label{rem:Field_interpretation}
	Consider the setting of Definition \ref{def:field2}. The vector field $\boldsymbol{E}_i$, i.e., the $i$ excluded electric field generated by $\sigma$ has a physical interpretation. This is precisely the part of the total electric field that interacts with (i.e., exerts a net electrostatic force on) the charge distribution on the spherical dielectric particle represented by $\Omega_i$. 
\end{remark}

\begin{remark}\label{rem:Field1}
	Consider the setting of Definition \ref{def:field2} and let $i \in \{1, \ldots, N\}$. We observe that the $i$ excluded potential $\phi_{i,\rm{exc}}$ is harmonic and therefore smooth on the complement of  $\partial \omega_i$. This implies that the $\phi_{i,\rm{exc}}$ is smooth on the boundary $\partial \Omega_i$, i.e., on the surface of the $i^{\rm th}$ spherical dielectric particle. Consequently, the $i$ excluded electric field $\boldsymbol{E}_i$ is also smooth on the boundary $\partial \Omega_i$. 
\end{remark}

We are now ready to define the electrostatic force acting on each spherical dielectric particle. As mentioned previously the force is defined using the well-known Lorentz force law.

\begin{definition}[Definition of the Forces]\label{def:Force2}~
	
	Let $\sigma_f \in H^{-\frac{1}{2}}(\partial \Omega)$ be a given free charge, let $\nu \in H^{-\frac{1}{2}}(\partial \Omega)$ denote the unique solution to the BIE \eqref{eq:3.3a} with right-hand side generated by $\sigma_f$, and for each $i \in \{1, \ldots, N\}$ let $\boldsymbol{E}_{i} \in L_{\rm loc}^2(\mathbb{R}^3; \mathbb{R}^3)$ denote the $i$ excluded electric field generated by $\nu$ as defined through Definition \ref{def:field2}. Then for each $i=1, \ldots, N$ we define the net force acting on the dielectric particle represented by $\Omega_i$ as the vector $\boldsymbol{F}_i \in \mathbb{R}^3$ given by
	\begin{align*}
	\boldsymbol{F}_i:= \kappa_0\int_{\partial \Omega_i} \nu(\bold{x}) \boldsymbol{E}_i(\bold{x})\, d \bold{x}.
	\end{align*}
\end{definition}

\begin{remark}
	Consider Definition \ref{def:Force2} of the electrostatic forces. We remark that since $\nu \in H^{-\frac{1}{2}}(\partial \Omega)$, the integral should be understood as a $\langle \cdot, \cdot \rangle_{H^{-\frac{1}{2}}(\partial \Omega_i) \times H^{\frac{1}{2}}(\partial \Omega_i)}$ duality pairing. In view of Remark \ref{rem:Field1}, the $i$ excluded electric field $\boldsymbol{E}_i$ is smooth on $\partial \Omega_i$ so the duality pairing is well-defined.
\end{remark}


Clearly, Definition \ref{def:Force2} of the electrostatic forces cannot be used for practical computations since it relies on knowledge of exact quantities of interest. Therefore, it is necessary to define \emph{approximate} electrostatic forces using approximate $i$ excluded electric potentials and electric fields for all $i=1, \ldots, N$. 

\begin{definition}[Definition of the Approximate Forces]\label{def:Force2_approx}~
	Let $\sigma_f \in H^{-\frac{1}{2}}(\partial \Omega)$ be a given free charge, let ${\ell_{\max}} \in \mathbb{N}$, let $\nu_{\ell_{\max}} \in W^{\ell_{\max}}$ be the unique solution to the Galerkin discretisation \eqref{eq:Galerkina} with right-hand side generated by $\sigma_f$, and for each $i \in \{1, \ldots, N\}$ let $\boldsymbol{E}^{\ell_{\max}}_{i} \in L_{\rm loc}^2(\mathbb{R}^3; \mathbb{R}^3)$ denote the $i$ excluded electric field generated by $\nu_{\ell_{\max}}$ as defined through Definition \ref{def:field2}. Then for each $i=1, \ldots, N$ we define the approximate net force acting on the dielectric particle represented by $\partial \Omega_i$ as the vector $\boldsymbol{F}^{\ell_{\max}}_i \in \mathbb{R}^3$ given by
	\begin{align}\label{eq:def_force}
	\boldsymbol{F}_i^{\ell_{\max}}:= \kappa_0\int_{\partial \Omega_i} \nu_{\ell_{\max}}(\bold{x}) \boldsymbol{E}^{\ell_{\max}}_i(\bold{x})\, d \bold{x}.
	\end{align}
\end{definition}


\section{Error Analysis for the Electrostatic Forces}\label{sec:4}

Throughout this section we will assume the setting of Section \ref{sec:2}. In order to present a clear and concise exposition, we organise the remainder of this section as follows. In Section \ref{sec:4a}, we state our main results on the approximability and rate of convergence of the approximate electrostatic forces, and we discuss the hypothesis and conclusions of these theorems. In Section \ref{sec:4b}, we state and prove intermediary lemmas which we require for our analysis. These lemmas are then used to prove the main results of Section \ref{sec:4a}.

\subsection{Main Result and Discussion}\label{sec:4a}

\noindent \textbf{Notation:} Given a vector $\mathbb{X} \in \R^3$, we will write $(\mathbb{X})_{\alpha}, ~\alpha =1,2,3$ to denote the components of $\mathbb{X}$. In the same spirit, we will write $\partial_{\alpha} (\cdot), ~\alpha =1,2,3$ for the $\alpha^{\rm th}$ component of the gradient of some scalar field in the cartesian coordinate system.

We begin with a standard result on the approximability of the electrostatic forces. 

\begin{theorem}[Approximability of the Electrostatic Forces]\label{lem:hassan4}~
	
	\noindent Let $s >-\frac{1}{2}$, let $\sigma_f \in H^{s}(\partial \Omega)$ be a given free charge, let $\ell_{\max} \in \mathbb{N}$, let $\nu \in H^{s}(\partial \Omega)$ and $\nu_{\ell_{\max}} \in W^{\ell_{\max}}$ denote the unique solutions to the BIE \eqref{eq:3.3a} and Galerkin discretisation \eqref{eq:Galerkina} with right-hand sides generated by $\sigma_f$, and for each $i \in \{1, \ldots, N\}$ let $\boldsymbol{F}_i, \boldsymbol{F}^{\ell_{\max}}_{i} \in  \mathbb{R}^3$ denote the exact and approximate force acting on the dielectric particle represented by $ \Omega_i$ as defined through Definitions \ref{def:Force2} and \ref{def:Force2_approx} respectively. Then it holds that
	\begin{align*}
	\lim_{\ell_{\max} \to \infty}\sum_{i=1}^N\sum_{\alpha=1}^3 \Big\vert (\boldsymbol{F}_i)_{\alpha} - \big(\boldsymbol{F}_i^{\ell_{\max}}\big)_{\alpha}\Big \vert =0.
	\end{align*}	
\end{theorem}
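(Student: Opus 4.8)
The plan is to reduce the convergence of the forces to the convergence of the induced surface charge, which is guaranteed by Theorem \ref{thm:well-posed}. First I would rewrite each force component as a duality pairing
\begin{align*}
(\boldsymbol{F}_i)_{\alpha} = \kappa_0 \big\langle \nu, -\partial_{\alpha}\phi_{i,\rm exc}\big\rangle_{\partial \Omega_i}, \qquad
\big(\boldsymbol{F}_i^{\ell_{\max}}\big)_{\alpha} = \kappa_0 \big\langle \nu_{\ell_{\max}}, -\partial_{\alpha}\phi_{i,\rm exc}^{\ell_{\max}}\big\rangle_{\partial \Omega_i},
\end{align*}
where $\phi_{i,\rm exc} = \mathcal{S}\nu_{i,\rm exc}$ and $\phi_{i,\rm exc}^{\ell_{\max}} = \mathcal{S}(\nu_{\ell_{\max}})_{i,\rm exc}$. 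Adding and subtracting the mixed term $\big\langle \nu_{\ell_{\max}}, -\partial_{\alpha}\phi_{i,\rm exc}\big\rangle_{\partial \Omega_i}$ splits the error on sphere $i$ into a "charge error" term $\big\langle \nu - \nu_{\ell_{\max}}, -\partial_{\alpha}\phi_{i,\rm exc}\big\rangle_{\partial \Omega_i}$ and a "field error" term $\big\langle \nu_{\ell_{\max}}, -\partial_{\alpha}(\phi_{i,\rm exc} - \phi_{i,\rm exc}^{\ell_{\max}})\big\rangle_{\partial \Omega_i}$. The key structural observation is that the field-error term is itself controlled by $\nu - \nu_{\ell_{\max}}$: since $(\phi_{i,\rm exc} - \phi_{i,\rm exc}^{\ell_{\max}}) = \mathcal{S}\big((\nu - \nu_{\ell_{\max}})_{i,\rm exc}\big)$ and the excluded potential is harmonic (hence smooth) on $\partial \Omega_i$ by Remark \ref{rem:Field1}, its gradient on $\partial \Omega_i$ depends continuously and, crucially, in an $N$-uniform way on the $H^{-\frac12}$-norm (or even a weaker norm) of the excluded charge, by the smoothing/interior-regularity properties of the single layer potential evaluated away from its source.

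Next I would bound each term. For the charge-error term, one uses Cauchy–Schwarz for the duality pairing together with the fact that $\partial_{\alpha}\phi_{i,\rm exc}\in H^{\frac12}(\partial \Omega_i)$ with a bound depending on $\|\nu\|_{H^{-\frac12}(\partial\Omega)}$, the radii, and the separation distance $\epsilon^{\infty}$ — this is where assumptions \textbf{A1}–\textbf{A2} enter, guaranteeing the source $\partial\omega_i$ stays a fixed distance from $\partial\Omega_i$. For the field-error term, $\nu_{\ell_{\max}}$ is bounded uniformly (quasi-optimality from Theorem \ref{thm:well-posed} gives $\|\nu_{\ell_{\max}}\|$ bounded in terms of $\|\nu\|$ and $\|\sigma_f\|$), and the gradient of $\mathcal{S}\big((\nu-\nu_{\ell_{\max}})_{i,\rm exc}\big)$ restricted to $\partial\Omega_i$ is bounded by $C\,\|(\nu-\nu_{\ell_{\max}})_{i,\rm exc}\|_{H^{-\frac12}(\partial\omega_i)} \le C\,\|\nu-\nu_{\ell_{\max}}\|_{H^{-\frac12}(\partial\Omega)}$ with $C$ independent of $N$. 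Summing over $i$ and $\alpha$, and using the norm equivalence between $\|\cdot\|_{H^{-\frac12}(\partial\Omega)}$ and $|||\cdot|||^{*}$, we obtain
\begin{align*}
\sum_{i=1}^N\sum_{\alpha=1}^3 \Big\vert (\boldsymbol{F}_i)_{\alpha} - \big(\boldsymbol{F}_i^{\ell_{\max}}\big)_{\alpha}\Big\vert \le C\, |||\nu - \nu_{\ell_{\max}}|||^{*},
\end{align*}
and Theorem \ref{thm:well-posed} shows the right-hand side vanishes as $\ell_{\max}\to\infty$ since $\sigma_f\in H^s(\partial\Omega)$ forces $\nu\in H^s(\partial\Omega)$, making the regularity-norm factors on the right of that theorem finite.

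The main obstacle I anticipate is making the summation over $i$ genuinely $N$-uniform. Naively, bounding each $\partial_{\alpha}\phi_{i,\rm exc}$ on $\partial\Omega_i$ involves contributions from all $N-1$ other spheres, so one must be careful that the aggregate does not scale like $N$. The resolution is to work with the $H^{-\frac12}$ (dual) norm of the full excluded charge rather than summing pointwise contributions: $\|(\nu-\nu_{\ell_{\max}})_{i,\rm exc}\|_{H^{-\frac12}(\partial\omega_i)}$ is dominated by $\|\nu-\nu_{\ell_{\max}}\|_{H^{-\frac12}(\partial\Omega)}$ because the $H^{-\frac12}(\partial\Omega)$ norm is defined as a sum of local norms over the spheres. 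The subtle point is that the operator mapping a charge on $\partial\omega_i$ to the (trace of the) gradient of its single layer potential on $\partial\Omega_i$ must have operator norm bounded independently of $N$; this should follow from the same type of $N$-independent decay estimates for far-field interactions of the single layer potential used in \cite{Hassan1,Hassan2}, combined with the separation assumption \textbf{A2}. I would isolate this as a preliminary lemma (presumably one of the intermediary lemmas of Section \ref{sec:4b}) and then the theorem follows by the splitting and estimates above.
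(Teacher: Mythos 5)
Your decomposition is exactly the one the paper relies on: the article does not print a proof of Theorem \ref{lem:hassan4} (it defers to the thesis), but in the proof of Theorem \ref{thm:Force} it records the estimate obtained by ``arguing exactly as in the proof of Theorem \ref{lem:hassan4}'', namely the add-and-subtract splitting of each force component into a charge-error term bounded by $|||\nu_i-\nu_i^{\ell_{\max}}|||_i^*\,|||\gamma_i^-(\boldsymbol{E}_i)_\alpha|||_i$ and a field-error term bounded by $|||\nu_i^{\ell_{\max}}|||_i^*\,|||\mathbb{P}_{\ell_{\max},i}\gamma_i^-(\boldsymbol{E}_i-\boldsymbol{E}_i^{\ell_{\max}})_\alpha|||_i$, with the field factors controlled by Lemma \ref{lem:hassan} and the conclusion drawn from Theorem \ref{thm:well-posed}, whose rate $(\max r_j/(\ell_{\max}+1))^{s+\frac12}$ tends to zero since $s+\frac12>0$. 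Your proposal follows the same skeleton; the only genuine difference is that you control the excluded-field factors by interior regularity of the single layer potential away from its source (using \textbf{A1}--\textbf{A2}) rather than by the spherical-harmonics argument of Lemma \ref{lem:hassan}, which is perfectly adequate for the fixed-$N$ limit the theorem asserts.

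The one place you overreach is the displayed inequality $\sum_{i,\alpha}\big\vert(\boldsymbol{F}_i)_\alpha-(\boldsymbol{F}_i^{\ell_{\max}})_\alpha\big\vert\le C\,|||\nu-\nu_{\ell_{\max}}|||^*$ with $C$ claimed independent of $N$. The paper states explicitly, in the remark following the theorem, that the techniques behind Theorem \ref{lem:hassan4} yield pre-factors that \emph{do} depend on $N$; the $N$-independent result is precisely Theorem \ref{thm:Force} and requires $s\ge\frac12$, because the route to uniformity there is the identity $\gamma_i^-\phi_{i,{\rm exc}}=\mathcal{V}\sigma-\mathcal{V}\widetilde{\sigma_i}$ on $\partial\Omega_i$ (with $\widetilde{\sigma_i}$ the charge supported on $\partial\Omega_i$ alone) combined with the global mapping properties of $\mathcal{V}$ as in Corollary \ref{lem:hassan3}, and the local term $\mathcal{V}\widetilde{\sigma_i}$ lies in $H^{\frac32}(\partial\Omega_i)$ only when the charge is in $H^{\frac12}$. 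In your aggregation each local factor (for instance $\Vert\partial_\alpha\phi_{i,{\rm exc}}\Vert_{H^{\frac12}(\partial\Omega_i)}$, or the trace of the field difference) is bounded by the \emph{global} $H^{-\frac12}(\partial\Omega)$ norm of the corresponding excluded charge, so summing over $i$ silently costs a factor of order $\sqrt{N}$ by Cauchy--Schwarz, and an $N$-uniform operator-norm bound for the map from charges on $\partial\omega_i$ to gradient traces on $\partial\Omega_i$ would not by itself remove it. Since the statement only claims the limit for a fixed configuration, this does not affect the validity of your proof; the parenthetical $N$-uniformity should simply be dropped, or deferred to the $s\ge\frac12$ analysis of Theorem \ref{thm:Force}.
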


\begin{remark}
	Consider Theorem \ref{lem:hassan4}. Notice that we require that the free charge $\sigma_f \in H^{s}(\Omega)$ for some $s > -\frac{1}{2}$. This regularity requirement agrees with the convergence rates for the induced surface charge given by Theorem \ref{thm:well-posed} and therefore cannot be improved. Unfortunately, as we will show in Section \ref{sec:4b}, the techniques used to prove this theorem lead to convergence rates with a pre-factor that depends on $N$. 
\end{remark}

The next theorem establishes $N$-independent convergence rates for the electrostatic forces under the assumption of increased regularity for the free charge $\sigma_f$.

\begin{theorem}[$N$-independent Convergence Rates for the Electrostatic Forces]~\label{thm:Force}~
	\noindent Let $s \geq \frac{1}{2}$, let $\sigma_f \in H^{s}(\partial \Omega)$ be a given free charge, let $\ell_{\max} \in \mathbb{N}$,  let $\nu \in H^{s}(\partial \Omega)$ and $\nu_{\ell_{\max}} \in W^{\ell_{\max}}$ be the unique solutions to the BIE \eqref{eq:3.3a} and Galerkin discretisation \eqref{eq:Galerkina} with right-hand sides generated by $\sigma_f$, and for each $i=1, \ldots, N$ let $\boldsymbol{F}_i$ and $\boldsymbol{F}_i^{\ell_{\max}}$ denote the net force and approximate net force as defined through Definitions \ref{def:Force2} and \ref{def:Force2_approx} respectively. Then there exists a constant $C_{\rm Force}>0$ that depends on $s$, the dielectric constants, the radii of the open balls, and the minimum inter-sphere separation distance but is independent of the number $N$ of dielectric particles such that for $\ell_{\max}$ sufficiently large it holds that
	\begin{multline}\label{eq:rates_force}
	\sum_{i=1}^N \sum_{\alpha=1}^3\Big\vert (\boldsymbol{F}_i)_{\alpha} - \big(\boldsymbol{F}_i^{\ell_{\max}}\big)_{\alpha}\Big \vert \leq C_{\rm force} \Big(\frac{\max r_j}{\ell_{\max}+1}\Big)^{-\frac{1}{2}+s}\left(\big|\big|\big|\nu\big|\big|\big|_{H^s(\partial \Omega)}+\big|\big|\big| \mathbb{Q}_{0}^{\perp}\sigma_f\big|\big|\big|_{H^s(\partial \Omega)}\right)^2.
	\end{multline}
\end{theorem}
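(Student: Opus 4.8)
\subsection*{Proof proposal}

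The plan is to bound the total force error by the total error in the induced surface charge, measured in the dual norm $|||\cdot|||^*$ of Definition \ref{def:NewNorm}, together with an $N$-independent control of the electric fields restricted to the spheres that carry the corresponding charges. Before splitting, I would record two structural facts. First, testing the integral equation \eqref{eq:3.3a} and its Galerkin discretisation \eqref{eq:Galerkina} against the $N$-dimensional space $\mathcal{C}(\partial\Omega)\subset W^{\ell_{\max}}$ of per-sphere constants, and using that $\mathcal{S}\nu$ is harmonic inside each $\Omega_i$ (so that $\int_{\partial\Omega_i}\text{DtN}\,\mathcal{V}\nu\,d\bold{x}=\int_{\Omega_i}\Delta\mathcal{S}\nu=0$, and likewise for $\nu_{\ell_{\max}}$), one obtains $\mathbb{Q}_0\nu=\tfrac{4\pi}{\kappa_0}\mathbb{Q}_0\sigma_f=\mathbb{Q}_0\nu_{\ell_{\max}}$; hence the Galerkin error $e:=\nu-\nu_{\ell_{\max}}$ lies in $\breve{H}^{-\frac12}(\partial\Omega)$, i.e.\ it has vanishing mean on every sphere. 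Second, for any mean-zero $\sigma\in\breve{H}^{-\frac12}(\partial\Omega)$ one has the $N$-independent bound $\langle\sigma,\mathcal{V}\sigma\rangle_{\partial\Omega}=\Vert\nabla\mathcal{S}\sigma\Vert^2_{L^2(\mathbb{R}^3)}\le\big(|||\sigma|||^*\big)^2$: writing $\lambda=\mathcal{V}\sigma$ and using $\langle\sigma,v\rangle_{\partial\Omega}=0$ for $v\in\mathcal{C}(\partial\Omega)$, that $\text{DtN}\,\mathcal{V}\sigma=\gamma_N^-\mathcal{S}\sigma\in\breve{H}^{-\frac12}(\partial\Omega)$, and Green's first identity, one gets $\langle\sigma,\mathcal{V}\sigma\rangle_{\partial\Omega}=\langle\sigma,\mathbb{P}_0^\perp\lambda\rangle_{\partial\Omega}\le|||\sigma|||^*\,|||\mathbb{P}_0^\perp\lambda|||$ with $|||\mathbb{P}_0^\perp\lambda|||^2=\langle\text{DtN}\,\mathcal{V}\sigma,\mathcal{V}\sigma\rangle_{\partial\Omega}=\int_{\Omega^-}|\nabla\mathcal{S}\sigma|^2\le\langle\sigma,\mathcal{V}\sigma\rangle_{\partial\Omega}$. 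This energy bound is the mechanism that converts charge errors measured in $|||\cdot|||^*$ into field errors.

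Next I would split, for each $i$ and each component $\alpha=1,2,3$,
\[
(\boldsymbol{F}_i)_\alpha-(\boldsymbol{F}_i^{\ell_{\max}})_\alpha=\kappa_0\big\langle e|_{\partial\Omega_i},(\boldsymbol{E}_i)_\alpha\big\rangle_{\partial\Omega_i}+\kappa_0\big\langle\nu_{\ell_{\max}}|_{\partial\Omega_i},(\boldsymbol{E}_i)_\alpha-(\boldsymbol{E}_i^{\ell_{\max}})_\alpha\big\rangle_{\partial\Omega_i},
\]
and treat the two families separately, in each case passing the sums over $i$ and $\alpha$ through Cauchy--Schwarz applied to a local ``charge'' norm against a local ``field'' norm on $\partial\Omega_i$. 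For the second family, $(\boldsymbol{E}_i)_\alpha-(\boldsymbol{E}_i^{\ell_{\max}})_\alpha=\partial_\alpha\mathcal{S}e_{i,{\rm exc}}$ is harmonic in the ball $\mathcal{B}_{r_i+\epsilon^\infty}(\bold{x}_i)$, because $e_{i,{\rm exc}}$ is supported on $\partial\omega_i$; a Cauchy-type interior estimate for harmonic functions, uniform by Assumptions \textbf{A1)} and \textbf{A2)}, bounds $\Vert\partial_\alpha\mathcal{S}e_{i,{\rm exc}}\Vert_{H^{\frac12}(\partial\Omega_i)}$ by the $L^2$-norm of $\nabla\mathcal{S}e_{i,{\rm exc}}$ on a thin spherical shell $A_i\subset\mathcal{B}_{r_i+\epsilon^\infty/2}(\bold{x}_i)$. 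The shells $A_i$ are pairwise disjoint, and since $\mathcal{S}e_{i,{\rm exc}}=\mathcal{S}e-\mathcal{S}(e|_{\partial\Omega_i})$ on $A_i$, one obtains $\sum_i\Vert(\boldsymbol{E}_i-\boldsymbol{E}_i^{\ell_{\max}})_\alpha\Vert^2_{H^{1/2}(\partial\Omega_i)}\le C\big(\langle e,\mathcal{V}e\rangle_{\partial\Omega}+\sum_i\langle e|_{\partial\Omega_i},\mathcal{V}\,e|_{\partial\Omega_i}\rangle_{\partial\Omega_i}\big)\le C'\big(|||e|||^*\big)^2$ by the energy bound, the boundedness of the single-sphere single layer operators, and the $N$-independent norm equivalences of \cite{Hassan1}. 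Together with $\Vert\nu_{\ell_{\max}}\Vert_{H^{-1/2}(\partial\Omega)}\le C''\Vert\nu\Vert_{H^s(\partial\Omega)}$ (valid once $\ell_{\max}$ is sufficiently large, by Theorem \ref{thm:well-posed}) this family contributes a term of the required form with the ``good'' exponent $s+\tfrac12$.

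The first family is where the difficulty lies, and I expect it to be the main obstacle. One decomposes $\boldsymbol{E}_i$ according to $\nu=\mathbb{Q}_0\nu+\mathbb{Q}_0^\perp\nu$. The contribution of the mean-zero part $\mathbb{Q}_0^\perp\nu$ is treated exactly as the second family above, producing a factor bounded by $\Vert\mathbb{Q}_0^\perp\nu\Vert_{H^s(\partial\Omega)}\le\Vert\nu\Vert_{H^s(\partial\Omega)}$. The remaining piece pairs the mean-zero $e|_{\partial\Omega_i}$ against the field generated by the \emph{per-sphere-constant} charges $(\mathbb{Q}_0\nu)|_{\partial\Omega_j}$, $j\neq i$, which is precisely the Coulomb field of point charges $Q_j=\tfrac{4\pi}{\kappa_0}\int_{\partial\Omega_j}\sigma_f$ located at $\bold{x}_j$; this field does not decay at the rate enjoyed by the mean-zero contributions, so the disjoint-shells argument no longer suffices on its own. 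The strategy is to exploit the vanishing mean of $e|_{\partial\Omega_i}$ to subtract the sphere-average of this field before pairing, reducing matters to its gradient (equivalently, to dipole-type inter-sphere interactions), and then to use the uniform minimal separation of Assumption \textbf{A2)} (together with the bounded radii of \textbf{A1)}) to sum the resulting interactions in an $N$-independent fashion. It is in this step that a half power of $\tfrac{\max r_j}{\ell_{\max}+1}$ is relinquished: the Galerkin error must here be controlled in the stronger $H^{\frac12}$-norm, whose approximation rate is $s-\tfrac12$ rather than $s+\tfrac12$, which is also why the hypothesis $s\ge\tfrac12$ (ensuring $\nu\in H^{\frac12}(\partial\Omega)$) enters. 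These estimates form the technical heart of the argument and would be isolated as the intermediary lemmas of Section \ref{sec:4b}.

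Finally, collecting the two families, using $\Vert e\Vert_{H^{-1/2}(\partial\Omega)}\le C\,|||e|||^*$ and the corresponding bound on the $H^{\frac12}$-norm of the error, and invoking Theorem \ref{thm:well-posed} to estimate these by $\big(\tfrac{\max r_j}{\ell_{\max}+1}\big)^{s+\frac12}$ and $\big(\tfrac{\max r_j}{\ell_{\max}+1}\big)^{s-\frac12}$ times $\big(\Vert\nu\Vert_{H^s(\partial\Omega)}+\Vert\mathbb{Q}_0^\perp\sigma_f\Vert_{H^s(\partial\Omega)}\big)$, yields \eqref{eq:rates_force} with an $N$-independent constant $C_{\rm force}$.
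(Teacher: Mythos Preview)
Your splitting into two families matches the paper's, and your treatment of the second family --- Cauchy interior estimates on enlarged balls combined with the disjoint-region summation via the Coulomb energy identity $\Vert\nabla\mathcal{S}e\Vert_{L^2(\mathbb{R}^3)}^2=\langle e,\mathcal{V}e\rangle\le\big(|||e|||^*\big)^2$ --- is sound and in fact sharper than what the paper does. The paper instead applies estimate~\eqref{eq:Estimate2} of Corollary~\ref{lem:hassan3}, which carries an inverse-inequality factor $(\ell_{\max}+1)$, and this factor is precisely the source of the final rate $s-\tfrac12$. Your route would give the second family the rate $s+\tfrac12$.

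There is, however, a genuine gap in your first family, exactly at the per-sphere-constant piece you flag as the main difficulty. Subtracting the sphere-average of the Coulomb field improves the decay from $|\bold{x}_i-\bold{x}_j|^{-2}$ to $|\bold{x}_i-\bold{x}_j|^{-3}$, but this is still not $N$-independently summable: for sphere centres on a three-dimensional cubic lattice one has $\sum_{j\neq i}|\bold{x}_i-\bold{x}_j|^{-3}\sim\log N$, so minimal separation alone cannot close the estimate, and moving the Galerkin error to the $H^{\frac12}$-norm does not help because it is the \emph{field} side of the pairing that fails to be summable. The paper avoids this entirely by a different mechanism. Lemma~\ref{lem:hassan} bounds the excluded field on $\partial\Omega_i$ by $|||\mathbb{P}^{\perp}_{0,i}\gamma_i^-\phi_{i,{\rm exc}}|||_{H^{3/2}(\partial\Omega_i)}$; Corollary~\ref{lem:hassan3} writes $\gamma_i^-\phi_{i,{\rm exc}}=(\mathcal{V}\nu)|_{\partial\Omega_i}-\mathcal{V}_i\nu_i$; and the decisive step (see~\eqref{eq:simple2}) is to invoke the BIE~\eqref{eq:3.3a} itself to rewrite
\[
\mathbb{P}_0^\perp\mathcal{V}\nu=\frac{\kappa_0}{\kappa_0-\kappa}\,\text{DtN}^{-1}\Big(\mathbb{Q}_0^\perp\nu-\frac{4\pi}{\kappa_0}\mathbb{Q}_0^\perp\sigma_f\Big).
\]
Since $\text{DtN}^{-1}$ acts sphere-by-sphere, this expresses the global potential through purely local quantities involving only $\mathbb{Q}_0^\perp\nu$ and $\mathbb{Q}_0^\perp\sigma_f$; the constant part $\mathbb{Q}_0\nu$ never appears, and the first family achieves the good rate $s+\tfrac12$ --- the opposite of your diagnosis of where the half power is lost.
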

\begin{remark}
	Consider the setting of Theorem \ref{thm:Force}. The dependence of the pre-factor $C_{\rm Force}$ on the regularity index $s \geq \frac{1}{2}$ is an artefact which arises due to our attempt to write the final convergence rates \eqref{eq:rates_force} in a concise and aesthetically appealing form. The proof of Theorem \ref{thm:Force} will show that the dependence of the pre-factor $C_{\rm Force}$ on $s$ can be removed at the cost of obtaining a more tedious final expression.
\end{remark}

Theorem \ref{thm:Force} has the following important (but unsurprising) corollary.

\begin{corollary}[Exponential Convergence of the Electrostatic Forces]\label{thm:exp}~
	\noindent Let $C_{\rm force}$ denote the convergence rate pre-factor in Theorem \ref{thm:Force}, let $\sigma_f \in C^\infty(\partial \Omega)$ be such that the harmonic extension of $\sigma_f$ inside $\Omega^-$ is analytic on $\overline{\Omega^-}$, let $\ell_{\max} \in \mathbb{N}$, let $\nu \in {H}^{-\frac{1}{2}}(\partial \Omega)$ and $\nu_{\ell_{\max}} \in W^{\ell_{\max}}$ be the unique solutions to the BIE \eqref{eq:3.3a} and Galerkin discretisation \eqref{eq:Galerkina} with right-hand sides generated by $\sigma_f$, and for each $i \in \{1, \ldots, N\}$ let $\boldsymbol{F}_i$ and $\boldsymbol{F}_i^{\ell_{\max}}$ denote the exact and approximate electrostatic forces acting on the particle represented by $\Omega_i$ as given by Definitions \ref{def:Force2} and \ref{def:Force2_approx} respectively. For $\ell_{\max}$ sufficiently large, if the harmonic extension of $\nu$ inside $\Omega^-$ is analytic on $\overline{\Omega^-}$ then there exist constants $C^{\alpha}_{\nu, \sigma_f}, C^{\beta}_{\nu, \sigma_f}  > 0$ depending on geometric parameters, the exact solution $\nu$, and the free charge $\sigma_f$ such that
	\begin{align*}
	\frac{1}{{N}}\sum_{i=1}^N\sum_{\alpha=1}^3 \Big\vert (\boldsymbol{F}_i)_{\alpha} - \big(\boldsymbol{F}_i^{\ell_{\max}}\big)_{\alpha}\Big \vert  &\leq C_{\rm force}C^{\alpha}_{\nu, \sigma_f}\exp\left(-C^{\beta}_{\nu, \sigma_f}\sqrt{\frac{\ell_{\max}+1}{\max r_j}}\right).
	\end{align*}
\end{corollary}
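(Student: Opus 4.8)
The plan is to insert the analyticity hypothesis into the $N$-independent algebraic rate of Theorem \ref{thm:Force} and then optimise the resulting estimate over the regularity index $s$. The only genuinely new input is a quantitative control of how the norms $|||\nu|||_{H^s(\partial\Omega)}$ and $|||\mathbb{Q}_0^\perp\sigma_f|||_{H^s(\partial\Omega)}$ grow with $s$; the rest is a calculus optimisation.

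First I would convert analyticity into exponential decay of spherical-harmonic coefficients. Fix $i \in \{1,\dots,N\}$. By hypothesis the harmonic extension of $\nu$ inside $\Omega_i$ is analytic on $\overline{\Omega_i}$, hence extends harmonically to a strictly larger concentric ball $\mathcal{B}_{\rho_i}(\bold{x}_i)$ with $\rho_i > r_i$. Writing its solid-harmonic expansion $\sum_{\ell,m}c^m_\ell\,|\bold{x}-\bold{x}_i|^\ell\,\mathcal{Y}_\ell^m\big((\bold{x}-\bold{x}_i)/|\bold{x}-\bold{x}_i|\big)$ and applying the standard interior estimates for harmonic functions, one obtains $|c^m_\ell|\le M_i\,\rho_i^{-\ell}$ for all $\ell,m$, whence the coefficients of the trace satisfy $|[\nu]^m_\ell| = |c^m_\ell|\,r_i^\ell \le M_i\,q_i^\ell$ with $q_i := r_i/\rho_i \in (0,1)$; the same holds for $\sigma_f$ (and for $\mathbb{Q}_0^\perp\sigma_f$, the projector only modifying the $\ell=0$ coefficient). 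Since the geometry is fixed we may put $q:=\max_i q_i<1$. Using the explicit inner product of Definition \ref{def:7.1} together with $\#\{m:|m|\le\ell\}=2\ell+1$, I would then estimate, for every $s\ge\tfrac12$,
\[
|||\nu|||_{H^s(\partial\Omega_i)}^2 \;\le\; C\,M_i^2\Big(1+\sum_{\ell\ge1}\ell^{2s+1}q^{2\ell}\Big) \;\le\; C'\,M_i^2\,\frac{\Gamma(2s+2)}{\big(\log(1/q)\big)^{2s+2}},
\]
with $C,C'$ depending only on the (fixed) radii, and hence, by Stirling's formula and after summing the $N$ local contributions (the analogous bound holding for $\mathbb{Q}_0^\perp\sigma_f$), conclude that there are constants $B_0,B_1>0$ depending on $q<1$, on $\max_j r_j$, and on $\nu,\sigma_f$ but \emph{not} on $s$ with
\[
\big(|||\nu|||_{H^s(\partial\Omega)}+|||\mathbb{Q}_0^\perp\sigma_f|||_{H^s(\partial\Omega)}\big)^2 \;\le\; N\,B_0\,(B_1\,s)^{2s}\qquad\text{for all }s\ge\tfrac12 .
\]

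Next I would substitute this bound into \eqref{eq:rates_force}, divide by $N$ — which cancels the factor $N$ just produced — and abbreviate $h := \max_j r_j/(\ell_{\max}+1)$, obtaining
\[
\frac{1}{N}\sum_{i=1}^N\sum_{\alpha=1}^3\Big|(\boldsymbol{F}_i)_\alpha-\big(\boldsymbol{F}_i^{\ell_{\max}}\big)_\alpha\Big| \;\le\; C_{\rm force}\,B_0\,h^{-1/2}\,\big(B_1\,h^{1/2}\,s\big)^{2s}.
\]
Finally I would optimise in $s$ by taking $s=s(\ell_{\max}):=(eB_1)^{-1}h^{-1/2}$, which is $\ge\tfrac12$ and lies in the admissible range of Theorem \ref{thm:Force} once $\ell_{\max}$ is large enough (the threshold there depending only on geometric data, not on $s$). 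This makes $\big(B_1 h^{1/2}s\big)^{2s}=e^{-2s}=\exp\big(-\tfrac{2}{eB_1}h^{-1/2}\big)$, and the residual prefactor $h^{-1/2}=\sqrt{(\ell_{\max}+1)/\max_j r_j}$ is absorbed into the exponential at the price of replacing $\tfrac{2}{eB_1}$ by $\tfrac{1}{eB_1}$, which yields precisely the asserted inequality with $C^\alpha_{\nu,\sigma_f}$, $C^\beta_{\nu,\sigma_f}$ read off from $B_0$, $B_1$ and $\max_j r_j$.

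The heart of the argument, and the step needing the most care, is the quantitative passage from "analytic on $\overline{\Omega^-}$" to the bound $|||\nu|||_{H^s(\partial\Omega_i)}\le\widehat C_0(\widehat C_1 s)^s$ that is uniform in $s$: one has to extract an explicit sub-unit ratio $q_i=r_i/\rho_i$ from the radius of convergence of the solid-harmonic series and then bound the $s$-dependence of $\sum_\ell\ell^{2s+1}q_i^{2\ell}$ by a $\Gamma$-function/Stirling estimate (absorbing polynomial-in-$s$ factors into $\widehat C_1$). Once this is available, the substitution into Theorem \ref{thm:Force}, the choice of $s(\ell_{\max})$, and the absorption of the polynomial prefactor are routine.
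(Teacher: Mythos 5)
Your proposal is correct and takes essentially the same route as the paper: the paper's proof of Corollary \ref{thm:exp} is precisely the standard spectral-Galerkin argument (deferred to \cite[Theorem 2.24]{Hassan1}) in which analyticity of the harmonic extensions gives geometric decay of the local spherical-harmonic coefficients, hence $(C s)^{2s}$-type growth of the $H^s(\partial\Omega)$ norms of $\nu$ and $\sigma_f$, after which one inserts this into the algebraic rate \eqref{eq:rates_force} of Theorem \ref{thm:Force} and optimises $s \sim \sqrt{(\ell_{\max}+1)/\max r_j}$, absorbing the polynomial prefactor into the exponential. The only point to make explicit in a full write-up is that the prefactor $C_{\rm force}$ of Theorem \ref{thm:Force} itself carries a mild $s$-dependence of the form $(\max r_i)^{s+\frac{3}{2}}$, which—exactly like the factors $r^{-2s}$ and $\big(\log(1/q)\big)^{-(2s+2)}$ you already absorb—is of geometric type $A^{2s}$ and can be folded into your constant $B_1$ before the optimisation, as the remark following Theorem \ref{thm:Force} anticipates.
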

\begin{proof}
	The proof of Corollary \ref{thm:exp} uses the convergence rates obtained from Theorem \ref{thm:Force} and is a standard exercise in the analysis of spectral Galerkin methods (see, e.g., the arguments in \cite{houston2000stabilized,houston2001hp}). For more details, we refer the interested reader to the proof of \cite[Theorem 2.24]{Hassan1} which establishes the exponential convergence of the approximate induced surface charge and can be copied nearly word for word to prove Corollary \ref{thm:exp}.
\end{proof}

Next, we would like to discuss in more detail the hypothesis and conclusions of Theorem \ref{thm:Force}. We frame this discussion in the form of two remarks.

{ \begin{remark}[\textit{Scaling of the Error Estimates in Theorem} \ref{thm:Force}]\label{rem:thm1}~
		
		\noindent	As mentioned in the introduction, the goal of this work is to show that, under suitable geometric assumptions, the Galerkin method proposed through the BIE \eqref{eq:3.3a} and the discretisation \eqref{eq:Galerkina} can be used to obtain the electrostatic forces with \emph{linear scaling accuracy}. A necessary condition to achieve this is to show that the approximate forces we compute are $N$-error stable, i.e., for a fixed number of degrees of freedom per sphere, the relative or average error in the approximate approximate forces for different geometrical configurations does not increase with the number $N$ of dielectric particles. In the earlier contribution \cite{Hassan1}, we showed precisely this result for the approximate induced surface charges induced on any family of geometries $\{\Omega_{\mathcal{F}}\}_{\mathcal{F} \in \mathcal{I}}$ satisfying assumptions \textbf{A1)-A3)}.
		
		Consider Theorem \ref{thm:Force}. Since the constant $C_{\rm force}>0$ does not explicitly depend on $N$, we can deduce two conclusions from the convergence rate \eqref{eq:rates_force}:
		
		\begin{itemize}
			\item[C1)] It holds that
			\begin{align*}
			\frac{\sum_{i=1}^N \sum_{\alpha=1}^3\Big\vert (\boldsymbol{F}_i)_{\alpha} - \big(\boldsymbol{F}_i^{\ell_{\max}}\big)_{\alpha}\Big \vert}{ \left(\Vert \nu\Vert_{H^s(\partial \Omega)}+\Vert \mathbb{Q}_{0}^{\perp}\sigma_f\Vert_{H^s(\partial \Omega)}\right)^2} \leq \Big(\frac{\max r_j}{\ell_{\max}+1}\Big)^{-\frac{1}{2}+s} C_{\rm force},
			\end{align*}
			with right-hand side that is independent of $N$. Consequently, for any family of geometries $\{\Omega_{\mathcal{F}}\}_{\mathcal{F} \in \mathcal{I}}$ satisfying assumptions \textbf{A1)-A3)}, the approximate forces-- relative to the sum squared of the exact induced surface charge $\nu$ and free charge $\sigma_f$-- are indeed \underline{independent of $N_{\mathcal{F}}$}. Here, $N_{\mathcal{F}}$ denotes the number of particles in an arbitrary geometry $\Omega_{\mathcal{F}}$ and takes the role of $N$ above. Unfortunately, we have been unable to obtain such a result for the error in the approximate forces relative to the exact force.

			\item[C2)] Let $i \in \{1, \ldots, N\}$ and let $\nu_i:= \nu \vert_{\partial \Omega_i}$ and $\sigma_{f, i}:= \sigma_f \vert_{\partial \Omega_i}$. If the induced surface charge $\nu$ and free charge $\sigma_f$ are both of order 1 on each sphere, i.e., if $|||\nu_i|||_{H^s(\partial \Omega_i)},~ |||\sigma_{f, i}|||_{H^s(\partial \Omega_i)}= \mathcal{O}(1)$ then it holds that
			\begin{align*}
			\left(\big|\big|\big|\nu\big|\big|\big|_{H^s(\partial \Omega)}+\big|\big|\big| \mathbb{Q}_{0}^{\perp}\sigma_f\big|\big|\big|_{H^s(\partial \Omega)}\right)^2= \mathcal{O}(N),
			\end{align*}
			and it therefore holds that
			\begin{align*}
			\frac{1}{{N}}\sum_{i=1}^N\sum_{\alpha=1}^3 \Big\vert (\boldsymbol{F}_i)_{\alpha} - \big(\boldsymbol{F}_i^{\ell_{\max}}\big)_{\alpha}\Big \vert  = \mathcal{O}(1).
			\end{align*}
			
			In other words, for any family of geometries $\{\Omega_{\mathcal{F}}\}_{\mathcal{F} \in \mathcal{I}}$ satisfying assumptions \textbf{A1)-A3)}, if the discretisation parameter $\ell_{\max}$ is fixed and the induced surface charge $\nu$ and free charge $\sigma_f$ are both $\mathcal{O}(1)$ on each sphere, then the average error in the approximate forces does not increase for increasing $N_{\mathcal{F}}$.
		\end{itemize}
	\end{remark}
}
\begin{remark}[\textit{Assumptions of Theorem} \ref{thm:Force}]\label{rem:thm2}~
	
	\noindent Consider once again Theorem \ref{thm:Force}. Notice that we require that the free charge $\sigma_f \in H^s(\partial \Omega)$ for $s > \frac{1}{2}$. This is in contrast to Theorem \ref{lem:hassan4}, which establishes approximability of the forces even if $-\frac{1}{2} < s < \frac{1}{2} $. Although suboptimal from a mathematical perspective, this additional regularity assumption does not preclude us from using these error estimates in most practical situations. This is due to the fact that in many physical applications, the open balls $\{\Omega_i\}_{i=1}^N$ represent \emph{homogenous} dielectric particles, and for homogenous particles, physical arguments imply that the free charge $\sigma_f$ must be distributed uniformly on each sphere $\partial \Omega_i$. In other words, for a variety of physical applications (see, e.g., \cite{lindgren2019theoretical,lindgren2018,Titan,lindgren2018dynamic}), we have $\sigma_f \in C^{\infty}(\partial \Omega)$ which in turn means that the convergence rates \eqref{eq:rates_force} for the electrostatic forces are valid for all $s > \frac{1}{2}$. 
\end{remark}

\subsection{Auxiliary Lemmas and Proofs of the Main Results}\label{sec:4b}

{To aid the analysis of this section, we first introduce some additional notation. Essentially, we wish to introduce local versions of the projection operators (Definition \ref{def:PQ}), norms (Definition \ref{def:NewNorm}), and the trace operator on each sphere. \\
	
	\noindent \textbf{Notation:} 
	
	\begin{itemize}
		\item Let $i \in \{1, \ldots, N\}$ and $\ell \in \mathbb{N}_0$. We define the projection operator $\mathbb{P}_{i, \ell} \colon$ $H^{\frac{1}{2}}(\partial \Omega_i) \rightarrow W^{\ell}(\partial \Omega_i)$ as the mapping with the property that for any $\psi \in H^{\frac{1}{2}}(\partial \Omega)$,  $\mathbb{P}_{i, \ell} \psi $ is the unique element of $W^{\ell}(\partial \Omega_i)$ satisfying
		\begin{align*}
		\left(\phi_{\ell_{\max}}, \mathbb{P}_{i, \ell} \psi\right)_{L^2(\partial \Omega)}=\left\langle \phi_{\ell_{\max}}, \psi\right\rangle_{\partial \Omega_i} \qquad \forall \phi_{\ell_{\max}} \in W^{\ell}(\partial \Omega_i),
		\end{align*}
		
		Similarly, we define the projection operator  $\mathbb{Q}_{i, \ell} \colon H^{-\frac{1}{2}}(\partial \Omega_i) \rightarrow W^{\ell}(\partial \Omega_i)$ as the mapping with the property that for any $\sigma \in H^{-\frac{1}{2}}(\partial \Omega_i)$,  $\mathbb{Q}_{i, \ell} \sigma $ is the unique element of $W^{\ell_{\max}}(\partial \Omega)$ satisfying
		\begin{align*}
		\left(\mathbb{Q}_{i, \ell} \sigma, \phi_{\ell_{\max}}\right)_{L^2(\partial \Omega)}&=\left\langle \sigma, \phi_{\ell_{\max}}\right\rangle_{\partial \Omega_i} \qquad \forall \phi_{\ell_{\max}} \in W^{\ell}(\partial \Omega_i).
		\end{align*}

		\item Let $i \in \{1, \ldots, N\}$ and $\ell \in \mathbb{N}_0$. We define the projection operators $\mathbb{P}_{i, \ell}^{\perp} \colon$ $H^{\frac{1}{2}}(\partial \Omega_i)\rightarrow \big(W^{\ell_{\max}}(\partial \Omega_i)\big)^{\perp}$ and $\mathbb{Q}_{i, \ell}^{\perp} \colon H^{-\frac{1}{2}}(\partial \Omega_i)\rightarrow \big(W^{\ell_{\max}}(\partial \Omega_i)\big)^{\perp}$ as \newline $\mathbb{P}_{i, \ell}^{\perp}:= \mathbb{I} - \mathbb{P}_{i, \ell}$ and $\mathbb{Q}_{i, \ell}^{\perp} := \mathbb{I} - \mathbb{Q}_{i, \ell}$ where $\mathbb{I}$ denotes the identity operator on the relevant trace space. 
		
		\item Furthermore, for all $i \in \{1, \ldots, N\}$ and $\lambda_i \in H^{\frac{1}{2}}(\partial \Omega_i)$ we define
		\begin{align*}
		|||\lambda_i|||^2_i:=& \Vert \mathbb{P}_{i, 0}\lambda_i\Vert^2_{L^2(\partial \Omega_i)} + \langle \text{DtN}\lambda_i, \lambda_i\rangle_{\partial \Omega_i}.
		\end{align*}	
		
		\item In addition, for all $i \in \{1, \ldots, N\}$ and $\sigma_i \in \breve{H}^{-\frac{1}{2}}(\partial \Omega_i)$ we define
		\begin{align*}
		|||\sigma_i|||_i^*:=& |||\text{DtN}_i^{-1}\sigma_i|||_i,
		\end{align*}
		where the mapping $\text{DtN}_i^{-1} \colon \breve{H}^{-\frac{1}{2}}(\partial \Omega_i) \rightarrow \breve{H}^{\frac{1}{2}}(\partial \Omega_i)$ is the inverse of the Dirichlet-to-Neumann map on $\partial \Omega_i$.
		
		\item Moreover, for all $i \in \{1, \ldots, N\}$ we denote by $\gamma^-_i \colon H^1(\Omega_i) \rightarrow H^{\frac{1}{2}}(\partial \Omega_i)$ the interior Dirichlet trace operator on the open ball $\Omega_i$.
	\end{itemize}		
}

\begin{lemma}[Estimates for the Excluded Electric Field]\label{lem:hassan}~
	\noindent 	Let $\sigma \in H^{-\frac{1}{2}}(\partial \Omega)$ be a given charge distribution and for each $i \in \{1, \ldots, N\}$ let $\phi_{i,{\rm exc}} \in H_{\rm loc}^1(\mathbb{R}^3)$ and $\boldsymbol{E}_{i} \in L^2_{\rm loc}(\mathbb{R}^3; \mathbb{R}^3)$ denote, respectively, the $i$ excluded electric potential and electric field generated by $\sigma$ as defined through Definition \ref{def:field2}. Then for all $i \in \{1, \ldots, N\}$ there exists a constant $\widetilde{C_{r_i}}> 0$ that depends only on the radius $r_i$ of the open ball $\Omega_i$ such that for each $\alpha=1,2,3$ it holds that
	\begin{subequations}
		\begin{align}\label{eq:New_Ben_1}
		|||\gamma^-_i \big(\boldsymbol{E}_{i}\big)_{\alpha}   |||_i^2 &\leq \widetilde{C_{r_i}} \big|\big|\big|\mathbb{P}^{\perp}_{0, i}\gamma_i^- \phi_{i,{\rm exc}}\big|\big|\big|^2_{H^{\frac{3}{2}}(\partial \Omega_i)}\\ \label{eq:New_Ben_2}
		|||\mathbb{P}_{\ell_{\max}, i}\gamma^-_i \big(\boldsymbol{E}_{i}\big)_{\alpha}   |||_i^2 &\leq \widetilde{C_{r_i}} \big|\big|\big|\mathbb{P}_{\ell_{\max}+1, i}\mathbb{P}_0^{\perp}\gamma_i^- \phi_{i,{\rm exc}}\big|\big|\big|^2_{H^{\frac{3}{2}}(\partial \Omega_i)}.
		\end{align}
	\end{subequations}
\end{lemma}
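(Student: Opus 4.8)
The key observation is that the $i$ excluded potential $\phi_{i,\mathrm{exc}}$ is harmonic in a neighbourhood of $\partial\Omega_i$ (Remark \ref{rem:Field1}), so on the closed ball $\overline{\Omega_i}$ it equals the harmonic extension of its own interior Dirichlet trace $\gamma^-_i\phi_{i,\mathrm{exc}}$. The plan is therefore to work entirely in terms of the spherical harmonic expansion of $\lambda_i := \gamma^-_i\phi_{i,\mathrm{exc}}$ on $\partial\Omega_i$, compute the gradient of the harmonic extension inside $\Omega_i$ explicitly, take its Dirichlet trace componentwise, and read off the $|||\cdot|||_i$ norm using the definition of the $W^{\ell_{\max}}$-type inner products in \eqref{eq:SH1} and Definition \ref{def:7.1}. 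Since everything is rotationally and translationally natural, I would first reduce to the case $\bold{x}_i = \mathbf{0}$ and $r_i = 1$ by scaling, track how the constant picks up powers of $r_i$, and reinstate them at the end; this is where the $r_i$-dependent constant $\widetilde{C_{r_i}}$ comes from.

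The main computation is the following. Writing $\lambda_i = \sum_{\ell,m}[\lambda_i]^\ell_m \mathcal{Y}^m_\ell$, the harmonic extension into the unit ball is $u(\bold{x}) = \sum_{\ell,m}[\lambda_i]^\ell_m |\bold{x}|^\ell\,\mathcal{Y}^m_\ell(\bold{x}/|\bold{x}|)$, so $\boldsymbol{E}_i = -\nabla u$ is, componentwise, a polynomial whose homogeneous pieces of degree $\ell-1$ are built from the degree-$\ell$ spherical harmonics. Restricting $\partial_\alpha u$ to the unit sphere gives a function whose spherical-harmonic content lives in degrees $\ell-1$ and $\ell+1$, with coefficients bounded (up to absolute constants coming from Clebsch--Gordan / recursion relations for $\partial_\alpha(|\bold{x}|^\ell\mathcal{Y}^m_\ell)$) by $\ell\,|[\lambda_i]^\ell_m|$. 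Plugging this into the $|||\cdot|||_i^2$ norm — which on $\breve H^{1/2}$-type data weights the degree-$\ell$ coefficient by $\ell$, hence by $\ell^3$ after the extra factor $\ell^2$ from differentiation, matching exactly the $H^{3/2}$ weight $\ell^{3}$ in Definition \ref{def:7.1} — yields \eqref{eq:New_Ben_1}. The projector $\mathbb{P}_0^\perp$ on the right simply accounts for the fact that the constant ($\ell=0$) mode of $\lambda_i$ is annihilated by the gradient and so contributes nothing. For \eqref{eq:New_Ben_2} one repeats the argument but first applies $\mathbb{P}_{\ell_{\max},i}$: since differentiation shifts degrees by $\pm1$, the degrees $0,\dots,\ell_{\max}$ of $\partial_\alpha u$ are controlled precisely by the degrees $1,\dots,\ell_{\max}+1$ of $\lambda_i$, which is the content of the operator $\mathbb{P}_{\ell_{\max}+1,i}\mathbb{P}_0^\perp$ on the right-hand side; the norm bookkeeping is otherwise identical.

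The main technical obstacle is making the degree-shift estimate for $\partial_\alpha\big(|\bold{x}|^\ell\mathcal{Y}^m_\ell\big)$ precise with an $\ell$-uniform constant: one needs that the gradient of a solid harmonic of degree $\ell$ decomposes into solid harmonics of degree $\ell-1$ with coefficients bounded by $C\ell\max_m|[\lambda_i]^\ell_m|$ in $\ell^2$, uniformly in $\ell$. This is classical (it follows, e.g., from the standard recurrence relations for spherical harmonics, or from the fact that $\partial_\alpha$ maps the space of solid harmonics of degree $\ell$ into that of degree $\ell-1$ and one can estimate the induced map on the orthonormal bases), but it must be invoked carefully so that the resulting constant depends only on $r_i$ after rescaling and not on $\ell$ or $N$. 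Once this lemma about $\nabla$ on solid harmonics is in hand, both inequalities are a direct matching of weights between the $|||\cdot|||_i$ norm and the $H^{3/2}(\partial\Omega_i)$ norm, and the stability of the $\mathbb{P}$-projectors in these norms finishes the argument.
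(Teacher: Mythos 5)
Your proposal follows essentially the same path as the paper: expand $\gamma_i^-\phi_{i,{\rm exc}}$ in spherical harmonics, represent $\phi_{i,{\rm exc}}$ on $\overline{\Omega_i}$ by solid harmonics, differentiate term by term, and use the fact that $\partial_\alpha$ lowers the degree of a solid harmonic to match weights between $|||\cdot|||_i$ and $H^{\frac{3}{2}}(\partial \Omega_i)$ and to justify the degree bookkeeping behind \eqref{eq:New_Ben_2}. The difference lies in how the quantitative estimate is closed: the step you identify as the main technical obstacle, namely an $\ell$-uniform bound on the coefficients of $\partial_\alpha$ applied to a degree-$\ell$ solid harmonic, is precisely what the paper's proof circumvents. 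Instead of estimating those coefficients, the paper uses Green's identity to rewrite the DtN pairing of the (mean-free part of the) trace of $\partial_\alpha\phi_{i,{\rm exc}}$ as the $H^1(\Omega_i)$ seminorm of $\partial_\alpha\phi_{i,{\rm exc}}$, bounds this by the $H^2(\Omega_i)$ seminorm of $\phi_{i,{\rm exc}}$, and then by $C_{r_i}\,|||\mathbb{P}^{\perp}_{0,i}\gamma_i^-\phi_{i,{\rm exc}}|||^2_{H^{\frac{3}{2}}(\partial\Omega_i)}$, so no Clebsch--Gordan or recursion constants ever appear. Two smaller corrections to your argument: (i) each Cartesian component of the gradient of a degree-$\ell$ solid harmonic is itself a homogeneous harmonic polynomial of degree $\ell-1$, so its spherical trace lives purely in degree $\ell-1$, not in degrees $\ell-1$ and $\ell+1$ as you state; your weaker claim still suffices for the final bound in \eqref{eq:New_Ben_2}, but the sharp statement is what yields the paper's clean identity $\mathbb{P}_{\ell_{\max},i}\gamma_i^-\xi=0$ for the tail consisting of degrees at least $\ell_{\max}+2$. (ii) Your weight-matching ``factor $\ell$ from the norm times $\ell^2$ from differentiation equals the $H^{\frac{3}{2}}$ weight'' applies only to the modes $\ell\geq 1$ of the output; the constant mode of $\gamma_i^-\partial_\alpha\phi_{i,{\rm exc}}$, generated by the degree-$1$ modes of the potential, enters $|||\cdot|||_i$ through the $L^2$ projection term and must be treated separately, which the paper does by isolating it as an explicit constant $C_{\rm const}$ controlled by the degree-$1$ coefficients of $\mathbb{P}^{\perp}_{0,i}\gamma_i^-\phi_{i,{\rm exc}}$. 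With these two repairs your argument is complete and equivalent to the paper's.
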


\begin{remark}
	Consider Lemma \ref{lem:hassan}. We recall from Remark \ref{rem:Field1} that the $i$ excluded electric potential $\phi_{i, {\rm exc}}$ and electric field $\boldsymbol{E}_{i}$ are both smooth on $\overline{\Omega_i}$ for each $i \in \{1, \ldots,N\}$. Consequently, the norms appearing in the estimate are well-defined. Furthermore, although the conclusion of Lemma \ref{lem:hassan} might seem obvious, the key novelty of this result is that the constant $\widetilde{C_{r_i}}$ appearing in the bound depends only on the radius $r_i$ of the sphere $\Omega_i$ and is, in particular, independent of the number $N$ of dielectric spherical particles.
\end{remark}

\begin{proof}[Lemma \ref{lem:hassan}:]
	We will first prove the estimate \ref{eq:New_Ben_1}. Let $\{i \in 1\, \ldots, N\}$ be fixed. As emphasised previously, Remark \ref{rem:Field1} implies that $\phi_{i, {\rm exc}} \in C^{\infty}(\overline{\Omega_i})$. In view of Definition \ref{def:7.2} therefore, there exist coefficients $[\phi_i]_{\ell}^m$, $\ell \in \mathbb{N}_0$, $-\ell \leq m \leq \ell$ satisfying suitable decay properties such that for all $\bold{x} \in \partial \Omega_i$ it holds that
	\begin{align*}
	\phi_{i, {\rm exc}}(\bold{x})= \sum_{{\ell}=0}^{\infty}\sum_{m=-\ell}^{m=\ell} [\phi_i]_{\ell}^m \mathcal{Y}_{\ell}^m \left(\frac{\bold{x}-\bold{x}_i}{\vert \bold{x}-\bold{x}_i \vert}\right).
	\end{align*}
	
	Since the electric potential $\phi_{i, {\rm exc}}$ is harmonic on $\Omega_i$, standard results (see, e.g., \cite[Chapter 2(H)]{Folland}) yield that we have the following representation of $\phi_{i, {\rm exc}}$ in $\overline{\Omega_i}$:
	\begin{align}\label{eq:Hassan1}
	\phi_{i, {\rm exc}}= \sum_{{\ell}=0}^{\infty}\sum_{m=-\ell}^{m=\ell} [\phi_i]_{\ell}^m \frac{\vert\bold{x}-\bold{x}_i\vert^{\ell} }{r_i^{\ell}}\mathcal{Y}_{\ell}^m \left(\frac{\bold{x}-\bold{x}_i}{\vert \bold{x}-\bold{x}_i \vert}\right) \quad \forall \bold{x} \in \overline{\Omega_i}.
	\end{align}
	
	We recall from Definition \ref{def:field2} of $\boldsymbol{E}_i$ that $\gamma_i^- \big(\boldsymbol{E}_i\big)_{\alpha} = \gamma_i^- \partial_{\alpha} \phi_{i,{\rm exc}}$ for each $\alpha=1,2,3$. Since $\phi_{i,{\rm exc}}$ is smooth in a neighbourhood of the sphere $\Omega_i$ (see Remark \ref{rem:Field1}), we can use Equation \eqref{eq:Hassan1} to obtain an expression for the derivative in $\overline{\Omega_i}$ and then simply take the restriction of this derivative on the boundary $\partial \Omega_i$. To this end, we first observe that for all $\bold{x} \in \overline{\Omega_i}$ it holds that
	\begin{align}\label{eq:Hassan3}
	\big(\partial_{\alpha}\phi_{i,{\rm exc}}\big)(\bold{x}) = \sum_{{\ell}=0}^{\infty}\sum_{m=-\ell}^{m=\ell} \frac{1}{r_i^{\ell}}[\phi_i]_{\ell}^m\, \partial_{\alpha}\left(\vert\bold{x}-\bold{x}_i\vert^{\ell} \mathcal{Y}_{\ell}^m \left(\frac{\bold{x}-\bold{x}_i}{\vert \bold{x}-\bold{x}_i \vert}\right)\right).
	\end{align}
	
	Clearly, the next step would be to bound the quantity $||| \gamma_i^-\big(\partial_{\alpha}\phi_{i,{\rm exc}}\big)|||_i$ using the expression given by Equation \eqref{eq:Hassan3}. However, in order to evaluate the $||| \cdot |||_i$ norm, we require the projection of the trace onto the space of constant functions $\mathcal{C}(\partial \Omega_i)$. Therefore, our next step is to simplify the series expansion above.
	
	Observe that the $\ell=0$ term in the series expansion \eqref{eq:Hassan3} is zero. Furthermore, the $\ell=1$ terms are easy to simplify. Indeed, using the definition of the spherical harmonics in cartesian coordinates, we have
	\begin{align*}
	\partial_{\alpha} \vert\bold{x}-\bold{x}_i\vert \mathcal{Y}_{1}^m \left(\frac{\bold{x}-\bold{x}_i}{\vert \bold{x} - \bold{x}_i\vert}\right)=\begin{cases}\sqrt{\frac{3}{4\pi}} \qquad &\text{ if } (m, \alpha) \in \big\{(-1, 2), (0, 3), (1,1)\big\},\\
	0 \qquad &\text{ otherwise}.
	\end{cases}
	\end{align*}
	
	Therefore, with the introduction of an appropriate binary-valued map $k(m, \alpha) \in \{0, 1\}$ we have for all $\bold{x} \in \overline{\Omega_i}$ that
	\begin{align*}
	\big(\partial_{\alpha}\phi_{i,{\rm exc}}\big)(\bold{x}) = \sum_{{\ell}=2}^{\infty}\sum_{m=-\ell}^{m=\ell} \frac{1}{r_i^{\ell}}[\phi_i]_{\ell}^m\, \partial_{\alpha}\left(\vert\bold{x}-\bold{x}_i\vert^{\ell} \mathcal{Y}_{\ell}^m \left(\frac{\bold{x}-\bold{x}_i}{\vert \bold{x}-\bold{x}_i \vert}\right)\right)+\underbrace{\sum_{m=-1}^{m=1}\sqrt{\frac{3}{4\pi r^2_i}}[\phi_i]_{1}^m\, k(m, \alpha)}_{:= C_{\rm const}}.
	\end{align*}
	
	We have thus decomposed $\partial_{\alpha}\phi_{i,{\rm exc}}$ on the ball $\overline{\Omega_i}$ as the sum of two terms, one of which is a constant $C_{\rm const}$. We claim that in fact $\mathbb{P}_{0, i}\left(\gamma^-_i\left(\partial_{\alpha}\phi_{i,{\rm exc}}\right)\right)= C_{\rm const}$. To justify the claim, it suffices to consider the function $\psi^{\alpha}_i:=\partial_{\alpha}\phi_{i,{\rm exc}} -C_{\rm const}$ and show that $\psi^{\alpha}_i \in \breve{H}^{\frac{1}{2}}(\partial \Omega_i)$. We observe that
	\begin{align*}
	\psi^{\alpha}_i(\bold{x})=\sum_{{\ell}=2}^{\infty}\sum_{m=-\ell}^{m=\ell} \frac{1}{r_i^{\ell}}[\phi_i]_{\ell}^m\, \partial_{\alpha}\left(\vert\bold{x}-\bold{x}_i\vert^{\ell} \mathcal{Y}_{\ell}^m \left(\frac{\bold{x}-\bold{x}_i}{\vert \bold{x}-\bold{x}_i \vert}\right)\right)~\forall \bold{x} \in \overline{\Omega_i}.
	\end{align*}
	
	It is well known (see, e.g., \cite[Chapter 5]{axler2013harmonic}, \cite[Chapter 8]{McLean}) that the function $
	\vert\bold{x}-\bold{x}_i\vert^{\ell} \mathcal{Y}_{\ell}^m \left(\frac{\bold{x}-\bold{x}_i}{\vert \bold{x} - \bold{x}_i\vert}\right)$ is a homogenous, harmonic polynomial of degree $\ell$ in the variables $(\bold{x} - \bold{x}_i)_{\alpha}, ~ \alpha =1,2,3$. Consequently, the partial derivative $
	\partial_{\alpha}\left(\vert\bold{x}-\bold{x}_i\vert^{\ell} \mathcal{Y}_{\ell}^m \left(\frac{\bold{x}-\bold{x}_i}{\vert \bold{x} - \bold{x}_i\vert}\right)\right)$ must be a homogenous, harmonic polynomial of degree $\ell-1$ in $(\bold{x} - \bold{x}_i)_{\alpha}, ~ \alpha =1,2,3$. It follows that there exist coefficients $[d]_{\ell}^m, ~\ell \geq 1, ~-\ell \leq m \leq \ell$ such that the function $\psi^{\alpha}_i$ can be written as
	\begin{align*}
	\psi^{\alpha}_i(\bold{x})=\sum_{{\ell}=1}^{\infty}\sum_{m=-\ell}^{m=\ell} \frac{1}{r_i^{\ell}}[d]_{\ell}^m\vert\bold{x}-\bold{x}_i\vert^{\ell} \mathcal{Y}_{\ell}^m \left(\frac{\bold{x}-\bold{x}_i}{\vert \bold{x}-\bold{x}_i \vert}\right) \quad \forall \bold{x} \in \overline{\Omega_i},
	\end{align*}
	and therefore $\gamma_i^- \psi \in \breve{H}^{\frac{1}{2}}(\partial \Omega_i)$ as claimed. Recalling the definition of the local $||| \cdot |||_{i}$ norm, we see
	\begin{align}\nonumber
	||| \gamma_i^-\partial_{\alpha}\phi_{i,{\rm exc}}|||^2_{i} &= \Vert \mathbb{P}_{0, i} \gamma_i^-\partial_{\alpha}\phi_{i,{\rm exc}}\Vert_{L^2(\partial \Omega_i)}^2+ \left\langle \text{DtN}\gamma_i^-\psi^{\alpha}_i, \gamma_i^-\psi^{\alpha}_i\right\rangle_{\partial \Omega_i} \nonumber\\
	&= 4\pi r_i^2 C_{\rm const}^2 + \left\langle \text{DtN}\gamma_i^-\psi^{\alpha}_i, \gamma_i^-\psi^{\alpha}_i\right\rangle_{\partial \Omega_i}. \label{eq:Hassan4}
	\end{align}
	
	Our next task is to obtain a simple bound for the above duality pairing. Recall that by definition $\phi_{i,{\rm exc}}$ is harmonic on $\Omega_i$ and it therefore follows that any partial derivative $\partial_{\alpha} \phi_{i,{\rm exc}}, ~ \alpha=1,2,3$ is also harmonic in $\Omega_i$.  Consequently, Green's identity yields that
	\begin{align*}
	\vert\psi^{\alpha}_i\vert^2_{H^1(\Omega_i)}:=\int_{\Omega_i} \vert \nabla \psi^{\alpha}_i(\bold{x})\vert^2\, d\bold{x}
	=&\left\langle\text{DtN} \gamma_i^- \psi^{\alpha}_i, \gamma_i^- \psi^{\alpha}_i \right\rangle_{\partial \Omega_i},
	\end{align*}
	where $\vert \cdot \vert_{H^1(\Omega_i)}$ denotes the usual $H^1$ semi-norm on $\Omega_i$. Let $\vert \cdot \vert_{H^2(\Omega_i)}$ denote the usual $H^2$ semi-norm on $\Omega_i$. It is then clear that $\vert\partial_{\alpha}\phi_{i,{\rm exc}} \vert^2_{H^1(\Omega_i)} \leq \vert\phi_{i,{\rm exc}} \vert^2_{H^2(\Omega_i)}$. Furthermore, it is straightforward to show that there exists a constant $C_{r_i}$ depending only on the radius $r_i$ such that 
	\begin{align*}
	\left\langle \text{DtN} \gamma_i^- \partial_{\alpha}\phi_{i,{\rm exc}},\gamma_i^- \partial_{\alpha}\phi_{i,{\rm exc}} \right\rangle_{\partial \Omega_i}&= 	\vert\partial_{\alpha}\phi_{i,{\rm exc}} \vert^2_{H^1(\Omega_i)} 
	\leq \vert\phi_{i,{\rm exc}} \vert^2_{H^2(\Omega_i)}\leq C_{r_i} \big|\big|\big|\mathbb{P}^{\perp}_{0, i}\gamma_i^- \phi_{i,{\rm exc}}  \big|\big|\big|^2_{H^{\frac{3}{2}}(\partial \Omega_i)}.
	\end{align*}
	
	Using this bound in Equation \eqref{eq:Hassan4}, we obtain that
	\begin{align*}
	||| \gamma_i^-\partial_{\alpha}\phi_{i,{\rm exc}}|||^2_{i} \leq 4\pi r_i^2 C_{\rm const}^2 + C_{r_i} \big|\big|\big| \mathbb{P}^{\perp}_{0, i}\gamma_i^- \phi_{i,{\rm exc}}  \big|\big|\big|^2_{H^{\frac{3}{2}}(\partial \Omega_i)}.
	\end{align*}
	
	Moreover, since $C_{\rm const}$ depends on the radius $r_i$ and the coefficients $[\phi_i]_{1}^m$, $m \in \{-1, 0, 1\}$, we can deduce the existence of yet another constant $\widetilde{C_{r_i}}>0$ also depending only $r_i$ such that
	\begin{align*}
	||| \gamma_i^-\big(\boldsymbol{E}_{i}\big)_{\alpha}  |||^2_{i} \leq  \widetilde{C_{r_i}} \big|\big|\big| \mathbb{P}^{\perp}_{0, i}\gamma_i^- \phi_{i,{\rm exc}} \big|\big|\big|^2_{H^{\frac{3}{2}}(\partial \Omega_i)}.
	\end{align*}
	
	This completes the proof for Estimate \ref{eq:New_Ben_1}. We now proceed to the proof of Estimate \ref{eq:New_Ben_2}. To this end, let $i \in \{1, \ldots, N\}$ be fixed once again. We first define the function $\xi \in C^{\infty}(\overline{\Omega_i})$ as
	\begin{align*}
	\xi(\bold{x}):=\sum_{\ell=\ell_{\max} +2 }^{\infty}\sum_{m=-\ell}^{m=\ell} \frac{1}{r_i^{\ell}}[\phi_i]_{\ell}^m\,\partial_{\alpha} \left(\vert\bold{x}-\bold{x}_i\vert^{\ell}\mathcal{Y}_{\ell}^m \left(\frac{\bold{x}-\bold{x}_i}{\vert \bold{x}-\bold{x}_i\vert}\right)\right).
	\end{align*}
	
	It follows from Equation \eqref{eq:Hassan3} that for all $\bold{x}\in \overline{\Omega_i}$ we can write
	\begin{align*}
	\big(\partial_{\alpha}\phi_{i,{\rm exc}}\big)(\bold{x}) = \sum_{{\ell}=0}^{\ell_{\max} +1}\sum_{m=-\ell}^{m=\ell} \frac{1}{r_i^{\ell}}[\phi_i]_{\ell}^m\,\partial_{\alpha} \left(\vert\bold{x}-\bold{x}_i\vert^{\ell}\mathcal{Y}_{\ell}^m \left(\frac{\bold{x}-\bold{x}_i}{\vert \bold{x}-\bold{x}_i \vert}\right)\right)+\xi(\bold{x}).
	\end{align*}
	
	We now claim that $\mathbb{P}_{\ell_{\max}, i}\gamma_i^-\xi=0$. Indeed, as argued in the proof for Estimate \eqref{eq:New_Ben_1}, the function $\vert\bold{x}-\bold{x}_i\vert^{\ell} \mathcal{Y}_{\ell}^m \left(\frac{\bold{x}-\bold{x}_i}{\vert \bold{x} - \bold{x}_i\vert}\right)$ is a homogenous, harmonic polynomial of degree~$\ell$ in the variables $(\bold{x} - \bold{x}_i)_{\alpha}, ~ \alpha =1,2,3$. This implies that for any $\ell\geq 1$ the partial derivative $\partial_{\alpha}\left(\vert\bold{x}-\bold{x}_i\vert^{\ell} \mathcal{Y}_{\ell}^m \left(\frac{\bold{x}-\bold{x}_i}{\vert \bold{x} - \bold{x}_i\vert}\right)\right)$ is a homogenous, harmonic polynomial of degree $\ell-1$ in $(\bold{x} - \bold{x}_i)_{\alpha}, ~ \alpha =1,2,3$. Consequently, we obtain the existence of coefficients $[d]_{\ell}^m, ~\ell \geq 1, ~-\ell \leq m \leq \ell$ such that the function $\xi$ can be written as
	\begin{align*}
	\xi(\bold{x})=\sum_{{\ell}=\ell_{\max}+1}^{\infty}\sum_{m=-\ell}^{m=\ell} \frac{1}{r_i^{\ell}}[d]_{\ell}^m\vert\bold{x}-\bold{x}_i\vert^{\ell} \mathcal{Y}_{\ell}^m \left(\frac{\bold{x}-\bold{x}_i}{\vert \bold{x}-\bold{x}_i \vert}\right) \quad \forall \bold{x} \in \overline{\Omega_i},
	\end{align*} 
	and therefore $\mathbb{P}_{\ell_{\max}, i}\gamma_i^- \xi=0$ as claimed.
	
	The remainder of the proof is now essentially identical to the proof for Estimate \ref{eq:New_Ben_1} with some obvious changes. Indeed, repeating the arguments we presented previously, we arrive at the inequality:
	\begin{align*}
	||| \mathbb{P}_{\ell_{\max}, i}\gamma_i^-\big(\boldsymbol{E}_{i}\big)_{\alpha}  |||^2_{i} \leq  \widetilde{C_{r_i}} \big|\big|\big|\mathbb{P}_{\ell_{\max}+1, i}\mathbb{P}^{\perp}_{0, i}\gamma_i^- \phi_{i,{\rm exc}}  \big|\big|\big|^2_{H^{\frac{3}{2}}(\partial \Omega_i)},
	\end{align*}
	which completes the proof for Estimate \ref{eq:New_Ben_2}.
\end{proof}

Lemma \ref{lem:hassan} is the only tool required to prove Theorem \ref{lem:hassan4} on the approximability of the electrostatic forces. For the sake of brevity however-- and owing to the fact that Theorem \ref{thm:Force} is not the main focus of our analysis, we do not provide the proof. The interested reader can find a detailed proof in \cite[Section 5.1.3, Theorem 5.18]{Hassan_Dis}. In order to prove Theorem \ref{thm:Force}, we require an additional result, which follows as a straightforward corollary of Lemma \ref{lem:hassan} in the special case when the charge distribution $\sigma $ satisfies $\sigma \in H^{s}(\partial \Omega)$ for some $s \geq \frac{1}{2}$.

\begin{corollary}[Estimates in terms of Induced Surface Charges]\label{lem:hassan3}~
	\noindent Let $s \geq \frac{1}{2}$, let $\sigma \in H^{s}(\partial \Omega)$ be a given charge distribution and for each $i \in \{1, \ldots, N\}$ let $\boldsymbol{E}_{i} \in L_{\rm loc}^2(\mathbb{R}^3; \mathbb{R}^3)$ denote the $i$ excluded electric field generated by $\sigma$ as defined through Definition \ref{def:field2}. Then there exists a constant ${C_{\rm field}}> 0$ that depends only on the radii $\{r_j\}_{i=1}^N$ of the open balls $\{\Omega_j\}_{i=1}^N$ such that for each $\alpha =1,2,3$ the following hold:
	\begin{align}\label{eq:Estimate1}
	\sum_{i=1}^N \big|\big|\big|\gamma^-_i \big(\boldsymbol{E}_{i}\big)_{\alpha}\big|\big|\big|_i^2 &\leq C_{\rm field} \left( \big|\big|\big|\mathbb{P}_0^{\perp}\mathcal{V}\sigma\big|\big|\big|^2_{H^{\frac{3}{2}}(\partial \Omega)}+ \big|\big|\big|\mathbb{Q}_0^{\perp}\sigma \big|\big|\big|^2\right),\\ 
	\sum_{i=1}^N \big|\big|\big|\mathbb{P}_{\ell_{\max}, i}\gamma^-_i \big(\boldsymbol{E}_{i}\big)_{\alpha} \big|\big|\big|_i^2 &\leq C_{\rm field}\left(\frac{\ell_{\max}+1}{\min r_i}\right)^2\left( \big|\big|\big|\mathbb{P}_{\ell_{\max}+1}\mathbb{P}_0^{\perp}\mathcal{V}\sigma\big|\big|\big|^2+ \big(\big|\big|\big|\mathbb{Q}_{\ell_{\max}+1}\mathbb{Q}_0^{\perp}\sigma \big|\big|\big|^*\big)^2\right).		\label{eq:Estimate2}
	\end{align}
\end{corollary}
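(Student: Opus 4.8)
The plan is to deduce Corollary \ref{lem:hassan3} from Lemma \ref{lem:hassan} in three steps: (i) rewrite the trace $\gamma_i^-\phi_{i,{\rm exc}}$ appearing on the right-hand sides of \eqref{eq:New_Ben_1}--\eqref{eq:New_Ben_2} in terms of the \emph{global} potential $\mathcal{V}\sigma$ together with a purely \emph{local} self-interaction term; (ii) estimate that local term using the fact that the single-layer and Dirichlet-to-Neumann operators on an individual sphere are diagonal in the spherical harmonic basis; and (iii) sum over $i$, invoking Assumption \textbf{A1)} to turn every radius-dependent constant into an $N$-independent one. For step (i), fix $i$ and split $\sigma=\sigma_{i,{\rm exc}}+\widetilde{\sigma}_i$, where $\widetilde{\sigma}_i\in H^{-\frac{1}{2}}(\partial\Omega)$ equals $\sigma$ on $\partial\Omega_i$ and vanishes on $\partial\omega_i$. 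By linearity of the single layer potential, $\phi_{i,{\rm exc}}=\mathcal{S}\sigma-\mathcal{S}\widetilde{\sigma}_i$. Now $\gamma_i^-\mathcal{S}\sigma=(\mathcal{V}\sigma)\vert_{\partial\Omega_i}$, while $\mathcal{S}\widetilde{\sigma}_i$ is generated by a density supported on the single sphere $\partial\Omega_i$ and therefore has continuous trace $\mathcal{V}_i\big(\sigma\vert_{\partial\Omega_i}\big)$ on $\partial\Omega_i$, where $\mathcal{V}_i$ denotes the single-layer boundary operator on $\partial\Omega_i$. Hence we obtain the key identity
\[
\gamma_i^-\phi_{i,{\rm exc}}=(\mathcal{V}\sigma)\vert_{\partial\Omega_i}-\mathcal{V}_i\big(\sigma\vert_{\partial\Omega_i}\big),
\]
which, by Remark \ref{rem:Field1}, is moreover smooth on $\partial\Omega_i$. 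Inserting this into \eqref{eq:New_Ben_1} (resp.\ \eqref{eq:New_Ben_2}), using $|||a-b|||^2\le 2|||a|||^2+2|||b|||^2$, and summing over $i$, the proof reduces to bounding, sphere by sphere, the two contributions coming from $(\mathcal{V}\sigma)\vert_{\partial\Omega_i}$ and from $\mathcal{V}_i\big(\sigma\vert_{\partial\Omega_i}\big)$.

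For step (ii), recall that $\mathcal{V}_i$ and $\text{DtN}_i$ act on each recentred spherical harmonic $\mathcal{Y}_\ell^m$ by multiplication by $\tfrac{r_i}{2\ell+1}$ and $\tfrac{\ell}{r_i}$, respectively; consequently $\mathcal{V}_i$ commutes with the constant-removal projector $\mathbb{P}^\perp_{0,i}$ and with the degree-truncation projectors $\mathbb{P}_{\ell_{\max}+1,i}$, $\mathbb{Q}_{\ell_{\max}+1,i}$, and comparing the weighted sums that define the $H^{\frac{1}{2}}$-, $H^{\frac{3}{2}}$- and dual norms on $\partial\Omega_i$ (Definition \ref{def:7.1}) shows that the one-order smoothing of $\mathcal{V}_i$ exactly absorbs the gap between $H^{\frac{3}{2}}(\partial\Omega_i)$ and $H^{\frac{1}{2}}(\partial\Omega_i)$. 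Concretely, for any $\tau$ on $\partial\Omega_i$ with vanishing constant part one has $|||\mathcal{V}_i\tau|||^2_{H^{\frac{3}{2}}(\partial\Omega_i)}\le C_{r_i}\,|||\tau|||_i^2$, and if in addition $\tau$ has degree at most $\ell_{\max}+1$ then $|||\mathcal{V}_i\tau|||^2_{H^{\frac{3}{2}}(\partial\Omega_i)}\le C_{r_i}\big(\tfrac{\ell_{\max}+1}{\min r_i}\big)^2\big(|||\tau|||_i^*\big)^2$, both with $C_{r_i}$ depending only on $r_i$; applying these to $\tau=\mathbb{Q}^\perp_{0,i}\big(\sigma\vert_{\partial\Omega_i}\big)$ (for \eqref{eq:Estimate1}) and to $\tau=\mathbb{Q}_{\ell_{\max}+1,i}\mathbb{Q}^\perp_{0,i}\big(\sigma\vert_{\partial\Omega_i}\big)$ (for \eqref{eq:Estimate2}) handles the self-interaction term. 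The $(\mathcal{V}\sigma)$-term is even simpler: for \eqref{eq:Estimate1} it contributes $|||\mathbb{P}^\perp_{0,i}(\mathcal{V}\sigma)\vert_{\partial\Omega_i}|||^2_{H^{\frac{3}{2}}(\partial\Omega_i)}$, while for \eqref{eq:Estimate2} its truncation $\mathbb{P}_{\ell_{\max}+1,i}\mathbb{P}^\perp_{0,i}(\mathcal{V}\sigma)\vert_{\partial\Omega_i}$ lies in $W^{\ell_{\max}+1}(\partial\Omega_i)$ with vanishing constant part, where the inverse inequality $|||w|||^2_{H^{\frac{3}{2}}(\partial\Omega_i)}\le\big(\tfrac{\ell_{\max}+1}{r_i}\big)^2|||w|||^2_{H^{\frac{1}{2}}(\partial\Omega_i)}$ applies and produces the stated prefactor.

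Summing these sphere-wise bounds over $i$ gives \eqref{eq:Estimate1} and \eqref{eq:Estimate2}: the norms $|||\cdot|||$, $|||\cdot|||_{H^{\frac{3}{2}}(\partial\Omega)}$ and $|||\cdot|||^*$ all decompose as orthogonal sums of the corresponding norms on the individual spheres (Definition \ref{def:7.2}), and the global projectors $\mathbb{P}_0^\perp$, $\mathbb{Q}_0^\perp$, $\mathbb{P}_{\ell_{\max}+1}$, $\mathbb{Q}_{\ell_{\max}+1}$ restrict block-diagonally to their sphere-wise counterparts, so the sums on both sides reassemble into global quantities; every constant that appears is a bounded function of $r_i$, and Assumption \textbf{A1)} collapses these into a single $N$-independent $C_{\rm field}$. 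I expect the main points requiring care to be the identity in step (i)---in particular identifying $\gamma_i^-\mathcal{S}\widetilde{\sigma}_i$ with the single-sphere operator $\mathcal{V}_i$ and using the smoothness of $\phi_{i,{\rm exc}}$ near $\partial\Omega_i$ (Remark \ref{rem:Field1}) to legitimise the trace manipulations---and the bookkeeping of the exact powers of $r_i$ and $\ell_{\max}+1$; the per-sphere estimates themselves are routine once $\mathcal{V}_i$ and $\text{DtN}_i$ are diagonalised.
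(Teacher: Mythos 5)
Your proposal is correct and follows essentially the same route as the paper's (sketched) proof: the decomposition $\gamma_i^-\phi_{i,{\rm exc}}=\mathcal{V}\sigma-\mathcal{V}\widetilde{\sigma_i}$ with $\widetilde{\sigma_i}$ the local charge on $\partial\Omega_i$, the eigenfunction property of the single-layer operator on a sphere to absorb the $H^{\frac{3}{2}}$/$H^{\frac{1}{2}}$ gap, and an inverse estimate on the truncated space to produce the factor $\big(\tfrac{\ell_{\max}+1}{\min r_i}\big)^2$, followed by sphere-wise summation under \textbf{A1)}. Your per-sphere bookkeeping of the eigenvalues of $\mathcal{V}_i$ and ${\rm DtN}_i$ is accurate, so the argument fills in the details the paper leaves as a sketch.
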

\begin{proof}
	The proof for the estimate \eqref{eq:Estimate1} is straightforward and relies primarily on the fact that for all $i \in \{1, \ldots, N\}$ the $i$ excluded electric potential satisfies
	\begin{align*}
	\gamma_i^-\phi_{i, {\rm exc}} = \mathcal{V}\big(\sigma- \widetilde{\sigma_i}\big)=\mathcal{V}\sigma-\mathcal{V}\widetilde{\sigma_i},
	\end{align*}
	where $\widetilde{\sigma_i}:=\sigma = \sigma_{i, \rm exc}$ is the `local' charge distribution supported only on the sphere $\partial \Omega_i$. Using the fact that the spherical harmonics are eigenfunctions of the single layer boundary operator on the sphere (see, e.g., \cite{vico2014boundary}) then allows us to obtain appropriate bounds for the second term in the above decomposition. The estimate \eqref{eq:Estimate2} can be deduced by repeating the above arguments and using the equivalence of norms on the space $W^{\ell_{\max}}$. 

\end{proof}

We are now ready to state the proof of Theorem \ref{thm:Force}.

\begin{proof}[{{Proof of Theorem}} \ref{thm:Force}:]
	
	Let $i \in \{1, \ldots, N\}$ be fixed. Let $\boldsymbol{E}_{i}, \boldsymbol{E}^{\ell_{\max}}_{i} \in L_{\rm loc}^2(\mathbb{R}^3; \mathbb{R}^3)$ denote the $i$ excluded electric fields generated by the charge distributions $\nu$ and $\nu_{\ell_{\max}}$ respectively as defined through Definition \ref{def:field2}, and let $\nu_i:= \nu\vert_{\partial \Omega_i}$ and $\nu_i^{\ell_{\max}}:=\nu_{\ell_{\max}} \vert_{\partial \Omega_i}$. Arguing exactly as in the proof of Theorem \ref{lem:hassan4}, we see that for each $\alpha =1, 2, 3$ it holds that 
	\begin{align*}
	\Big\vert (\boldsymbol{F}_i)_{\alpha} - \big(\boldsymbol{F}_i^{\ell_{\max}}\big)_{\alpha}\Big \vert &\leq 
	|||\nu_i- \nu_i^{\ell_{\max}} |||_i^* ||| \gamma_i^-\big(\boldsymbol{E}_i\big)_{\alpha}|||_i+|||\nu_i^{\ell_{\max}} |||_i^* ||| \mathbb{P}_{\ell_{\max}, i}\gamma_i^-\big(\boldsymbol{E}_i-\boldsymbol{E}_i^{\ell_{\max}}\big)_{\alpha}|||_i.
	\end{align*}
	
	Consequently, using the Cauchy-Schwarz inequality we have
	\begin{align*}
	\sum_{i=1}^N\Big\vert (\boldsymbol{F}_i)_{\alpha} - \big(\boldsymbol{F}_i^{\ell_{\max}}\big)_{\alpha}\Big \vert &\leq {|||\nu- \nu_{\ell_{\max}} |||^*} \left(\sum_{i=1}^N||| \gamma_i^-\big(\boldsymbol{E}_i\big)_{\alpha}|||^2_i\right)^{\frac{1}{2}}\\
	&+{|||\nu_{\ell_{\max}} |||^*} \left(\sum_{i=1}^N||| \mathbb{P}_{\ell_{\max}, i}\gamma_i^-\big(\boldsymbol{E}_i-\boldsymbol{E}_i^{\ell_{\max}}\big)_{\alpha}|||_i^2 \right)^{\frac{1}{2}}.
	\end{align*}
	
	Next, using the linearity of the underlying operators and applying Corollary \ref{lem:hassan3}, which is now applicable since $s \geq \frac{1}{2}$ we obtain that 
	\begin{multline}
	\sum_{i=1}^N\Big\vert (\boldsymbol{F}_i)_{\alpha} - \big(\boldsymbol{F}_i^{\ell_{\max}}\big)_{\alpha}\Big \vert \leq C^{\frac{1}{2}}_{\rm field}  {|||\nu- \nu_{\ell_{\max}} |||^*} \left(|||\mathbb{P}_0^{\perp}\mathcal{V}\nu|||^2_{H^{\frac{3}{2}}(\partial \Omega)}+ ||| \mathbb{Q}_0^{\perp}\nu |||^2\right)^{\frac{1}{2}}\\ 
	+C^{\frac{1}{2}}_{\rm field} {|||\nu_{\ell_{\max}} |||^*} \left(\frac{\ell_{\max}+1}{\min r_i}\right)\bigg(||| \mathbb{P}_{\ell_{\max}+1}\mathbb{P}^{\perp}_{0}\mathcal{V}(\nu-\nu_{\ell_{\max}})|||^2+{|||\mathbb{Q}_{\ell_{\max}+1}\mathbb{Q}_0^{\perp}\big(\nu-\nu_{\ell_{\max}}\big) |||^*}^2\bigg)^{\frac{1}{2}}. \label{eq:simple0}
	\end{multline}
	
	In order to simplify Inequality \eqref{eq:simple0}, we first use the triangle inequality to obtain 
	\begin{align}\label{eq:simple1}
	|||\nu_{\ell_{\max}} |||^* \leq ||| \nu|||^* + |||\nu-\nu_{\ell_{\max}} |||^*.
	\end{align}
	
	Next, for ease of exposition, let us define the terms
	\begin{align*}
	{\rm (III)}:=&  ~|||\mathbb{P}_0^{\perp}\mathcal{V}\nu|||^2_{H^{\frac{3}{2}}(\partial \Omega)}+ ||| \mathbb{Q}_0^{\perp}\nu |||^2,\\
	{\rm (IV)}:=&~||| \mathbb{P}_{\ell_{\max}+1}\mathbb{P}^{\perp}_{0}\mathcal{V}(\nu-\nu_{\ell_{\max}})|||^2+{|||\mathbb{Q}_{\ell_{\max}+1}\mathbb{Q}_0^{\perp}\big(\nu-\nu_{\ell_{\max}}\big) |||^*}^2.
	\end{align*}
	
	The term (III) can be simplified by observing that the BIE \eqref{eq:3.3a} implies that
	\begin{align*}
	\mathbb{P}_0^{\perp}\mathcal{V}\nu= \frac{\kappa_0}{\kappa_0-\kappa}\text{DtN}^{-1}\left(\mathbb{Q}_0^{\perp}\nu-\frac{4\pi}{\kappa_0}\mathbb{Q}_0^{\perp}\sigma_f\right).
	\end{align*}
	Thus, using the fact that $||| \text{DtN}^{-1}\mathbb{Q}_0^{\perp}\sigma|||^2_{H^{\frac{3}{2}}(\partial \Omega)}= |||\mathbb{Q}_0^{\perp}\sigma|||^2$ for any $\sigma \in H^{-\frac{1}{2}}(\partial \Omega)$ (c.f., Definition~\ref{def:7.2} of the higher order norms), we can conclude that there exists a constant $C_{\rm diel}$ depending only on the dielectric constants such that 
	\begin{align}\label{eq:simple2}
	{\rm (III)} \leq C^2_{\rm diel}\left({|||\mathbb{Q}_0^{\perp} \nu|||}^2+ {||| \mathbb{Q}_0^{\perp}\sigma_f|||}^2\right). 
	\end{align}
	
	In order to simplify (IV), we observe that the BIE \eqref{eq:3.3a} and the Galerkin discretisation \eqref{eq:Galerkina} together imply that $\mathbb{Q}_0 (\nu-\nu_{\ell_{\max}})=0$. Consequently, there exists a function $\zeta \in \breve{H}^{\frac{1}{2}}(\partial \Omega)$ such that $\nu-\nu_{\ell_{\max}}:= \text{DtN}\zeta$. We therefore have
	\begin{align*}
	||| \mathbb{P}_{\ell_{\max}+1}\mathbb{P}^{\perp}_{0}\mathcal{V}(\nu-\nu_{\ell_{\max}})|||^2 \leq ||| \mathbb{P}^{\perp}_{0}\mathcal{V}\text{DtN}\zeta|||^2\leq \frac{c_{\rm equiv}^2}{c_{\mathcal{V}}} ||| \zeta|||^2=\frac{c_{\rm equiv}^2}{c_{\mathcal{V}}} {|||\text{DtN}\zeta|||^*}^2,
	\end{align*}
	where the second inequality can be deduced from the bound \eqref{eq:contraction} in Lemma \ref{lem:single}. Thus,
	\begin{align}\label{eq:simple3}
	{\rm (IV)}\leq \left(1 + \frac{c_{\rm equiv}^2}{c_{\mathcal{V}}}\right){|||\nu-\nu_{\ell_{\max}} |||^*}^2
	\end{align}
	
	Using the bounds \eqref{eq:simple1}-\eqref{eq:simple3}, we can simplify the estimate \eqref{eq:simple0} to obtain 
	\begin{equation}\label{eq:simple0_next}
	\begin{split}
	\sum_{i=1}^N\Big\vert (\boldsymbol{F}_i)_{\alpha} - \big(\boldsymbol{F}_i^{\ell_{\max}}\big)_{\alpha}\Big \vert &\leq C^{\frac{1}{2}}_{\rm field}C_{\rm diel} ||| \nu - \nu_{\ell_{\max}}|||^* \Big(|||\mathbb{Q}_0^{\perp}\nu|||+||| \mathbb{Q}_0^{\perp}\sigma_f|||\Big)\\
	&+C^{\frac{1}{2}}_{\rm field} ||| \nu |||^* \left(\frac{\ell_{\max}+1}{\min r_i}\right) \left(1+ \frac{c_{\rm equiv}}{\sqrt{c_{\mathcal{V}}}}\right)||| \nu - \nu_{\ell_{\max}}|||^*\\
	&+C^{\frac{1}{2}}_{\rm field} \left(\frac{\ell_{\max}+1}{\min r_i}\right) \left(1+ \frac{c_{\rm equiv}}{\sqrt{c_{\mathcal{V}}}}\right){||| \nu - \nu_{\ell_{\max}}|||^*}^2.
	\end{split}
	\end{equation}
	
	In principle, the next step is to simplify further the bound \eqref{eq:simple0_next} using the convergence rates from Theorem \ref{thm:well-posed}. However, in order to obtain a succinct final result, we first rewrite some terms in the estimate \eqref{eq:simple0_next}. Using basic calculus and Definition \ref{def:7.2} of the higher order norms we write for any $s \geq \frac{1}{2}$:
	\begin{equation} \label{eq:simpl0_next1}
	\begin{split}
	|||\mathbb{Q}_0^{\perp}\nu|||+||| \mathbb{Q}_0^{\perp}\sigma_f||| &\leq (\max r_i)^{s-\frac{1}{2}} \left( ||| \nu|||_{H^s(\partial \Omega)}+||| \sigma_f|||_{H^s(\partial \Omega)}\right), \quad \text{and}\\
	||| \nu |||^* \leq ||| \nu |||^* + ||| \sigma_f|||^* &\leq \big(1+(\max r_i)^{s+\frac{1}{2}}\big)\left(|||\nu |||_{{H}^{s}(\partial \Omega)} + |||\sigma_f|||_{{H}^{s}(\partial \Omega)}\right).
	\end{split}
	\end{equation}
	
	Using the bounds \eqref{eq:simpl0_next1} together with the convergence rates from Theorem~\ref{thm:well-posed}, we can simplify the estimate \eqref{eq:simple0_next} to obtain that
	\begin{multline*}
	\hspace{-4mm}\sum_{i=1}^N\Big\vert (\boldsymbol{F}_i)_{\alpha} - \big(\boldsymbol{F}_i^{\ell_{\max}}\big)_{\alpha}\Big \vert \leq C^{\frac{1}{2}}_{\rm field}\bigg(|||\nu|||_{H^s(\partial \Omega)}+|||\sigma_f|||_{H^s(\partial \Omega)}\bigg)^2
	\\\Bigg( {C_{\rm charges}C_{\rm diel}}(\max r_i)^{s-\frac{1}{2}}\left(\frac{\max r_i}{\ell_{\max}+1}\right)^{s+\frac{1}{2}}\\
	+C_{\rm charges}\frac{\max r_i +(\max r_i)^{s+\frac{3}{2}}}{\min r_i}\left(1+ \frac{c_{\rm equiv}}{\sqrt{c_{\mathcal{V}}}}\right) \left(\frac{\max r_i}{\ell_{\max}+1}\right)^{s-\frac{1}{2}}\\
	+C^2_{\rm charges} \frac{\max r_i}{\min r_i}\left(1+ \frac{c_{\rm equiv}}{\sqrt{c_{\mathcal{V}}}}\right) \left(\frac{\max r_i}{\ell_{\max}+1}\right)^{2s}\Bigg).
	\end{multline*}
	
	The proof now follows by defining the constant $C_{\rm force}>0$ as
	\begin{align*}
	C_{\rm force}:= C^{\frac{1}{2}}_{\rm field} \max\Bigg \{{C_{\rm charges}C_{\rm diel}}(\max r_i)^{s-\frac{1}{2}}, ~ C_{\rm charges}^2\frac{\max r_i}{\min r_i}\left(1+ \frac{c_{\rm equiv}}{\sqrt{c_{\mathcal{V}}}}\right),\\  ~ C_{\rm charges}\frac{\max r_i +(\max r_i)^{s+\frac{3}{2}}}{\min r_i}\left(1+ \frac{c_{\rm equiv}}{\sqrt{c_{\mathcal{V}}}}\right) \Bigg\},
	\end{align*}
	and using the fact that for $s\geq \frac{1}{2}$ and $\ell_{\max}+1 > \max_{j=1, \ldots, N} r_j$ it holds that
	\begin{align*}
	\left(\frac{\max r_i}{\ell_{\max}+1}\right)^{2s}\leq\left(\frac{\max r_i}{\ell_{\max}+1}\right)^{\frac{1}{2}+s}\leq	\left(\frac{\max r_i}{\ell_{\max}+1}\right)^{-\frac{1}{2}+s}. 
	\end{align*}
\end{proof}

\vspace{-5mm}
\section{Solution Strategy and Numerical Results}\label{sec:5}
The goal of this section is two-fold. First, we present a linear scaling in complexity solution strategy for computing the approximate electrostatic forces $\{\boldsymbol{F}_i^{\ell_{\max}}\}_{i=1}^N$ defined through Definition \ref{def:Force2_approx}. Second, we provide numerical evidence that supports our theoretical results in Section \ref{sec:4a} as well as our claim that the electrostatic forces can be computed with linear scaling (in $N$) computational cost. In the sequel, we assume the setting of Sections \ref{sec:2}, \ref{sec:3} and \ref{sec:4}.

\subsection{Computing the Electrostatic Forces}\label{sec:5a}
{In view of the results and discussion presented in Sections \ref{sec:3} and \ref{sec:4}, the first step in the computation of the approximate electrostatic forces is obtaining the solution $\nu_{\ell_{\max}} \in W^{\ell_{\max}}$ to the Galerkin discretisation \eqref{eq:Galerkina}. Consequently, if we wish to obtain a linear scaling in complexity solution strategy for the computation of the approximate electrostatic forces, we \emph{must} possess a linear scaling in complexity strategy for calculating the approximate induced surface charge $\nu_{\ell_{\max}}$. Such a strategy was discussed in detail in the contribution \cite{Hassan2}. Theorem \ref{thm:iterations} in Section \ref{sec:2} of the current article summarises an important result from \cite{Hassan2} and states that one can use a GMRES-based solution strategy to obtain an approximation $\nu_{\ell_{\max}}^{\rm approx} \in W^{\ell_{\max}}$-- up to a given tolerance-- of $\nu_{\ell_{\max}} $ using only $\mathcal{O}(N)$ operations.
	
	Consequently, in practice we typically compute-- up to a required tolerance $\epsilon$-- an approximation $\nu_{\ell_{\max}}^{\rm approx} \in W^{\ell_{\max}}$ of the solution $\nu_{\ell_{\max}} \in W^{\ell_{\max}}$ to the Galerkin discretisation \eqref{eq:Galerkina}, and we use $\nu_{\ell_{\max}}^{\rm approx}$ rather than $\nu_{\ell_{\max}}$ to calculate the approximate electrostatic forces as defined through Definition \ref{def:Force2_approx}. It is therefore important to obtain stability estimates for the approximate forces derived from the approximation $\nu_{\ell_{\max}}^{\rm approx}$. To this end, we have the following result.
	
	{\begin{lemma}[Stability of Forces with Respect to Linear Solver Tolerance]~\label{lem:stable}
			\noindent Let $\epsilon > 0$ and $\ell_{\max} \in \mathbb{N}$, let $\sigma_f \in H^{-\frac{1}{2}}(\partial \Omega)$ be a given free charge, let $\nu_{\ell_{\max}} \in W^{\ell_{\max}}$ be the unique solution to the Galerkin discretisation \eqref{eq:Galerkina}, let $\nu_{\ell_{\max}}^{\rm approx} \in W^{\ell_{\max}}$ be an approximation to $\nu_{\ell_{\max}}$ with relative tolerance $\epsilon \ll 1$ as described in Theorem \ref{thm:iterations}, and for each $i=1, \ldots, N$, let $\boldsymbol{F}_i^{\ell_{\max}}$ and $\widehat{\boldsymbol{F}}_i^{\ell_{\max}}$ denote the approximate net force acting on the dielectric particle $\Omega_i$, generated by the charge distributions $\nu_{\ell_{\max}}$ and $\nu_{\ell_{\max}}^{\rm approx}$ respectively as defined through Definition \ref{def:Force2_approx}. Then there exists a constant $C_{\rm stability}>0$ that depends on $\ell_{\max}$, the dielectric constants, the radii of the open balls and the minimum inter-sphere separation distance but is independent of the number $N$ of dielectric particles such that
			\begin{align}\label{eq:stable}
			\frac{\sum_{i=1}^N \sum_{\alpha=1}^3\Big\vert (\boldsymbol{F}^{\ell_{\max}}_i)_{\alpha} - \big(\widehat{\boldsymbol{F}}_i^{\ell_{\max}}\big)_{\alpha}\Big \vert}{||| \mathbb{Q}_0^{\perp}\nu_{\ell_{\max}}|||^*  + ||| \mathbb{Q}_0^{\perp}\nu|||^* + ||| \mathbb{Q}_0^{\perp}\sigma_f|||^*} \leq \epsilon\, C_{\rm stability} \left(|||\mathbb{Q}_0^{\perp} \nu_{\ell_{\max}}|||+ ||| \mathbb{Q}_0^{\perp}\sigma_f|||\right).
			\end{align}
		\end{lemma}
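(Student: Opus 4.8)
The plan is to mirror the proof of Theorem~\ref{thm:Force} almost verbatim, with the continuous solution $\nu$ now played by the discrete Galerkin solution $\nu_{\ell_{\max}}$ and with $\nu_{\ell_{\max}}$ now played by the iterate $\nu_{\ell_{\max}}^{\rm approx}$. The one structural simplification is that \emph{both} charge distributions now lie in $W^{\ell_{\max}}$, so on each sphere only the $\mathbb{P}_{\ell_{\max},i}$-projections of the excluded field traces survive the duality pairings; this is why only the discrete error $|||\nu_{\ell_{\max}}-\nu_{\ell_{\max}}^{\rm approx}|||^{*}$ (bounded by Theorem~\ref{thm:iterations}) enters \eqref{eq:stable} and why $C_{\rm stability}$ is permitted to depend on $\ell_{\max}$.

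First I would fix $\alpha\in\{1,2,3\}$, set $\delta:=\nu_{\ell_{\max}}-\nu_{\ell_{\max}}^{\rm approx}\in W^{\ell_{\max}}$, observe by linearity of the single layer potential that $\boldsymbol{E}_i^{\ell_{\max}}-\widehat{\boldsymbol{E}}_i^{\ell_{\max}}$ (the $i$ excluded field of $\nu_{\ell_{\max}}^{\rm approx}$) is the $i$ excluded field generated by $\delta$, and use the elementary splitting
\begin{align*}
(\boldsymbol{F}_i^{\ell_{\max}})_{\alpha}-(\widehat{\boldsymbol{F}}_i^{\ell_{\max}})_{\alpha}
= \kappa_0\big\langle \delta\vert_{\partial\Omega_i},\gamma_i^-(\boldsymbol{E}_i^{\ell_{\max}})_{\alpha}\big\rangle_{\partial\Omega_i}
+ \kappa_0\big\langle \nu_{\ell_{\max}}^{\rm approx}\vert_{\partial\Omega_i},\gamma_i^-\big(\boldsymbol{E}_i^{\ell_{\max}}-\widehat{\boldsymbol{E}}_i^{\ell_{\max}}\big)_{\alpha}\big\rangle_{\partial\Omega_i}.
\end{align*}
Following the derivation of \eqref{eq:simple0}, I would bound each pairing by appropriate local trace norms, sum over $i$, apply the Cauchy--Schwarz inequality, and use that the global norms $|||\cdot|||^{*}$, $|||\cdot|||$ are the $\ell^2$-sums of their per-sphere analogues, obtaining a bound of the form $\kappa_0|||\delta|||^{*}A_\alpha+\kappa_0|||\nu_{\ell_{\max}}^{\rm approx}|||^{*}B_\alpha$, where $A_\alpha$, $B_\alpha$ are the square-roots of $\sum_i|||\mathbb{P}_{\ell_{\max},i}\gamma_i^-(\boldsymbol{E}_i^{\ell_{\max}})_\alpha|||_i^2$ and $\sum_i|||\mathbb{P}_{\ell_{\max},i}\gamma_i^-(\widetilde{\boldsymbol{E}}_i)_\alpha|||_i^2$, with $\widetilde{\boldsymbol{E}}_i$ the $i$ excluded field of $\delta$. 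Applying Corollary~\ref{lem:hassan3}, estimate \eqref{eq:Estimate2}---legitimate since every element of $W^{\ell_{\max}}$ is smooth and hence lies in $H^{s}(\partial\Omega)$ for all $s$---once with $\sigma=\nu_{\ell_{\max}}$ and once with $\sigma=\delta$ then produces the factor $\big(\frac{\ell_{\max}+1}{\min r_i}\big)$ and reduces the task to bounding $|||\mathbb{P}_{\ell_{\max}+1}\mathbb{P}_0^{\perp}\mathcal{V}\nu_{\ell_{\max}}|||$, $|||\mathbb{Q}_0^{\perp}\nu_{\ell_{\max}}|||^{*}$, $|||\mathbb{P}_{\ell_{\max}+1}\mathbb{P}_0^{\perp}\mathcal{V}\delta|||$, $|||\delta|||^{*}$ and $|||\nu_{\ell_{\max}}^{\rm approx}|||^{*}$.

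The $\delta$-terms are treated exactly as term (IV) in the proof of Theorem~\ref{thm:Force}: recalling that $\mathbb{Q}_0(\nu-\nu_{\ell_{\max}})=0$ and that the iterative construction in Theorem~\ref{thm:iterations} modifies only the $\mathbb{Q}_0^{\perp}$-component of the iterate, one has $\mathbb{Q}_0\delta=0$, hence $\mathbb{Q}_{\ell_{\max}+1}\mathbb{Q}_0^{\perp}\delta=\delta$, and writing $\delta=\text{DtN}\zeta$ with $\zeta\in\breve{H}^{\frac12}(\partial\Omega)$ the contraction bound \eqref{eq:contraction} of Lemma~\ref{lem:single} yields $|||\mathbb{P}_0^{\perp}\mathcal{V}\delta|||\leq\frac{c_{\rm equiv}}{\sqrt{c_{\mathcal{V}}}}|||\delta|||^{*}$, as in \eqref{eq:simple3}. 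For $\mathbb{P}_0^{\perp}\mathcal{V}\nu_{\ell_{\max}}$ one splits $\mathcal{V}\nu_{\ell_{\max}}=\mathcal{V}\nu-\mathcal{V}(\nu-\nu_{\ell_{\max}})$, uses the BIE \eqref{eq:3.3a} to express $\mathbb{P}_0^{\perp}\mathcal{V}\nu$ as in \eqref{eq:simple2} together with the identity $|||\text{DtN}^{-1}\mathbb{Q}_0^{\perp}\tau|||=|||\mathbb{Q}_0^{\perp}\tau|||^{*}$, and bounds $|||\nu-\nu_{\ell_{\max}}|||^{*}\leq|||\mathbb{Q}_0^{\perp}\nu|||^{*}+|||\mathbb{Q}_0^{\perp}\nu_{\ell_{\max}}|||^{*}$ (again using $\mathbb{Q}_0(\nu-\nu_{\ell_{\max}})=0$) together with \eqref{eq:contraction}; this is the precise mechanism by which the continuous quantity $|||\mathbb{Q}_0^{\perp}\nu|||^{*}$ enters the denominator of \eqref{eq:stable}. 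Collecting everything, using $|||\nu_{\ell_{\max}}^{\rm approx}|||^{*}\leq|||\nu_{\ell_{\max}}|||^{*}+|||\delta|||^{*}$ and $\epsilon\ll1$, substituting the Theorem~\ref{thm:iterations} bound $|||\delta|||^{*}<\epsilon\big(|||\mathbb{Q}_0^{\perp}\nu_{\ell_{\max}}|||^{*}+\frac{4\pi}{\kappa_0}|||\mathbb{Q}_0^{\perp}\mathbb{Q}_{\ell_{\max}}\sigma_f|||^{*}\big)$, converting the dual norms on the right into $H^{\frac12}$-norms via the per-sphere estimate $|||\mathbb{Q}_0^{\perp}\mu|||^{*}\leq(\max r_i)|||\mathbb{Q}_0^{\perp}\mu|||$ valid for $\mu\in W^{\ell_{\max}}$ (because $\text{DtN}^{-1}$ acts as multiplication by $r_i/\ell$ on the $\ell$-th mode), then summing over $\alpha=1,2,3$ and dividing by the denominator gives \eqref{eq:stable} with $C_{\rm stability}$ depending only on $\ell_{\max}$, the dielectric constants, the radii and the minimal separation.

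The step I expect to require the most care is the bookkeeping of the piecewise-constant ($\mathbb{Q}_0$) components. One has to use repeatedly that $\nu_{\ell_{\max}}$ and $\nu_{\ell_{\max}}^{\rm approx}$ share the same, explicitly known, constant part---so that $\delta$ is $\mathbb{Q}_0$-free and $\mathbb{Q}_0(\nu-\nu_{\ell_{\max}})=0$---and then verify that the contributions of this constant part (through $\nu_{\ell_{\max}}^{\rm approx}$ in the second summand above, and through the detour $\mathcal{V}\nu_{\ell_{\max}}=\mathcal{V}\nu-\mathcal{V}(\nu-\nu_{\ell_{\max}})$ forced by the absence of a clean pointwise identity for the discrete solution) reduce to the $\mathbb{Q}_0^{\perp}$ quantities occurring in \eqref{eq:stable}; this is the technically delicate part. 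Everything else is a routine adaptation of the estimates already carried out for Theorem~\ref{thm:Force}.
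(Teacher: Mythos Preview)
Your proposal is correct and aligns with the paper's own treatment: the paper does not give a detailed proof but only states that it ``uses arguments similar to those stated in the proof of our main result Theorem~\ref{thm:Force} together with the estimate given by Theorem~\ref{thm:iterations}'' and defers the details to an external reference. Your plan---mirroring the bilinear splitting and Cauchy--Schwarz/Corollary~\ref{lem:hassan3} machinery of Theorem~\ref{thm:Force} with $(\nu,\nu_{\ell_{\max}})$ replaced by $(\nu_{\ell_{\max}},\nu_{\ell_{\max}}^{\rm approx})$, then invoking Theorem~\ref{thm:iterations} to bound $|||\delta|||^{*}$---is exactly this, and your identification of the $\mathbb{Q}_0$-bookkeeping as the delicate point is apt.
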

		\begin{proof}
			The proof of Lemma \ref{lem:stable} uses arguments similar to those stated in the proof of our main result Theorem \ref{thm:Force} together with the estimate given by Theorem \ref{thm:iterations} from Section \ref{sec:2c}. For the sake of brevity, we omit a formal argument here but a precise proof can be found in \cite[Section 5.3.1, Lemma 5.34]{Hassan_Dis}.
		\end{proof}

		\begin{remark}
			{Consider Lemma \ref{lem:stable}. Essentially, this result states that if one uses an approximation $\nu_{\ell_{\max}}^{\rm approx}\in W^{\ell_{\max}}$ to the true solution $\nu_{\ell_{\max}} \in W^{\ell_{\max}}$ of the Galerkin discretisation \eqref{eq:Galerkina}, with relative tolerance $\epsilon$ as detailed in Theorem \ref{thm:iterations}, to compute the approximate electrostatic forces, then the relative error in these forces (with respect to the true approximate electrostatic forces) is bounded by $\epsilon$ times the constant $C_{\rm stability}$ which does not explicitly depend on $N$. Since the tolerance $\epsilon$ can be controlled by modifying the linear solver tolerance used when computing $\nu_{\ell_{\max}}^{\rm approx}$, it follows that for any geometrical configuration in the family of geometries $\{\Omega_{\mathcal{F}}\}_{\mathcal{F} \in \mathcal{I}}$ satisfying {\textbf{ A1)-A3)}}, this relative error in the forces can be made arbitrarily small \underline{independent of the number of dielectric spheres $N_{\mathcal{F}}$.}}
		\end{remark}
		
		\noindent	We are now ready to state our solution strategy for computing the approximate electrostatic forces. Given a known free charge $\sigma_f \in {H}^{-\frac{1}{2}}(\partial \Omega)$, the goal is to obtain for each $i \in \{1, \ldots, N\}$, the approximate net electrostatic force $\boldsymbol{F}_i^{\ell_{\max}} \in \mathbb{R}^3$ acting on the dielectric particle represented by $\Omega_i$.

		\begin{enumerate}
			\item[Step 1:] Fix $\ell_{\max} \in \mathbb{N}$ and compute-- up to some fixed tolerance-- the approximate solution $\nu_{\ell_{\max}}^{\rm approx} \in W^{\ell_{\max}}$ to the Galerkin discretisation \eqref{eq:Galerkina}, thereby obtaining the approximate local spherical harmonic expansion coefficients of $\nu^{\rm approx}_{\ell_{\max}}$ on the spheres $\{\partial \Omega_i\}_{i=1}^N$. This computation can be done according to the solution strategy presented in \cite{Hassan2}. In view of Theorem \ref{thm:iterations}, (see also \cite{Hassan2}) and Lemma \ref{lem:stable}, the computational cost of obtaining $\nu^{\rm approx}_{\ell_{\max}}$ with a fixed and given error tolerance is $\mathcal{O}(N)$ for any geometrical configuration belonging to the family of geometries $\{\Omega_{\mathcal{F}}\}_{\mathcal{F} \in \mathcal{I}}$ satisfying {\textbf{ A1)-A3)}}.
			
			\item[Step 2:] Compute $\lambda^{\rm approx}_{\ell_{\max}}:=\mathbb{P}_{\ell_{\max}+1}\mathcal{V}\nu^{\rm approx}_{\ell_{\max}}$. This gives access to the local spherical harmonic expansion coefficients of $\lambda^{\rm approx}_{\ell_{\max}}$ up to order $\ell_{\max}+1$ on the spheres $\{\partial \Omega_i\}_{i=1}^N$. Notice that in view of Definition \ref{def:Force2_approx} of the approximate electrostatic force and proof of Lemma \ref{lem:hassan}, we require only the expansion coefficients up to order $\ell_{\max}+1$. Due to the use of the FMM, the computational cost of this step is also $\mathcal{O}(N)$.
			
			\item[Step 3:] Compute for each $\alpha=1,2,3$ and all $1\leq \ell \leq \ell_{\max}+1$, $-\ell \leq m \leq \ell$ the partial derivatives
			\begin{align*}
			\partial_{\alpha} \left(\vert \bold{x}\vert^{\ell} \mathcal{Y}_{\ell}^m\left(\frac{\bold{x}}{\vert \bold{x}\vert}\right)\right), ~ ~\text{where } \bold{x} \in \mathbb{S}^2.
			\end{align*}
			
			These derivatives can be computed analytically so the computational cost of this step is $\mathcal{O}(1)$.
			
			
			\item[Step 4:] Using the expansion coefficients from Steps 1 and 2, the partial derivatives from Step 3, and the representation of $\partial_{\alpha}\phi^{\rm approx}_{i, {\rm exc}} $ given by Equation \eqref{eq:Hassan3}, compute for each $\alpha=1,2,3$ and $i=1, \ldots, N$, the trace $ \mathbb{P}_{\ell_{\max}, i}\gamma_i^-\big(\boldsymbol{E}^{\rm approx}_i\big)_{\alpha}$ of the approximate $i$ excluded electric field. This step requires $\mathcal{O}(N)$ operations.
			
			\item[Step 5:] The approximate electrostatic forces $\{\widehat{\boldsymbol{F}}^{\ell_{\max}}\}_{i=1}^N$ acting on the dielectric particles represented by $\{\Omega_i\}_{i=1}^N$ can then be obtained by computing the integrals
			\begin{align*}
			\widehat{\boldsymbol{F}}_i^{\ell_{\max}}= \kappa_0\int_{\partial \Omega_i}\nu^{\rm approx}_{\ell_{\max}} (\bold{x})\big( \mathbb{P}_{\ell_{\max}, i}\gamma_i^-\boldsymbol{E}^{\rm approx}_i\big)(\bold{x})\, d\bold{x}, \quad i=1, \ldots, N.
			\end{align*}
			
			The computational cost of this step is also $\mathcal{O}(N)$. 
			
		\end{enumerate}
	}

	{ We conclude this subsection by emphasising once again the key implication of Theorem \ref{thm:Force} and the solution strategy stated above. Given a geometrical configuration belonging to the family of geometries $\{\Omega_{\mathcal{F}}\}_{\mathcal{F} \in \mathcal{I}}$ satisfying {\textbf{ A1)-A3)}}, which consists of a system of $N_{\mathcal{F}}$ interacting dielectric particles, we can compute- up to any given error tolerance-- the electrostatic forces acting on each spherical dielectric particle in $\mathcal{O}(N_{\mathcal{F}})$ operations. In other words our numerical method for computing the forces is linear scaling in cost. Since Theorem \ref{thm:Force} yields $N$-independent error estimates for the electrostatic forces, the method is also $N$-error stable. We can therefore conclude that under the geometrical assumptions {\textbf{ A1)-A3)}}, the numerical method for obtaining the electrostatic forces described in this contribution is indeed \emph{linear scaling in accuracy}, i.e., the computational cost of obtaining the approximate forces up to a fixed relative error scales linearly in $N$.}
	
	\subsection{Numerical Experiments}\label{sec:5b}
	As mentioned in the solution strategy in Section \ref{sec:5a}, we use the FMM to compute matrix-vector products involving the global single layer boundary operator $\mathcal{V}$. This allows us to achieve the required linear scaling in cost albeit, at the cost of introducing a controllable FMM approximation error. Standard FMM libraries usually accept only point charges as inputs. To suit our needs, we have used instead a modification of the ScalFMM library (see \cite{lindgren2018} for an explanation of the modification and \cite{agullo2014task,blanchard2015scalfmm} for details on the ScalFMM library). Additionally, we have used the Krylov subspace solver GMRES (see, e.g., \cite{Saad,saad1986gmres}) to solve all underlying linear systems. In the sequel, the numerical tests one through three, which were designed to test the accuracy of our numerical algorithm were performed using a single level FMM octree and with the GMRES tolerance set to $10^{-11}$. This prevents the introduction of the FMM approximation error and linear solver error respectively. 
	
	\begin{figure}[h!]
		\centering
		\begin{subfigure}[t]{0.4\textwidth}
			\centering
			\includegraphics[width=\textwidth]{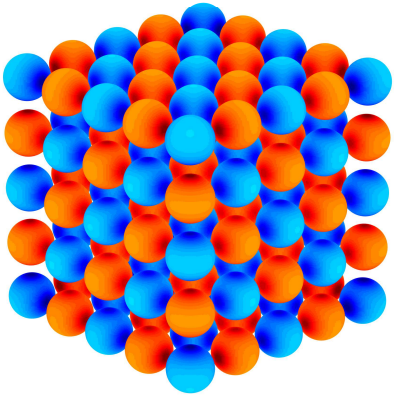} 
			\caption{Dielectric spheres with unit positive or negative charge, arranged in an alternating fashion on a three dimensional, regular cubic lattice.}
			\label{fig:11}
		\end{subfigure}\hfill
		\begin{subfigure}[t]{0.4\textwidth}
			\centering
			\includegraphics[width=\textwidth]{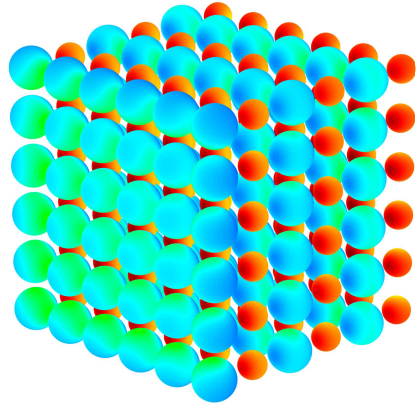} 
			\caption{Dielectric spheres with unit positive or negative charge, arranged in alternating layers on a three dimensional, regular cubic lattice. }
			\label{fig:12}
		\end{subfigure}
		\caption{The two basic geometric settings we use for the majority of our numerical experiments. Different colours indicate the degree of polarisation; red indicates positive and blue indicates negative charge.}
	\end{figure}
	
	With one exception, the numerical experiments in this section were performed on two basic geometrical settings. Both settings consist of the same two types of dielectric spherical particles, the first with radius 3, dielectric constant 10 and carrying unit negative free charge and the second with radius 2, dielectric constant 5 and carrying unit positive free charge, arranged on a regular cubic lattice of edge length $E$. In the first setting however, the lattice is organised such that the positive and negatively charged particles are arranged in an alternating fashion and the edge length $E$ is set to 6. In contrast, the lattice in the second setting is organised such that like-charged particles are arranged in layers, and we set $E=7$. Figures \ref{fig:11} and \ref{fig:12} display the first and second types of lattice structures respectively. In both cases, the external medium is assumed to be vacuum, i.e.,~$\kappa_0=1$. The total number of dielectric particles will typically vary from experiment to experiment.\\

	\noindent {\textbf{Test 1: Exponential Convergence}}~
	Our first set of numerical experiments is designed to demonstrate the exponential convergence of the approximate electrostatic forces. We set the total number $N$ of dielectric spherical particles to be $125$ in the case of the first lattice (see Figure \ref{fig:11}) and 216 in the case of the second lattice (see Figure \ref{fig:12}) and compute the average error in the approximate forces $\{\boldsymbol{F}^{\ell_{\max}}_{i}\}_{i=1}^N$  for different values of $\ell_{\max}$. The results are displayed in Figures \ref{fig:21} and \ref{fig:22} for the first and second lattice respectively. The reference forces $\{\boldsymbol{F}_i\}_{i=1}^N$ were obtained from the reference solution $\nu$ to the BIE \eqref{eq:3.3a}, which was computed by setting the discretisation parameter $\ell_{\max}=20$. For comparison, we have also plotted the average error in the approximate induced surface charge $\nu_{\ell_{\max}}$.
	
	The numerical results displayed in Figures \ref{fig:21} and \ref{fig:22} have three key features of interest. First, we observe the exponential convergence of the approximate forces predicted by Corollary \ref{thm:exp}. Second, we see that the rate of convergence is slower in the case of the first lattice which has a smaller edge length $E$. This is in agreement with our theoretical results as we explain in the next set of numerical experiments. {Finally, we observe that the convergence rates for the forces are nearly twice those (in exponential terms) of the induced surface charge. This agrees with the well-known phenomenon of the doubling of the convergence rates for linear functionals, which can be demonstrated through the so-called Aubin-Nitsche duality technique. Unfortunately, using such a duality trick leads to convergence rates for the electrostatic forces that cannot be shown to be independent of $N$, and we have therefore not pursued this approach.}
	
	\begin{figure}[h!]
		\centering
		\begin{subfigure}[t]{0.48\textwidth}
			\centering
			\includegraphics[width=\textwidth]{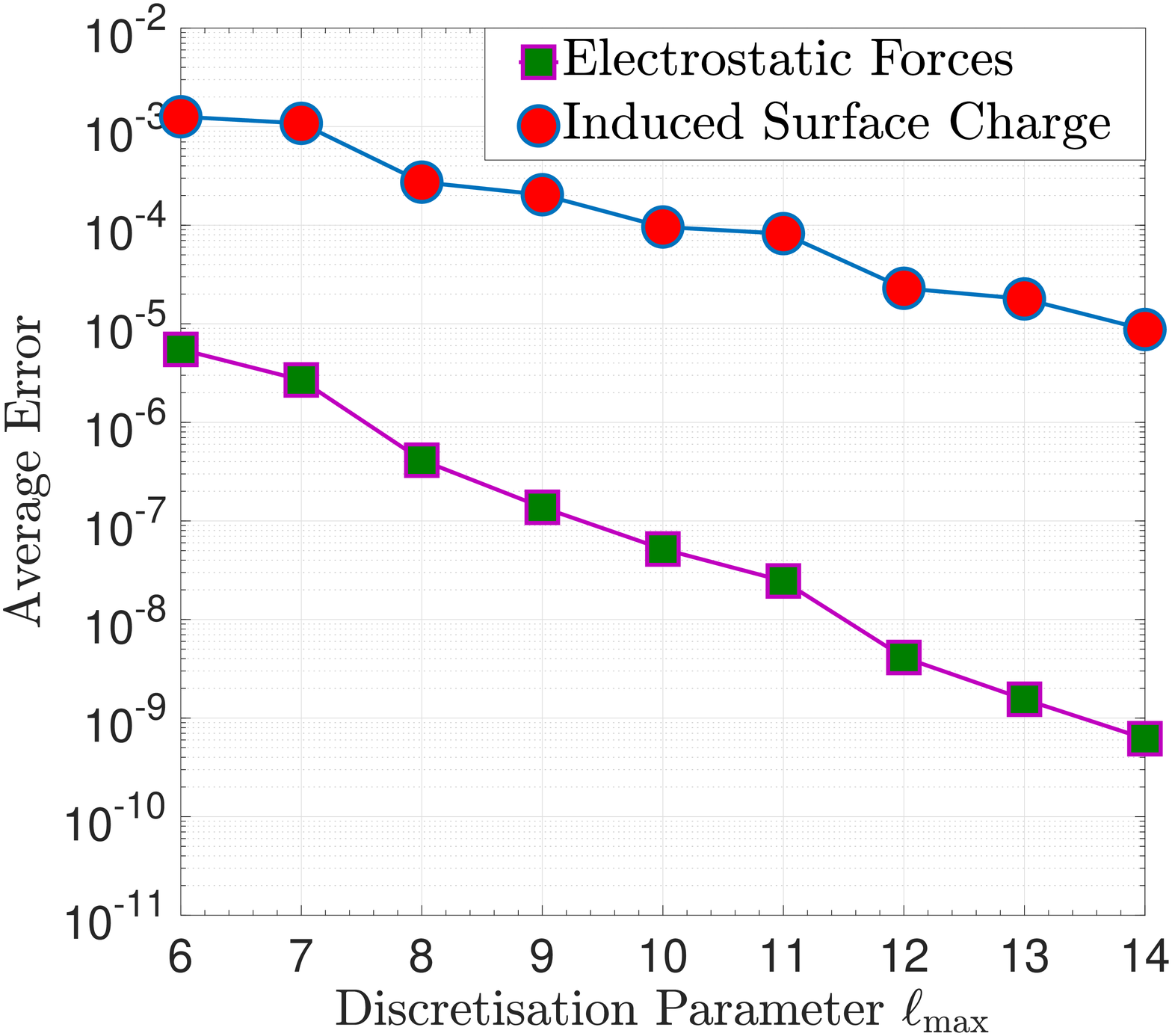} 
			\caption{Numerical results for the first type of lattice structure (Figure \ref{fig:11}). }
			\label{fig:21}
		\end{subfigure}\hfill
		\begin{subfigure}[t]{0.48\textwidth}
			\centering
			\includegraphics[width=\textwidth]{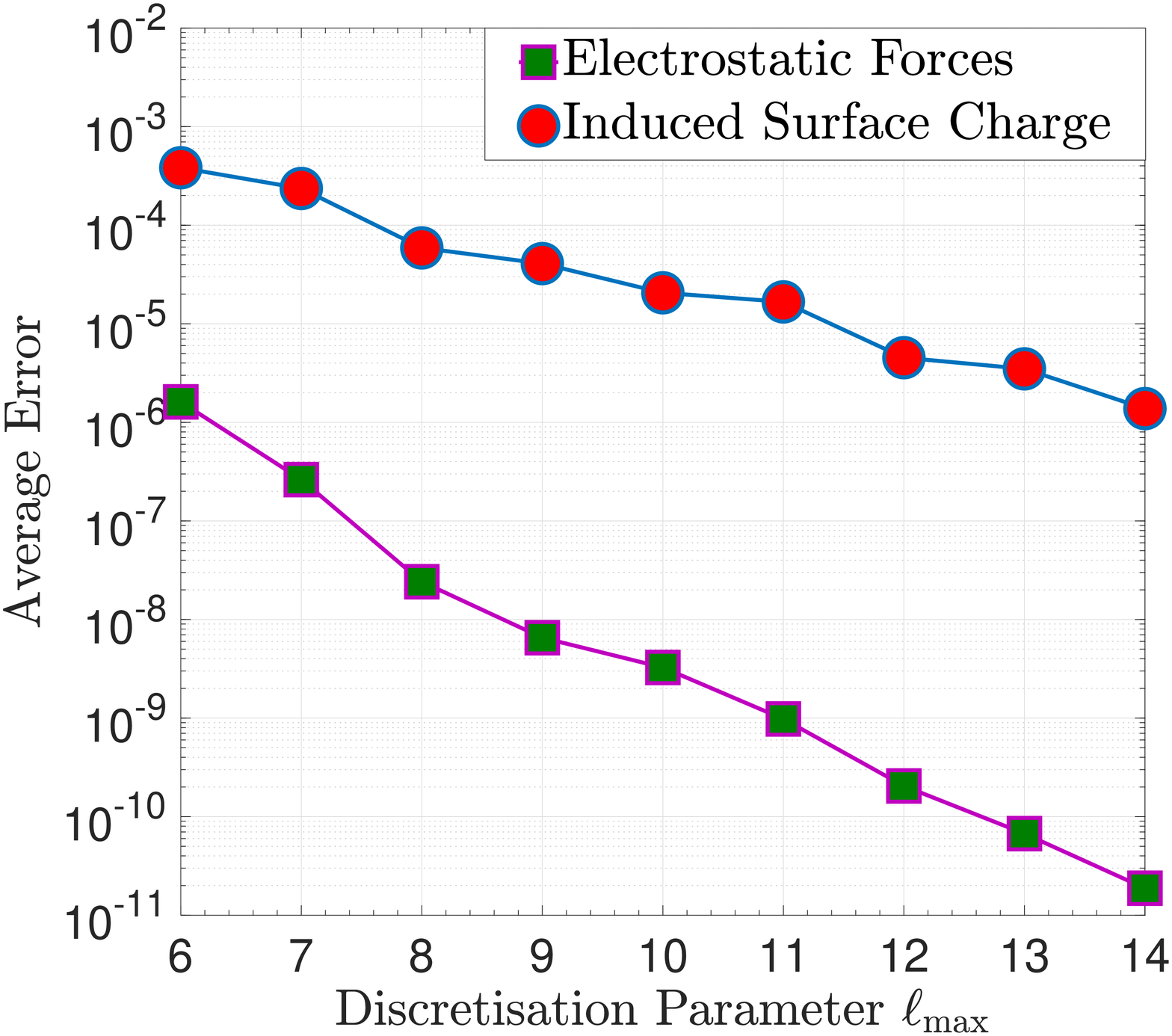} 
			\caption{Numerical results for the second type of lattice structure (Figure \ref{fig:12}).}
			\label{fig:22}
		\end{subfigure}
		\caption{Log-lin plots of the average error in the electrostatic forces and induced surface charge as a function of the discretisation parameter $\ell_{\max}$.  For comparison, the force on each particle is $\mathcal{O}(10^{-2})$ and the charge on each sphere is $\mathcal{O}(1)$.}
	\end{figure}

	\noindent {\textbf{Test 2: Dependence on the Separation Distance}}~
	We now wish to explore in more detail, the dependence of the error in the approximate forces on the minimal inter-sphere separation distance. We recall that the pre-factor $C_{\rm force}$ appearing in the error estimate \eqref{eq:rates_force} for the approximate forces (see Theorem \ref{thm:Force}) depends both on the coercivity constant $c_{\mathcal{V}}$ of the single layer boundary operator as well as on the pre-factor $C_{\rm charges}$ appearing in the error estimate for the induced surface charge (see Theorem \ref{thm:well-posed}). It was shown in the contribution \cite{Hassan1} (see also Lemma \ref{lem:single}) that the constants $c_{\mathcal{V}}$ and $C_{\rm charges}$ grow \emph{at most} as $\mathcal{O}\big(\frac{1}{{\delta}}\big)$ and $\mathcal{O}\big(\frac{1}{\sqrt{\delta}}\big)$ respectively for small $\delta$ where $\delta$ is the minimum inter-sphere separation distance. Consequently, we would expect the error in the approximate forces to also grow as the inter-sphere separation decreases.
	
	We consider two dielectric spheres placed on the $z$-axis at a separation of $s$ with identical dielectric constants $\kappa_1=\kappa_2=100$, fixed radius $r_1=1$ and varying radius $r_2$, and carrying unit negative and positive charge respectively. In order to obtain the true forces $\{\boldsymbol{F}_{i}\}_{i=1}^2$ for very small separations $s$, it is necessary to compute the reference solution $\nu$ to the BIE \eqref{eq:3.3a} using an extremely high value of the discretisation parameter $\ell_{\max}$. Indeed, our numerical tests indicate that an accurate approximation of the reference solution $\nu$ requires that $\ell_{\max} \approx \mathcal{O}(100)$. Our choice of geometry is thus deliberate since the axisymmetry allows us to consider an approximation space~$\widetilde{W}^{\ell_{\max}}\subset W^{\ell_{\max}}$ consisting of only axisymmetric local spherical harmonics expansions.

		\begin{figure}[h]
		\centering
		\begin{subfigure}[t]{0.48\textwidth}
			\centering
			\includegraphics[width=1\textwidth]{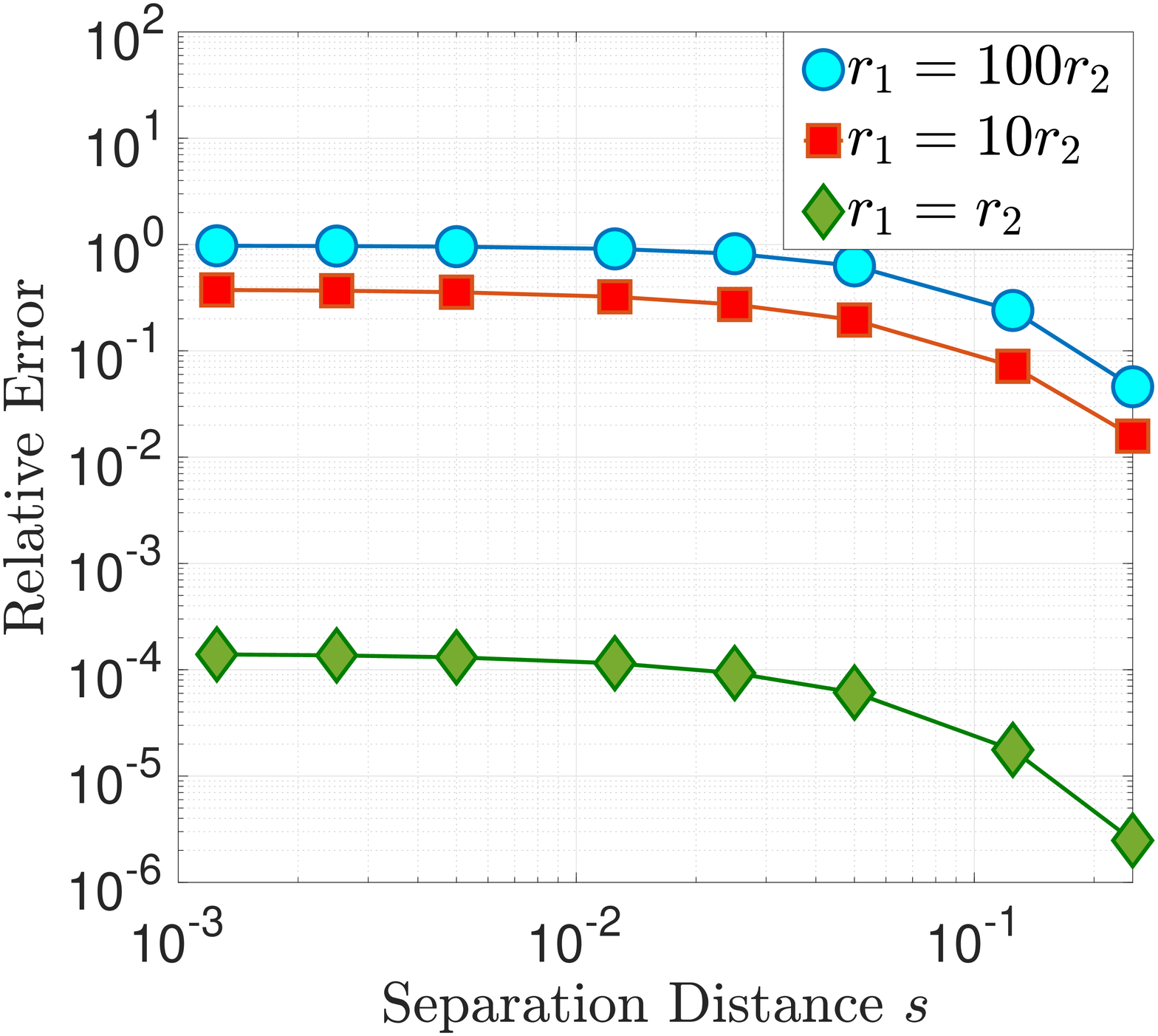} 
			\caption{Relative Error in the electrostatic forces as a function of the separation distance $s$.}
			\label{fig:31}
		\end{subfigure}\hfill
		\begin{subfigure}[t]{0.48\textwidth}
			\centering
			\includegraphics[width=1\textwidth]{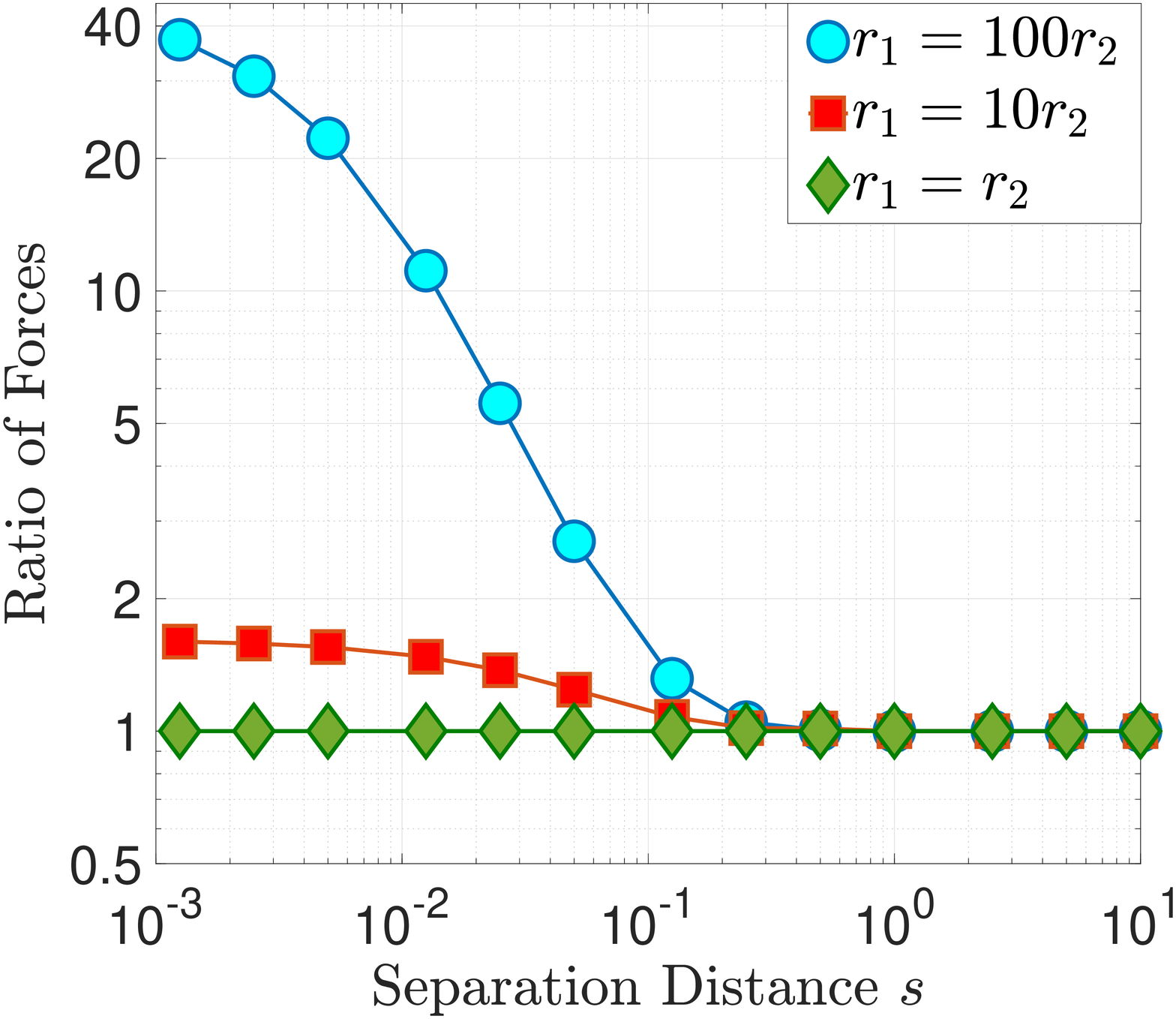} 
			\caption{Ratio of the exact and approximate forces on the first sphere as a function of the separation distance $s$.}
			\label{fig:32}
		\end{subfigure}
		\caption{The effects of the separation distance $s$ on the approximation errors of the electrostatic forces  for fixed discretisation parameter $\ell_{\max}=10$. }
	\end{figure}

	Figure \ref{fig:31} displays the relative error in the approximate electrostatic forces $\{\boldsymbol{F}^{\ell_{\max}}_{i}\}_{i=1}^2$ for $\ell_{\max}=10$. We immediately observe that if the radii of the two spheres are comparable, then the relative errors increase for decreasing $s$ but quickly reach a plateau that is much smaller than one. This indicates that while the relative error in the electrostatic forces does indeed grow for decreasing separation and a fixed $\ell_{\max}$, the forces are still being approximated with a certain degree of accuracy. In contrast, we see that if $r_2 \ll 1$, then the relative errors quickly approach one for small values of $s$, which indicates that the approximations of the forces for this setting essentially become worthless. This conclusion is supported by Figure \ref{fig:32} where we plot the ratio of the exact and approximate force on the first sphere. We observe that the ratio remains close to one if $r_2=r_1=1$  but explodes if $r_2=0.01$. This behaviour is explained by the fact that if $r_2 \ll 1$ and $s\to 0$, then the induced surface charge $\nu$ on the first sphere approaches a singularity at the point of contact, which is poorly represented in the approximation space $W^{\ell_{\max}}$.
	
	{ Let us remark here that a possible strategy for the treatment of point singularities that arise due to small separation distances between the particles has, for instance, been proposed in the contribution \cite{MR3493124}. The authors in \cite{MR3493124} derive analytical expressions for the induced potential both inside and outside a dielectric spherical particle due to a general multipole source using the method of image charges and image potentials. These analytical expressions are then combined with the classical method of moments to construct a hybrid algorithm. Numerical experiments indicate that the hybrid method has significantly better accuracy than the classical method of moments and also leads to solution matrices that do not suffer from ill-conditioning.}\\


	\noindent {\textbf{Test 3: $N$-independence of the Errors}}~
	Next, we demonstrate that the average error in the approximate forces $\{\boldsymbol{F}_i^{\ell_{\max}}\}_{i=1}^N$ is independent of the number $N$ of dielectric particles. We again consider the two types of lattices displayed in Figures \ref{fig:11} and \ref{fig:12}, and we increase $N$ simply by increasing the size of each lattice.
	
	Figures \ref{fig:41} and \ref{fig:42} display the average errors in the approximate electrostatic forces $\{\boldsymbol{F}_i^{\ell_{\max}}\}_{i=1^N}$ as a function of $N$ for three choices of the discretisation parameter, namely, $\ell_{\max}=6$, $\ell_{\max}=9$ and $\ell_{\max}=12$. As before, the true forces $\{\boldsymbol{F}_i\}_{i=1}^N$ were obtained from the reference solution $\nu$ to the BIE \eqref{eq:3.3a}, which was calculated by setting $\ell_{\max}=20$. Clearly the numerical results agree with the $N$-independent error estimate established by Theorem \ref{thm:Force}. We remark that since we are using the FMM with a single level octree, the computational cost of obtaining reference solutions scales as $\mathcal{O}\big(N^2\big)$ which limits the total number of spheres we consider to $N=2197$.
	
	\begin{figure}[h]
		\centering
		\begin{subfigure}[t]{0.48\textwidth}
			\centering
			\includegraphics[width=1\textwidth]{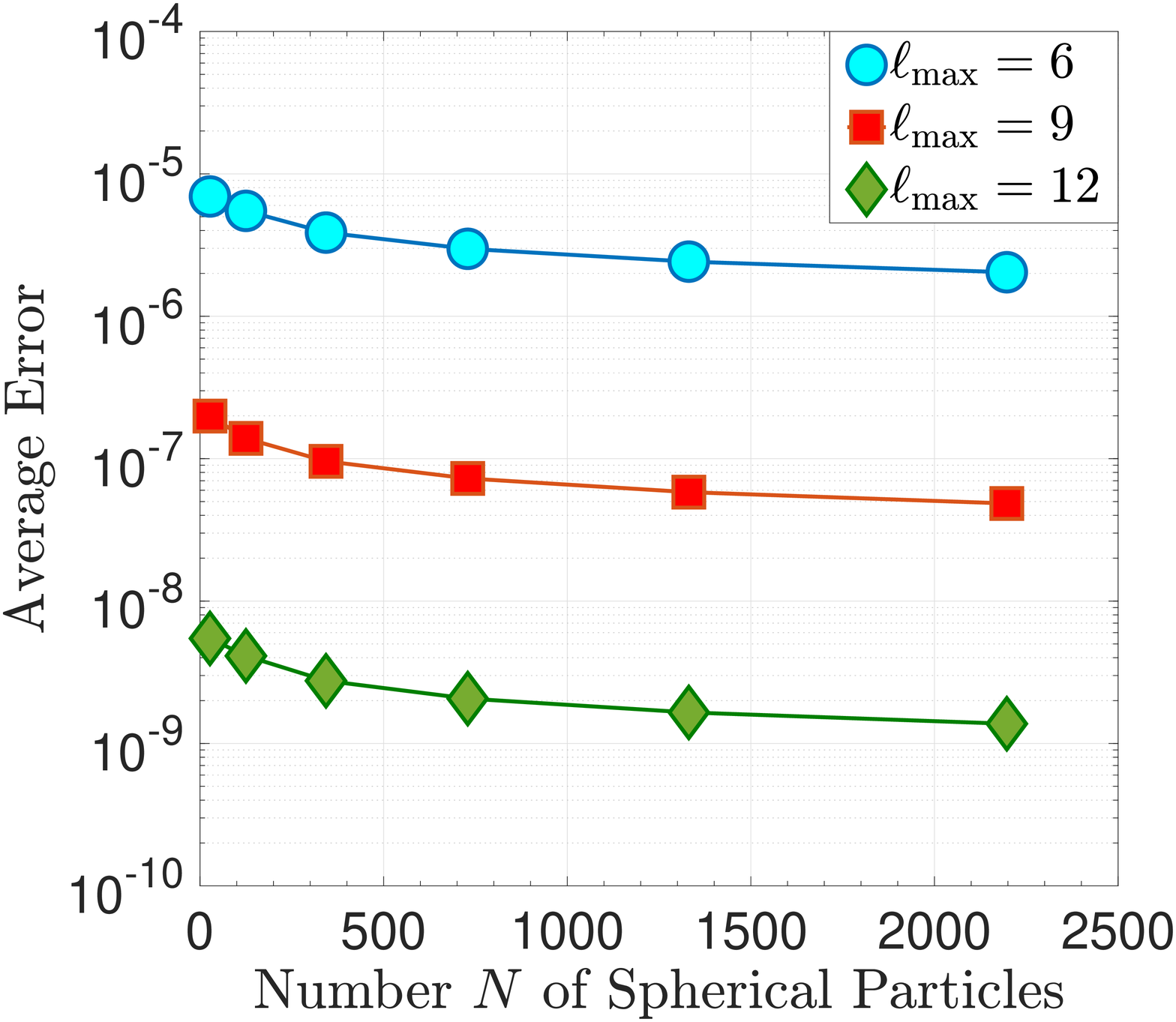} 
			\caption{Numerical results for the first type of lattice structure (Figure \ref{fig:11}). }
			\label{fig:41}
		\end{subfigure}\hfill
		\begin{subfigure}[t]{0.48\textwidth}
			\centering
			\includegraphics[width=1\textwidth]{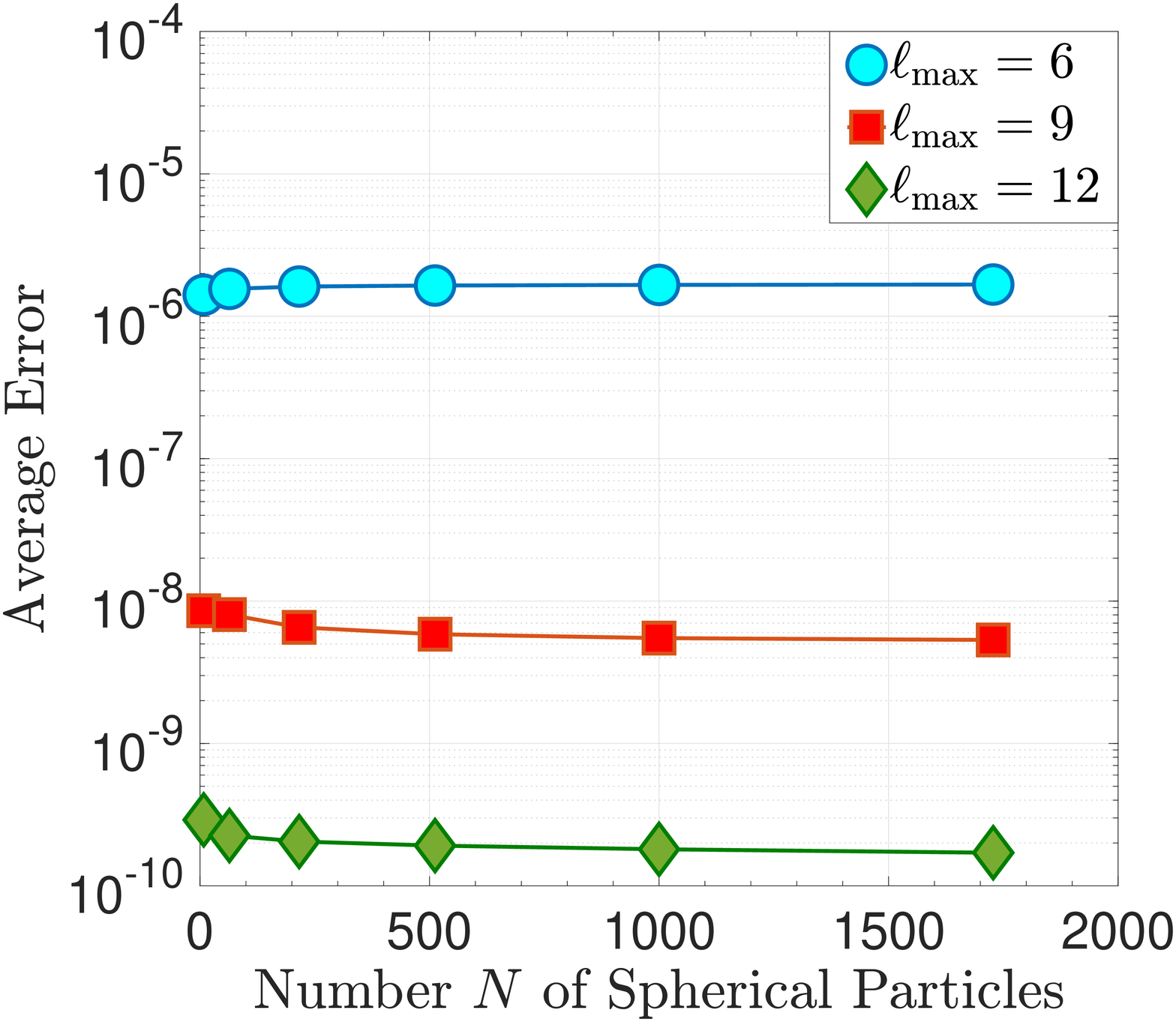} 
			\caption{Numerical results for the second type of lattice structure (Figure \ref{fig:12}).  }
			\label{fig:42}
		\end{subfigure}
		\caption{The average errors in the electrostatic forces as a function of the number $N$ of spherical dielectric particles. For comparison, the force on each particle is $\mathcal{O}(10^{-2})$.}
	\end{figure}
	
	\noindent {\textbf{Test 4: Linear Scaling Computation of the Forces}}~
	The goal of this final set of numerical experiments is to demonstrate that the approximate electrostatic forces $\{\boldsymbol{F}_i^{\ell_{\max}}\}$ can indeed be computed in $\mathcal{O}(N)$ operations for an increasing number $N$ of dielectric spherical particles. In order to achieve this linear scaling behaviour for a given $\ell_{\max}$ and increasing $N$, it is necessary to carefully adjust the two main FMM parameters, i.e., the number of levels $D$ in the octree structure of the bounding box containing all multipole sources, and the maximal degree $P$ of spherical harmonics used in the multipole expansion of the FMM kernel. We remark that the choice of $D$ depends only on the number $N$ of dielectric particles and the choice of $P$ depends only on the discretisation parameter $\ell_{\max}$.

	In the contribution \cite{Hassan2}, the authors performed a detailed numerical study to obtain appropriate values of $D$ and $P$ for dielectric particles arranged in lattice-like configurations. As a rough guide, it was proposed that
	\begin{itemize}
		\item $D$ should be picked so that there are between 4 and 32 particles in each leaf of the FMM octree with a preferred average of $8$. { Note that for an increasing number $N$ of particles, one must increase $D$ in order to achieve the linear complexity of the FMM. On the other hand, if $D$ is too large, then the FMM error could dominate the discretisation error leading to erroneous results (see \cite{Hassan2} for an in-depth discussion).}
		\item $P$ should be fixed so that $P\geq2\ell_{\max}$. Since the computational cost of each FMM call grows as $\mathcal{O}\big(P^3\big)$, it is preferable to pick $P$ as small as possible.
	\end{itemize}
	
	\begin{figure}[h]
		\centering
		\begin{subfigure}[t]{0.48\textwidth}
			\centering
			\includegraphics[width=\textwidth]{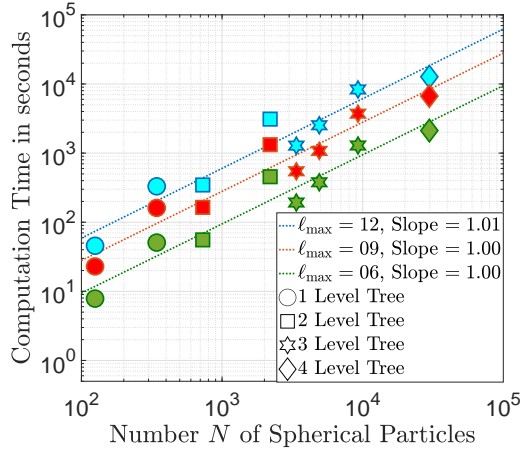} 
			\caption{Computation times for the first type of lattice structure (Figure \ref{fig:11}).}
			\label{fig:51}
		\end{subfigure}\hfill
		\begin{subfigure}[t]{0.48\textwidth}
			\centering
			\includegraphics[width=\textwidth]{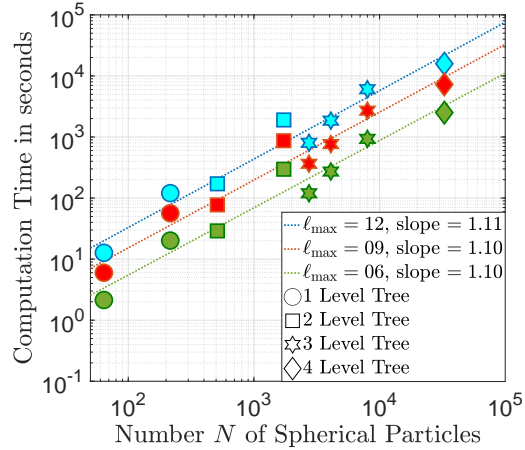} 
			\caption{Computation times for the second type of lattice structure (Figure \ref{fig:12}).}
			\label{fig:52}
		\end{subfigure}
		\caption{Computation times of the electrostatic forces as a function of the number $N$ of spherical dielectric particles.}
	\end{figure}

	Equipped with this methodology for picking the FMM parameters $P$ and $D$, we compute the approximate electrostatic forces $\{\boldsymbol{F}_i^{\ell_{\max}}\}_{i=1}^N$ for the two types of lattice structures \ref{fig:11} and \ref{fig:12} and the  three cases $\ell_{\max}=6, 9 $ and $12$. All numerical simulations were performed on a 2016 MacBook laptop with a 2.6 GHz Intel Core~i7 processor and 16GB of 2133 MHz LPDDR3 memory. Additionally, we set the linear solver tolerance to $10^{-6}$, $10^{-8}$ and $10^{-10}$ and the FMM parameter $P$ to $15, 20$ and $25$ in the cases $\ell_{\max}=6$, $\ell_{\max}=9$ and $\ell_{\max}=12$ respectively. Our results are displayed in Figures \ref{fig:51} and \ref{fig:52} and indicate excellent agreement with linear scaling behaviour.

	\section{Conclusion and Outlook}
	
	In this article, we have proposed and analysed an efficient numerical method for the computation of the electrostatic forces acting on a collection of dielectric spherical particles embedded in a homogenous polarisable medium and undergoing mutual polarisation. Our method is based on the Galerkin boundary integral equation framework proposed by Lindgren et al. \cite{lindgren2018} for the computation of the electrostatic energy of this system of dielectric particles, and uses the fast multipole method to compute matrix vector products involving the underlying solution matrix in linear scaling complexity. 
	
	{ Our main result is to prove that under appropriate assumptions on the types of geometrical configurations we consider, our proposed numerical method achieves \underline{linear} \underline{scaling in accuracy} for the computation of the forces, i.e., given a system composed of $N$ dielectric spherical particles, it requires only $\mathcal{O}(N)$ operations to calculate the approximate forces with a given average or relative error. In order to establish this result, we derived $N$-independent convergence rates for the approximate forces which yielded, as a corollary, exponential convergence of the approximate forces under suitable regularity assumptions. We also provided numerical evidence supporting our theoretical results.}

	There are two natural extensions of this work. First, the dielectric particles considered herein are assumed to have homogenous (but not necessarily identical) dielectric constants. A next step could be to extend our analysis and proposed method to spherical particles with spatially varying dielectric constants. Additionally, it would be of interest to explore if similar results can be proven in the case of forces arising from different potentials. In particular, one could consider the screened Coulomb potential encountered in the study of dielectric particles in an ionic solvent.


\bibliographystyle{siam}    
\bibliography{refs.bib}

\newpage
\appendix

\section{Electrostatic Energy-Based Definition of the Forces}\label{sec:appendix}

As discussed in Section \ref{sec:3} of our article, the electrostatic forces between charged dielectric particles can be defined either in terms of the electric field-- an approach found in the physics literature-- or in terms of the electrostatic energy, which is the approach favoured by the computational chemistry community. The goal of the following exposition is to present the electrostatic energy-based definition of the electrostatic forces and to demonstrate that this formalism is mathematically equivalent to the electric field-based definition presented in Section \ref{sec:3} of our article and used to derive error estimates in Section~\ref{sec:4}.

In the sequel, we will assume the setting of Sections \ref{sec:2} and \ref{sec:3} of our article. We begin with the formal definition of the energy function and the exact electrostatic energy. 

\begin{definition}[Energy Function and Exact Electrostatic Energy]\label{def:Energy}~
	We define the energy function $\mathcal{E} \colon H^{-\frac{1}{2}}(\partial \Omega) \times H^{-\frac{1}{2}}(\partial \Omega) \rightarrow $ as the mapping with the property that for all $\sigma_1, \sigma_2 \in H^{-\frac{1}{2}}(\partial \Omega)$ it holds that
	\begin{align*}
	\mathcal{E}(\sigma_1, \sigma_2):=  \frac{1}{2}\,4\pi\,\langle  \sigma_1, \mathcal{V}\sigma_2\rangle_{\partial \Omega}=\frac{1}{2}\,4\pi\,\langle  \sigma_2, \mathcal{V}\sigma_1\rangle_{\partial \Omega}.
	\end{align*}
	
	Furthermore, if $\sigma_f \in H^{-\frac{1}{2}}(\partial \Omega)$ and $\nu\in H^{-\frac{1}{2}}(\partial \Omega)$ denotes the solution to the boundary integral equation \eqref{eq:3.3a} with right-hand side generated by $\sigma_f$, then we define the exact electrostatic energy of the system of $N$ dielectric particles carrying free charge $\sigma_f$ as 
	\begin{align*}
	\mathcal{E}_{\sigma_f}^{\rm exact}:=\mathcal{E}(\sigma_f, \nu).
	\end{align*}
\end{definition}

\begin{remark}
	Consider Definition \ref{def:Energy} of the total electrostatic energy. The factor $4\pi$ appears in the this definition as a pre-factor because the right-hand side of the BIE \eqref{eq:3.3a} contains the term $4\pi$. Obviously, this factor has no bearing on the analysis.
\end{remark}

The exact electrostatic forces are now defined as follows.

\begin{definition}[Second Definition of the Forces]\label{def:Force1}~
	Let $\sigma_f \in H^{-\frac{1}{2}}(\partial \Omega)$ be a given free charge, let $\nu \in H^{-\frac{1}{2}}(\partial \Omega)$ denote the unique solution to the BIE \eqref{eq:3.3a} with right-hand side generated by $\sigma_f$, and let $\mathcal{E}_{\sigma_f}^{\rm exact}$ denote the total electrostatic energy of this system as defined by Definition \ref{def:Energy}. Then for each $i=1, \ldots, N$ we define the net force acting on the dielectric particle represented by $\Omega_i$ as the vector $\widetilde{\boldsymbol{F}}_i \in \mathbb{R}^3$ given by
	\begin{align*}
	\widetilde{\boldsymbol{F}}_i:= -\nabla_{\bold{x}_i} \mathcal{E}_{\sigma_f}^{\rm exact},
	\end{align*}
	where the gradient is taken with respect to the location $\bold{x}_i \in \mathbb{R}^3$ of the centre of the open ball $\Omega_i$.
\end{definition}

Some remarks are now in order.

\begin{remark}
	Consider Definitions \ref{def:Energy} and \ref{def:Force1}. We observe that the total electrostatic energy $\mathcal{E}_{\sigma_f}^{\rm exact}$ is, in particular, a function of the induced surface charge $\nu$, and since $\nu$ is the solution to the BIE \eqref{eq:3.3a}, it will implicitly depend on the locations $\{\bold{x}_i\}_{i=1}^N$ of the centres of the open balls $\{\Omega_i\}_{i=1}^N$. Thus, the exact electrostatic energy also implicitly depends on $\{\bold{x}_i\}_{i=1}^N$. 
\end{remark}

\begin{remark}
	It is possible to give an intuitive interpretation of Definition \ref{def:Force1} of the electrostatic forces. Indeed, assume that the free charge $\sigma_f$ and other physical parameters such as the dielectric constants and the radii $\{r_i\}_{i=1}^N$ of the open balls $\{\Omega_i\}_{i=1}^N$ are fixed. Then the resulting induced surface charge $\nu$ is uniquely determined by the locations $\{\bold{x}_i\}_{i=1}^N$ of the centres of the open balls $\{\Omega_i\}_{i=1}^N$. Thus the electrostatic energy $\mathcal{E}(\sigma_f, \cdot)$ can be viewed as a function of $\{\bold{x}_i\}_{i=1}^N$. The set of values of $\mathcal{E}(\sigma_f, \cdot)$ for all admissible sphere centres $\{\bold{x}_i\}_{i=1}^N$ defines a so-called potential energy surface (PES), and the graph of this PES is a $3N$-dimensional manifold. Consequently, given a fixed choice of sphere centres $\{\bold{x}_i\}_{i=1}^N$, the force acting on each dielectric particle (up to a scaling factor) is given by the negative gradient of the PES at the point $\{\bold{x}_i\}_{i=1}^N$.
\end{remark}

Definition \ref{def:Force1} of the electrostatic forces requires us to first compute the exact electrostatic energy $\mathcal{E}_{\sigma_f}^{\rm exact}=\mathcal{E}(\sigma_f, \nu)$. Of course in practice, $\mathcal{E}^{\rm exact}_{\sigma_f}$ is not known since $\sigma_f\in H^{-\frac{1}{2}}(\partial \Omega)$ may be infinite-dimensional and the exact induced surface charge $\nu \in H^{-\frac{1}{2}}(\partial \Omega)$ that solves the BIE \eqref{eq:3.3a} is not known. It is therefore necessary to define \emph{approximate} electrostatic forces in terms of a \emph{discrete} electrostatic energy. More precisely, we have the following definitions:

\begin{definition}[Discrete Electrostatic Energy]\label{def:Energy_approx}~
	Let $\sigma_f \in H^{-\frac{1}{2}}(\partial \Omega)$ be a given free charge, let $\ell_{\max} \in \mathbb{N}$, and let $\nu_{\ell_{\max}}$ be the unique solution to the Galerkin discretisation (2.4) with right-hand side generated by $\sigma_f$. We define the discrete electrostatic energy of the system of $N$ dielectric particles carrying free charge $\sigma_f$ as
	\begin{align*}
	\mathcal{E}_{\sigma_f}^{\ell_{\max}}:= \mathcal{E}(\mathbb{Q}_{\ell_{\max}}\sigma_f, \nu_{\ell_{\max}}).
	\end{align*}
	
\end{definition}

The approximate forces are then defined as follows.

\begin{definition}[Second Definition of the Approximate Forces]\label{def:Force1_approx}~
	Let $\sigma_f \in H^{-\frac{1}{2}}(\partial \Omega)$ be a given free charge, let $\ell_{\max} \in \mathbb{N}$, let $\nu_{\ell_{\max}}$ be the unique solution to the Galerkin discretisation \eqref{eq:Galerkina} with right-hand side generated by $\sigma_f$, and let $\mathcal{E}_{\sigma_f}^{\ell_{\max}}$ denote the discrete electrostatic energy of this system as defined by Definition~\ref{def:Energy_approx}. Then for each $i=1, \ldots, N$ we define the approximate net force acting on the dielectric particle represented by $\Omega_i$ as the vector $\widetilde{\boldsymbol{F}}^{\ell_{\max}}_i \in \mathbb{R}^3$ given by
	\begin{align*}
	\widetilde{\boldsymbol{F}}^{\ell_{\max}}_i:= -\nabla_{\bold{x}_i} \mathcal{E}_{\sigma_f}^{\ell_{\max}},
	\end{align*}
	where the gradient is taken with respect to the location $\bold{x}_i \in \mathbb{R}^3$ of the centre of the open ball $\Omega_i$.
\end{definition}

\begin{remark}
	Consider Definition \ref{def:Energy_approx} of the discrete electrostatic energy. In analogy with the exact electrostatic energy case, we observe that the discrete electrostatic energy  $\mathcal{E}(\mathbb{Q}_{\ell_{\max}}\sigma_f, \cdot)$ defines a discrete potential energy surface (dPES) for different locations of the sphere renters $\{\bold{x}_i\}_{i=1}^N$. Moreover, considering Definition \ref{def:Force1_approx}, we see that the approximate electrostatic force is defined precisely in terms of the negative gradient of the dPES at the point $\{\bold{x}_i\}_{i=1}^N$.
	
	It can now be seen why the computational chemistry community finds Definition~\ref{def:Force1} of the electrostatic forces appealing. Indeed, suppose that we wish to numerically simulate the movement of charged dielectric particles due to the electrostatic forces. Then at each given time step, we have by construction that the approximate electrostatic forces are consistent with the dPES. Consequently, if one uses a symplectic method to perform time-integration, then the total (discrete) energy of the system can be maintained as a conserved quantity (up to a perturbation).
\end{remark}

Finally, we have the following result on the equivalence between the electrostatic energy-based formalism and electric field-based methodology for defining the electrostatic forces. 
\begin{theorem}\label{thm:equivalence}
	Let $\sigma_f \in H^{-\frac{1}{2}}(\partial \Omega)$ be a given free charge, let ${\ell_{\max}} \in \mathbb{N}$, let $\nu \in H^{-\frac{1}{2}}(\partial \Omega)$ and $\nu_{\ell_{\max}} \in W^{\ell_{\max}}$ denote the solutions to the BIE \eqref{eq:3.3a} and Galerkin discretisation \eqref{eq:Galerkina} respectively with right-hand sides generated by $\sigma_f$, and for each $i \in \{1, \ldots, N\}$ let $\boldsymbol{F}_i, \widetilde{\boldsymbol{F}}_i \in \mathbb{R}^3$ denote the exact electrostatic forces as defined by Definitions \ref{def:Force2} and \ref{def:Force1} respectively and let $\boldsymbol{F}^{\ell_{\max}}_i, \widetilde{\boldsymbol{F}}^{\ell_{\max}}_i \in \mathbb{R}^3 $ denote the approximate electrostatic forces as defined by Definitions \ref{def:Force2_approx} and \ref{def:Force1_approx} respectively. Then for all $i \in \{1, \ldots, N\}$ it holds that
	\begin{align*}
	\widetilde{\boldsymbol{F}}_i = \boldsymbol{F}_i, \qquad \text{and} \qquad \widetilde{\boldsymbol{F}}^{\ell_{\max}}_i=\boldsymbol{F}^{\ell_{\max}}_i.
	\end{align*}
\end{theorem}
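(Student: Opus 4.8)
The plan is to prove both identities by shape differentiation of the electrostatic energy, made tractable by first re-expressing that energy as a \emph{symmetric} Dirichlet-type functional of the electric potential so that an envelope (Hellmann--Feynman) argument applies. For the exact identity, I would first rewrite $\mathcal{E}^{\rm exact}_{\sigma_f}=\mathcal{E}(\sigma_f,\nu)=2\pi\langle\sigma_f,\mathcal{V}\nu\rangle_{\partial\Omega}$ in terms of the potential $\phi=\mathcal{S}\nu$ of Definition \ref{def:pot}: using the jump relations of the single layer potential together with the integral equation \eqref{eq:3.3a} one checks that $\phi$ (with $\kappa$ extended by $\kappa_0$ on $\Omega^+$) solves the dielectric transmission problem with surface source $4\pi\sigma_f$, i.e. $\phi$ is the unique minimiser over the natural energy space on $\mathbb{R}^3$ of the symmetric functional $J(v):=\tfrac{1}{2}\int_{\mathbb{R}^3}\kappa\,|\nabla v|^2-4\pi\langle\sigma_f,\gamma v\rangle$, and $\mathcal{E}^{\rm exact}_{\sigma_f}$ is, up to a fixed positive constant, equal to $-J(\phi)$. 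This detour is what makes the argument work: applying Hellmann--Feynman directly to $\langle\sigma_f,\mathcal{V}\nu\rangle$ stalls, because multiplication by $\tfrac{\kappa_0-\kappa}{\kappa_0}$ does not commute with $\text{DtN}$ and so the adjoint of the integral operator in \eqref{eq:3.3a} is not simply related to the operator itself, whereas the $\kappa$-weighted Dirichlet energy carries no such obstruction.

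Fixing $i$ and a component $\alpha$, I would translate only $\Omega_i$ by $t\bold{e}_\alpha$, holding all other centres, all radii, all dielectric constants and the (rigidly attached) free charge fixed; by assumption \textbf{A2} everything depends analytically on $t$ for small $|t|$, and $\phi=\phi(t)$ is differentiable by Theorem \ref{thm:well-posed}. Since $\phi(t)$ minimises $J=J_t$, the envelope theorem yields $\tfrac{d}{dt}\mathcal{E}^{\rm exact}_{\sigma_f}$ as (a multiple of) the partial $t$-derivative of $J_t$ with $\phi$ held fixed in space, which picks up only the explicit $t$-dependence through the characteristic function of $\Omega_i(t)$ and through $\partial\Omega_i(t)$ in the source term; a transport-theorem computation converts this into a surface integral over $\partial\Omega_i$ of the Maxwell stress tensor of $\phi$ paired with $\bold{e}_\alpha$. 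It then remains to identify this integral with $\boldsymbol{F}_i$: (i) the contribution of the part of $\phi$ generated by the charge on $\partial\Omega_i$ itself produces zero net force --- the self-interaction kernel is translation invariant (equivalently, Newton's third law) --- which is exactly why only the \emph{excluded} potential $\phi_{i,{\rm exc}}$ and field $\boldsymbol{E}_i$ of Definition \ref{def:field2} enter; and (ii) the surviving cross term, whose sole explicit kernel derivative is $\partial^{\bold{x}}_\alpha\tfrac{1}{4\pi|\bold{x}-\bold{y}|}$, reduces via the jump relations and $\boldsymbol{E}_i=-\nabla\phi_{i,{\rm exc}}$ to $\kappa_0\int_{\partial\Omega_i}\nu\,(\boldsymbol{E}_i)_\alpha\,d\bold{x}$. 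Hence $\widetilde{\boldsymbol{F}}_i=-\nabla_{\bold{x}_i}\mathcal{E}^{\rm exact}_{\sigma_f}=\kappa_0\int_{\partial\Omega_i}\nu\,\boldsymbol{E}_i\,d\bold{x}=\boldsymbol{F}_i$.

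The discrete identity is proved along the same lines with $\phi_{\ell_{\max}}=\mathcal{S}\nu_{\ell_{\max}}$ in place of $\phi$ and $\mathbb{Q}_{\ell_{\max}}\sigma_f$ in place of $\sigma_f$; crucially, the approximation space $W^{\ell_{\max}}$, being spanned by spherical harmonics centred on the individual spheres, is invariant under the rigid translation and hence $t$-independent in the reference frame, as are all the projection operators, and $\boldsymbol{E}_i^{\ell_{\max}}$ is built from the same continuous single layer potential, so step (ii) above is unchanged. The main obstacles are: (a) the shape derivative in step (ii), where ``$\dot\kappa\,|\nabla\phi|^2$'' is not meaningful as written since $|\nabla\phi|^2$ jumps across $\partial\Omega_i$ and must be handled by splitting $\nabla\phi$ into its continuous tangential and discontinuous normal parts --- the classical subtlety in computing forces on dielectric interfaces, resolved by the Maxwell stress tensor --- followed by reconciling the outcome with the clean Lorentz integral; and (b) in the discrete case the Galerkin discretisation of the \emph{non-symmetric} equation \eqref{eq:Galerkina} is a Petrov--Galerkin rather than a Ritz approximation of $J$, so $\phi_{\ell_{\max}}$ does not minimise $J$ over a subspace and one must instead differentiate $\mathcal{E}^{\ell_{\max}}_{\sigma_f}$ directly, cancelling the state-derivative term $\dot\nu_{\ell_{\max}}$ by means of \eqref{eq:Galerkina} and its transpose while tracking the interplay of $\mathbb{Q}_{\ell_{\max}}$, $\mathbb{P}_{\ell_{\max}}$ and the $L^2$-projection onto $W^{\ell_{\max}}$. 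A secondary, routine matter is justifying the $t$-differentiability of $\phi(t)$ (and of $\nu_{\ell_{\max}}(t)$) and the interchange of differentiation with the duality pairings, which follows from the analytic dependence of the operators on $t$ and the uniform invertibility in Theorem \ref{thm:well-posed}.
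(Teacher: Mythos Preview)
Your assertion that applying Hellmann--Feynman directly to $\langle\sigma_f,\mathcal{V}\nu\rangle$ ``stalls'' is mistaken, and the detour through the Dirichlet functional and the Maxwell stress tensor is unnecessary. The adjoint structure you need is already present: the operator in the BIE \eqref{eq:3.3a} is $\mathcal{A}^*=I-\frac{\kappa_0-\kappa}{\kappa_0}\text{DtN}\,\mathcal{V}$, and its adjoint is $\mathcal{A}=I-\mathcal{V}\,\frac{\kappa_0-\kappa}{\kappa_0}\text{DtN}$, which is precisely the operator in the \emph{potential} formulation $\mathcal{A}\lambda=\frac{4\pi}{\kappa_0}\mathcal{V}\sigma_f$ with $\lambda=\mathcal{V}\nu$. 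The paper exploits exactly this: differentiating $\mathcal{E}^{\ell_{\max}}_{\sigma_f}=\frac{1}{2}\,4\pi\,\boldsymbol{\sigma_f}\cdot\boldsymbol{V}\boldsymbol{\nu_{\ell_{\max}}}=\frac{1}{2}\,4\pi\,\boldsymbol{\sigma_f}\cdot\boldsymbol{\lambda_{\ell_{\max}}}$ and using that only $\boldsymbol{V}$ carries explicit dependence on the centres (the identity and $\boldsymbol{\rm DtN}^{\kappa}$ do not), one finds $\boldsymbol{A}\,\partial_{\bold{x}_i^\alpha}\boldsymbol{\lambda_{\ell_{\max}}}=(\partial_{\bold{x}_i^\alpha}\boldsymbol{V})\boldsymbol{\nu_{\ell_{\max}}}$. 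Hence $\partial_{\bold{x}_i^\alpha}\mathcal{E}^{\ell_{\max}}_{\sigma_f}=\frac{1}{2}\,4\pi\,(\boldsymbol{A}^{\rm T})^{-1}\boldsymbol{\sigma_f}\cdot(\partial_{\bold{x}_i^\alpha}\boldsymbol{V})\boldsymbol{\nu_{\ell_{\max}}}$, and the crucial observation is that the Galerkin equation \eqref{eq:Galerkina} \emph{is} the statement $(\boldsymbol{A}^{\rm T})^{-1}\boldsymbol{\sigma_f}=\frac{\kappa_0}{4\pi}\boldsymbol{\nu_{\ell_{\max}}}$. This gives the symmetric form $(\widetilde{\boldsymbol{F}}^{\ell_{\max}}_i)_\alpha=-\frac{1}{2}\kappa_0\,\boldsymbol{\nu_{\ell_{\max}}}\cdot(\partial_{\bold{x}_i^\alpha}\boldsymbol{V})\boldsymbol{\nu_{\ell_{\max}}}$ in one stroke. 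The remaining step is to compute $\partial_{\bold{x}_i^\alpha}\boldsymbol{V}$ explicitly from the kernel $\frac{1}{4\pi|\bold{x}-\bold{y}|}$: the diagonal $(i,i)$ block vanishes (your self-interaction/Newton's third law point), and the off-diagonal $(i,j)$ and $(j,i)$ blocks for $j\neq i$ give two equal copies of $\int_{\partial\Omega_i}\nu_{\ell_{\max}}\,(\boldsymbol{E}^{\ell_{\max}}_i)_\alpha$, absorbing the $\frac{1}{2}$.

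So your discrete-case plan in obstacle (b) --- differentiate directly and eliminate $\dot\nu_{\ell_{\max}}$ via the equation and its transpose --- is exactly right and is what the paper does; you just did not recognise that the same elementary adjoint trick works in the continuous case as well, and chose instead the heavier Maxwell-stress route with its attendant difficulty (your obstacle (a)) of reconciling the jump of $|\nabla\phi|^2$ across $\partial\Omega_i$ with the Lorentz integral. That route can be made to work, but it is considerably less direct than the paper's two-line adjoint computation followed by explicit kernel differentiation.
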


{	We will prove Theorem \ref{thm:equivalence} for the approximate forces. The proof for the exact forces is similar in spirit with an additional complication due to the fact that the exact induced surface charge $\nu \in H^{-\frac{1}{2}}(\partial \Omega)$ is a distribution. Consequently, extra care must be taken when performing direct calculations involving the explicit, integral representation of the single layer boundary operator $\mathcal{V}\colon H^{-\frac{1}{2}}(\partial \Omega)\rightarrow H^{\frac{1}{2}}(\partial \Omega)$.

	To facilitate the proof of Theorem \ref{thm:equivalence} in the case of the approximate forces, it is advantageous to represent elements of the approximation space $W^{\ell_{\max}}$ as vectors in Euclidean space. This requires the introduction of a basis on $W^{\ell_{\max}}$, and in view of Definition \ref{def:6.7}, the natural choice of basis functions are the local spherical harmonics on each sphere. 
	
	\begin{definition}[Choice of Basis]\label{def:Basis}~
		Let $\ell_{\max} \in \mathbb{N}$. For each $j \in \{1, \ldots, N\}$ and all $\ell \in \{0, \ldots, \ell_{\max}\}$, $-\ell \leq m \leq \ell$ we define the function ${\mathcal{Y}}^j_{\ell m} \colon \partial \Omega \rightarrow \mathbb{R}$ as
		\begin{align*}
		{\mathcal{Y}}^j_{\ell m}(\bold{x}):= \begin{cases}\mathcal{Y}_{\ell}^{m} \left(\frac{\bold{x}-\bold{x}_j}{\vert \bold{x}-\bold{x}_j\vert}\right) \quad &\text{for all } \bold{x} \in \partial \Omega_j,\\
		0 \quad &\text{otherwise},
		\end{cases}
		\end{align*}
		and we equip the approximation space $W_{\ell_{\max}}$ with the basis $\{\mathcal{Y}^j_{\ell m}\}$.
	\end{definition}
	
	\noindent 	\textbf{Notation:} Let $\ell_{\max} \in \mathbb{N}$. We will henceforth denote by $M:= N \cdot (\ell_{\max}+1)^2$, the dimension of the approximation space $W^{\ell_{\max}}$. 
	
	\begin{remark}\label{rem:vectors}
		Consider Definition \ref{def:Basis} of the basis functions on $W^{\ell_{\max}}$. These functions establish an isomorphism between $W^{\ell_{\max}}$ and $\mathbb{R}^{M}$. Indeed, we associate an arbitrary $\psi \in W^{\ell_{\max}}$ with $\boldsymbol{\psi} \in \mathbb{R}^{M}$ defined as
		\begin{align*}
		[\boldsymbol{\psi}_{i}]_{\ell}^m:= \left(\psi, \mathcal{Y}^i_{\ell m}\right)_{L^2(\partial \Omega_i)}, \text{ for } i\in \{1, \ldots, N\},~ \ell \in \{0, \ldots, \ell_{\max}\} ~\text{and } -\ell \leq m \leq \ell.
		\end{align*}
		Consequently, given functions in the space $W^{\ell_{\max}}$, we will often refer to their vector representations in $\mathbb{R}^{M}$ and vice versa. Moreover, to facilitate identification we will frequently use bold symbols for the vector representations.
	\end{remark}

	In order to present a concise proof of Theorem \ref{thm:equivalence}, it is useful to define the boundary integral operators associated with the BIE \eqref{eq:3.3a} and Galerkin discretisation \eqref{eq:Galerkina}.
	\begin{definition}\label{def:A}
		We define the linear operator $\mathcal{A} \colon {H}^{\frac{1}{2}}(\partial \Omega) \rightarrow {H}^{\frac{1}{2}}(\partial \Omega)$ as the mapping with the property that for all $\lambda \in {H}^{\frac{1}{2}}(\partial \Omega)$ it holds that
		\begin{align*}
		\mathcal{A} \lambda:= \lambda - \mathcal{V} \left(\frac{\kappa_0-\kappa}{\kappa_0} \text{\rm DtN}\lambda\right).
		\end{align*}
		
		In addition, we define $\mathcal{A}^* \colon {H}^{-\frac{1}{2}}(\partial \Omega) \rightarrow {H}^{-\frac{1}{2}}(\partial \Omega)$ as the adjoint operator of $\mathcal{A}$.
	\end{definition}

	\begin{definition}\label{def:Matrices}
		Let $\ell_{\max} \in \mathbb{N}_0$, let $\sigma_f \in H^{-\frac{1}{2}}(\partial \Omega)$ be a given free charge, let $\mathcal{V} \colon H^{-\frac{1}{2}}(\partial \Omega) \rightarrow H^{\frac{1}{2}}(\partial \Omega)$ and $\textbf{\rm DtN}  \colon H^{\frac{1}{2}}(\partial \Omega) \rightarrow H^{-\frac{1}{2}}(\partial \Omega) $ denote the single layer boundary operator and Dirichlet-to-Neumann map respectively, let $\mathcal{A}\colon H^{\frac{1}{2}}(\partial \Omega) \rightarrow H^{\frac{1}{2}}(\partial \Omega)$ denote the boundary integral operator defined through Definition \ref{def:A}, and let $\mathbb{Q}_{\ell_{\max}} \colon H^{-\frac{1}{2}}(\partial \Omega) \rightarrow W^{\ell_{\max}}$ and $\mathbb{P}_{\ell_{\max}} \colon H^{\frac{1}{2}}(\partial \Omega) \rightarrow W^{\ell_{\max}}$ denote the projection operators defined through Definition \ref{def:PQ}. Then 
		\begin{itemize}
			\item We define the vector $\boldsymbol{\sigma_f} \in \mathbb{R}^{M}$ as
			\begin{align*}
			[\boldsymbol{\sigma_f}_{i}]_{\ell}^m:= \left(\mathbb{Q}_{\ell_{\max}}\sigma_f, \mathcal{Y}^i_{\ell m}\right)_{L^2(\partial \Omega_i)},
			\end{align*}
			where  $i\in \{1, \ldots, N\}, ~ \ell \in \{0, \ldots, \ell_{\max}\} ~ \text{and } ~|m| \leq \ell$. \vspace{5mm}
			
			\item We define the diagonal matrix $\boldsymbol{\rm DtN}^{\kappa} \in \mathbb{R}^{M\times M}$ as 
			\begin{align*}
			[\boldsymbol{\rm DtN}^{\kappa}_{ij}]_{\ell \ell'}^{m m'}:= \delta_{ij}\left( \frac{\kappa_j-\kappa_0}{\kappa_0}\text{\rm DtN}\mathcal{Y}^j_{\ell'm'}, \mathcal{Y}^i_{\ell m}\right)_{L^2(\partial \Omega_i)},
			\end{align*}
			where  $ i, j\in \{1, \ldots, N\}, ~ \ell, \ell' \in \{0, \ldots, \ell_{\max}\} ~ \text{ and } ~|m| \leq \ell, ~ |m'| \leq \ell'$. \vspace{5mm}
			\item We define the symmetric, positive definite matrix $\boldsymbol{V} \in \mathbb{R}^{M\times M}$ as
			\begin{align*}
			[\boldsymbol{V}_{ij}]_{\ell \ell'}^{m m'}:= \left({\mathcal{V}}\mathcal{Y}^j_{\ell'm'}, \mathcal{Y}^i_{\ell m}\right)_{L^2(\partial \Omega_i)},
			\end{align*}
			where $i, j\in \{1, \ldots, N\}, ~ \ell, \ell' \in \{0, \ldots, \ell_{\max}\} ~ \text{ and } ~|m| \leq \ell, ~ |m'| \leq \ell'$.\vspace{5mm}
			
			\item We define the solution matrix $\boldsymbol{A} \in \mathbb{R}^{M\times M}$ as
			\begin{align*}
			[\boldsymbol{A}_{ij}]_{\ell \ell'}^{m m'}:= \left({\mathcal{A}}\mathcal{Y}^j_{\ell'm'}, \mathcal{Y}^i_{\ell m}\right)_{L^2(\partial \Omega_i)},
			\end{align*}
			where $i, j\in \{1, \ldots, N\}, ~ \ell, \ell' \in \{0, \ldots, \ell_{\max}\} ~ \text{ and } ~|m| \leq \ell, ~ |m'| \leq \ell'$.
		\end{itemize}
	\end{definition}
	
	Equipped with the matrix representations of the relevant boundary integrals, we are now ready to state the proof of Theorem \ref{thm:equivalence} for the approximate forces. 
	
	\begin{proof}[Proof of Theorem \ref{thm:equivalence}:]
		
		We assume the setting of Remark \ref{rem:vectors} and Definition~\ref{def:Matrices} and we denote by $\boldsymbol{\nu_{\ell_{\max}}} \in \mathbb{R}^M$ the vector representation of the solution $\nu_{\ell_{\max}} \in W^{\ell_{\max}}$ to the Galerkin discretisation \eqref{eq:Galerkina}. We divide the proof into two steps:
		\begin{itemize}
			\item We first show that for each $i \in \{1, \ldots, N\}$ and $\alpha =1,2,3$ it holds that \[\left(\widetilde{\boldsymbol{F}}^{\ell_{\max}}_i\right)_{\alpha}=-\frac{1}{2}\kappa_0\, \boldsymbol{\nu_{\ell_{\max}}} \cdot \left(\partial_{\bold{x}_i^{\alpha}}\boldsymbol{V}\right) \boldsymbol{\nu_{\ell_{\max}}},\] 
			where $\left(\widetilde{\boldsymbol{F}}^{\ell_{\max}}_i\right)_{\alpha}$ denotes the $\alpha^{\rm th}$ component of the approximate force $\widetilde{\boldsymbol{F}}^{\ell_{\max}}_i$ and $\partial_{\bold{x}_i^{\alpha}}$ denotes the $\alpha^{\rm th}$ component of the sphere-centred gradient $\nabla_{\bold{x}_i}$.
			\item In the second step we use this expression to show that $\widetilde{\boldsymbol{F}}^{\ell_{\max}}_i=\boldsymbol{F}^{\ell_{\max}}_i$.
		\end{itemize}
		\vspace{5mm}
		\textbf{Step 1:} Consider Definition \ref{def:Energy_approx} of the discrete electrostatic energy. A direct calculation shows that
		\begin{align*}
		\mathcal{E}_{\sigma_f}^{\ell_{\max}} = \frac{1}{2} 4 \pi \,\boldsymbol{\sigma_f} \cdot \boldsymbol{V} \boldsymbol{\nu_{\ell_{\max}}}.
		\end{align*}

		Let $i \in \{1, \ldots, N\}$ and $\alpha \in \{1, 2,3\}$ be fixed. Using Definition \ref{def:Force1_approx} of the approximate force and the fact that the vector $\boldsymbol{\sigma_f}$ is independent of the sphere centre locations $\left\{\bold{x}_i\right\}_{i=1}^N$ (see Definition \ref{def:Matrices}), we see that
		\begin{align}\label{eq:dual01}
		\left(\widetilde{\boldsymbol{F}}^{\ell_{\max}}_i \right)_{\alpha}=-\frac{1}{2} 4 \pi \,\boldsymbol{\sigma_f} \cdot \partial_{\bold{x}_i^{\alpha}}\boldsymbol{V} \boldsymbol{\nu_{\ell_{\max}}}=-\frac{1}{2} 4 \pi \,\boldsymbol{\sigma_f} \cdot \partial_{\bold{x}_i^{\alpha}}\boldsymbol{\lambda_{\ell_{\max}}},
		\end{align}
		where $\boldsymbol{\lambda_{\ell_{\max}}}:= \boldsymbol{V}\boldsymbol{\nu_{\ell_{\max}}} \in \mathbb{R}^{M}$ is the vector representation of $\lambda_{\ell_{\max}}:=\mathbb{P}_{\ell_{\max}}\mathcal{V}\nu_{\ell_{\max}} \in W^{\ell_{\max}}$, i.e., the so-called approximate surface electrostatic potential. Consequently, it suffices to compute the sphere-centred partial derivatives of $\boldsymbol{\lambda_{\ell_{\max}}}$. Using the Galerkin discretisation~\eqref{eq:Galerkina}, one can show (see~\cite{Hassan1}) that the vector $\boldsymbol{\lambda_{\ell_{\max}}}$ solves the finite-dimensional BIE
		\begin{align}\label{eq:dual11}
		\boldsymbol{A}\boldsymbol{\lambda_{\ell_{\max}}}= \frac{4\pi}{\kappa_0}\boldsymbol{V}\boldsymbol{\sigma_f}.
		\end{align}
		
		Consequently, taking the derivative on both sides of Equation \eqref{eq:dual11} and using the chain rule yields
		\begin{align*}
		\boldsymbol{A} \big( \partial_{\bold{x}_i^{\alpha}}\boldsymbol{\lambda_{\ell_{\max}}}\big)=  \frac{4\pi}{\kappa_0}\big( \partial_{\bold{x}_i^{\alpha}}\boldsymbol{V}\big)\boldsymbol{\sigma_f}-\big(\partial_{\bold{x}_i^{\alpha}}\boldsymbol{A}\big) \boldsymbol{\lambda_{\ell_{\max}}}.
		\end{align*}
		
		Next, using Definition \ref{def:A} of the boundary integral operator $\mathcal{A}$ we write the solution matrix $\boldsymbol{A}$ as
		\begin{align*}
		\boldsymbol{A}= \boldsymbol{I_{\ell_{\max}}} + \boldsymbol{V}\boldsymbol{\rm DtN}^{\kappa},
		\end{align*}
		where $\boldsymbol{I_{\ell_{\max}}}  \in \mathbb{R}^{M \times M}$ is the identity matrix. We now observe that both $\boldsymbol{I_{\ell_{\max}}}$ and the matrix $\boldsymbol{\rm DtN}^{\kappa}$ (see Definition \ref{def:Matrices}) are also independent of the sphere centre locations $\left\{\bold{x}_i\right\}_{i=1}^N$. Consequently, it holds that
		\begin{align*}
		-\partial_{\bold{x}_i^{\alpha}}\boldsymbol{A}= -\big(\partial_{\bold{x}_i^{\alpha}}\boldsymbol{V}\big)\boldsymbol{\rm DtN}^{\kappa}.
		\end{align*}
		
		A simple calculation then yields
		\begin{align}\label{eq:dual21}
		\boldsymbol{A} \big( \partial_{\bold{x}_i^{\alpha}}\boldsymbol{\lambda_{\ell_{\max}}}\big)= \big( \partial_{\bold{x}_i^{\alpha}}\boldsymbol{V}\big)\frac{4\pi}{\kappa_0}\boldsymbol{\sigma_f}-\big(\partial_{\bold{x}_i^{\alpha}}\boldsymbol{V}\big)\boldsymbol{\rm DtN}^{\kappa}\boldsymbol{\lambda_{\ell_{\max}}}=\big( \partial_{\bold{x}_i^{\alpha}}\boldsymbol{V}\big) \boldsymbol{\nu_{\ell_{\max}}},
		\end{align}
		where the last equality follows from the fact that $\boldsymbol{\nu_{\ell_{\max}}}=\frac{4\pi}{\kappa_0}\boldsymbol{\sigma_f}-\boldsymbol{\rm DtN}^{\kappa}\boldsymbol{\lambda_{\ell_{\max}}}$, which can be deduced directly from the Galerkin discretisation~\eqref{eq:Galerkina}.
		
		Next, let $\boldsymbol{A}^{\rm T} \in \mathbb{R}^{M \times M}$ denote the transpose of $\boldsymbol{A}$. Clearly, $\boldsymbol{A}^{\rm T} $ is the matrix representation (with respect to the basis \ref{def:Basis}) of the finite-dimensional operator $\mathbb{Q}_{\ell_{\max}}\mathcal{A}^* \mathbb{Q}_{\ell_{\max}} \colon W^{\ell_{\max}} \rightarrow W^{\ell_{\max}}$, i.e., the operator associated with the Galerkin discretisation~\eqref{eq:Galerkina}. Since the Galerkin discretisation~\eqref{eq:Galerkina} is well-posed, the matrices $\boldsymbol{A}^{\rm T} $ and $\boldsymbol{A}$ are both invertible. Consequently, we can use Equations \eqref{eq:dual01} and \eqref{eq:dual21} to write the approximate electrostatic force~$\widetilde{\boldsymbol{F}}^{\ell_{\max}}_i$ as
		\begin{align*}
		\left(\widetilde{\boldsymbol{F}}^{\ell_{\max}}_i \right)_{\alpha}=&-\frac{1}{2} 4 \pi \,\boldsymbol{\sigma_f} \cdot \partial_{\bold{x}_i^{\alpha}}\boldsymbol{\lambda_{\ell_{\max}}}=-\frac{1}{2} 4 \pi \,\boldsymbol{\sigma_f} \cdot \left(\boldsymbol{A}^{-1}\big( \partial_{\bold{x}_i^{\alpha}}\boldsymbol{V}\big) \boldsymbol{\nu_{\ell_{\max}}}\right)\\
		&=-\frac{1}{2} 4 \pi \,\big(\boldsymbol{A}^{\rm T}\big)^{-1}\boldsymbol{\sigma_f} \cdot \big( \partial_{\bold{x}_i^{\alpha}}\boldsymbol{V}\big) \boldsymbol{\nu_{\ell_{\max}}}
		\end{align*}
		
		Finally, in view of the Galerkin discretisation (2.4) we obtain that $\big(\boldsymbol{A}^{\rm T}\big)^{-1}\boldsymbol{\sigma_f}$ $= \frac{\kappa_0}{4\pi} \boldsymbol{\nu_{\ell_{\max}}}$ so that
		\begin{align}\label{eq:dual12}
		\left(\widetilde{\boldsymbol{F}}^{\ell_{\max}}_i \right)_{\alpha}=-\frac{1}{2} \kappa_0 \, \boldsymbol{\nu_{\ell_{\max}}}\cdot \big( \partial_{\bold{x}_i^{\alpha}}\boldsymbol{V}\big) \boldsymbol{\nu_{\ell_{\max}}}.
		\end{align}
		
		\textbf{Step 2:} We will now attempt to simplify the expression \eqref{eq:dual12} for the approximate electrostatic forces. To this end, let $\mathbb{S}^2\subset \mathbb{R}^3$ denote the unit sphere and for each $i \in \{1, \ldots, N\}$ let $\nu^{\ell_{\max}}_i:= \nu_{\ell_{\max}}\vert_{\partial \Omega_i}$. Using Definition \ref{def:Matrices} of the matrix $\boldsymbol{V}$ and a simple change of variables, Equation \eqref{eq:dual12} can be written in the form
		\begin{multline*}
		\widetilde{\boldsymbol{F}}^{\ell_{\max}}_i=-\frac{1}{2} \kappa_0\sum_{k=1}^N\sum_{j=1}^N  r_k^2r_j^2\int_{\mathbb{S}^2}\int_{\mathbb{S}^2} \nu^{\ell_{\max}}_k\big(\bold{x}_k+r_k\bold{t}\big) \nu^{\ell_{\max}}_j\big(\bold{x}_j+r_j\bold{s}\big)\\
		\cdot\bigg(\nabla_{\bold{x}_i}\frac{1}{\vert \bold{x}_j+r_j\bold{s}-(\bold{x}_k+r_k\bold{t})\vert}\bigg)\, d\bold{s}d\bold{t}.
		\end{multline*}
		
		A straightforward calculation shows that the only non-zero terms in this double sum involve $j\neq i, k=i$ and $j=i, k\neq i$. Consequently, we can write
		\begin{align*}
		\widetilde{\boldsymbol{F}}^{\ell_{\max}}_i=&-\frac{1}{2} \kappa_0r_i^2\sum_{\substack{k=1\\ k \neq i}}^N r_k^2\int_{\mathbb{S}^2}\int_{\mathbb{S}^2} \nu^{\ell_{\max}}_k\big(\bold{x}_k+r_k\bold{t}\big) \nu^{\ell_{\max}}_i\big(\bold{x}_i+r_i\bold{s}\big) \hphantom{\vert \bold{x}_i+r_i\bold{s}-(\bold{x}_k+r_k\bold{t})\vert}\\
		&\hphantom{\sum_{\substack{k=1\\ k \neq i}}^N \int_{\mathbb{S}^2}\int_{\mathbb{S}^2} \nu^{\ell_{\max}}_k\big(\bold{x}_k+r_k\bold{t}\big)\nu^{\ell_{\max}}_k\big(r_k\bold{t}\big)}\cdot\bigg(\nabla_{\bold{x}_i}\frac{1}{\vert \bold{x}_i+r_i\bold{s}-(\bold{x}_k+r_k\bold{t})\vert}\bigg)\, d\bold{s}d\bold{t}\\
		&-\frac{1}{2} \kappa_0r_i^2\sum_{\substack{j=1\\ j \neq i}}^N r_j^2\int_{\mathbb{S}^2}\int_{\mathbb{S}^2} \nu^{\ell_{\max}}_i\big(\bold{x}_i+r_i\bold{t}\big)\nu^{\ell_{\max}}_j\big(\bold{x}_j+r_j\bold{s}\big)\\&\hphantom{\sum_{\substack{k=1\\ k \neq i}}^N \int_{\mathbb{S}^2}\int_{\mathbb{S}^2} \nu^{\ell_{\max}}_k\big(\bold{x}_k+r_k\bold{t}\big)\nu^{\ell_{\max}}_k\big(r_k\bold{t}\big)}\cdot\bigg(\nabla_{\bold{x}_i}\frac{1}{\vert \bold{x}_j+r_j\bold{s}-(\bold{x}_i+r_i\bold{t})\vert}\bigg)\, d\bold{s}d\bold{t}.
		\end{align*}
		
		We can now use simple calculus and the symmetries in the above sum to obtain
		\begin{multline*}
		\widetilde{\boldsymbol{F}}^{\ell_{\max}}_i = -\kappa_0r_i^2\int_{\mathbb{S}^2} \nu^{\ell_{\max}}_i\big(\bold{x}_i+r_i\bold{t}\big)\sum_{\substack{j=1\\ j\neq i}}^N r_j^2\int_{\mathbb{S}^2} \nu^{\ell_{\max}}_j\big(\bold{x}_j+r_j\bold{s}\big) \bigg(\frac{\bold{x}_i+r_i\bold{s}-(\bold{x}_j+r_j\bold{t})}{\vert \bold{x}_j+r_j\bold{s}-(\bold{x}_k+r_k\bold{t})\vert^3}\bigg)\, d\bold{s} d\bold{t},
		\end{multline*}
		and therefore,
		\begin{align*}
		\widetilde{\boldsymbol{F}}^{\ell_{\max}}_i &= -\kappa_0\int_{\partial \Omega_i} \nu^{\ell_{\max}}_i(\bold{y})\sum_{\substack{j=1\\ j\neq i}}^N \int_{\partial \Omega_j} \nu^{\ell_{\max}}_j(\bold{x})\frac{\bold{y}-\bold{x}}{\vert \bold{y}-\bold{x}\vert^3}\, d\bold{x} d\bold{y}\\[0.4em]
		&= \kappa_0\int_{\partial \Omega_i} \nu^{\ell_{\max}}_i(\bold{y})\boldsymbol{E}_i(\bold{y})\, d\bold{y}= \boldsymbol{F}^{\ell_{\max}}_i.
		\end{align*}
	\end{proof}
	
	We conclude this discussion by observing that due to Theorem \ref{thm:equivalence}, all remarks concerning the electrostatic energy-based definition of the forces are equally applicable to the electric field-based definition of the forces. In particular, we can view the approximate forces $\{\boldsymbol{F}_i^{\ell_{\max}}\}_{i=1}^N$ as the gradient of the discrete potential energy surface (dPES) at the point $\{\bold{x}_i\}_{i=1}^N$.}
\end{document}